\DeclareMathOperator{\domain}{\mathsf{domain}}
\DeclareMathOperator{\nodes}{\hspace{0.4pt}\mathsf{nodes}\hspace{-0.8pt}}
\DeclareMathOperator{\maxel}{\hspace{0.4pt}\mathsf{max}\hspace{-0.4pt}}
\DeclareMathOperator{\minel}{\hspace{0.4pt}\mathsf{min}\hspace{-0.6pt}}
\DeclareMathOperator{\height}{\hspace{0.4pt}\mathsf{height}}
\DeclareMathOperator{\level}{\hspace{0.4pt}\mathsf{level}}
\DeclareMathOperator{\branches}{\hspace{0.4pt}\mathsf{branches}\hspace{-0.8pt}}
\DeclareMathOperator{\sons}{\hspace{0.4pt}\mathsf{sons}}
\DeclareMathOperator{\roott}{\hspace{0.4pt}\mathsf{root}}
\DeclareMathOperator{\impl}{\hspace{0.4pt}\mathsf{implant}\hspace{-0.8pt}}
\DeclareMathOperator{\expl}{\hspace{0.4pt}\mathsf{explant}\hspace{0.5pt}}
\DeclareMathOperator{\supp}{\hspace{0.4pt}\mathsf{support}\hspace{0.5pt}}
\DeclareMathOperator{\hybr}{\hspace{0.4pt}\mathsf{hybrid}\hspace{0.5pt}}
\DeclareMathOperator{\skeleton}{\hspace{0.4pt}\mathsf{skeleton}\hspace{-0.8pt}}
\DeclareMathOperator{\flesh}{\hspace{0.4pt}\mathsf{flesh}}
\DeclareMathOperator{\fruit}{\hspace{0.4pt}\mathsf{fruit}\hspace{-0.8pt}}
\DeclareMathOperator{\yield}{\hspace{0.4pt}\mathsf{yield}\hspace{-0.8pt}}
\DeclareMathOperator{\shoot}{\hspace{0.4pt}\mathsf{shoot}\hspace{0.5pt}}
\DeclareMathOperator{\scope}{\hspace{0.4pt}\mathsf{scope}\hspace{0.5pt}}
\DeclareMathOperator{\cut}{\hspace{0.4pt}\mathsf{cut}\hspace{0.5pt}}
\DeclareMathOperator{\loss}{\hspace{0.4pt}\mathsf{loss}\hspace{0.5pt}}
\DeclareMathOperator{\fhybr}{\hspace{0.4pt}\mathsf{fol\hspace{-0.7pt}.\hspace{-2pt}hybr}\hspace{0.5pt}}
\DeclareMathOperator{\length}{\hspace{0.4pt}\mathsf{length}\hspace{-0.8pt}}
\DeclareMathOperator{\nbhds}{\hspace{0.4pt}\mathsf{nbhds}\hspace{0.5pt}}
\author{Mikhail Patrakeev\footnote{Ural Federal University, 620002, 19 Mira street, Yekaterinburg, Russia and Krasovskii Institute of Mathematics and Mechanics of UB RAS, 620990, 16 Sofia Kovalevskaya street, Yekaterinburg, Russia; patrakeev@mail.ru}
}
\title{The complement of a $\hspace{1pt}\mathsurround=0pt\sigma$-compact subset
of a space with a $\hspace{1pt}\mathsurround=0pt\pi$-tree also has a $\hspace{1pt}\mathsurround=0pt\pi$-tree\footnote{2010 Mathematics Subject Classification\textup{:} Primary 54{E}99; Secondary 54{H}05. Keywords\textup{:} the Sorgenfrey line, the Baire space, Souslin scheme, Lusin scheme, Lusin pi-base, pi-tree, foliage tree, the foliage hybrid operation, sigma-compact.}}
\date{}
\begin{document}
\hyphenation{pa-ra-com-pact no-n-in-cre-a-s-ing tran-si-ti-ve re-bu-ild-ing sche-me dis-jo-int-ness}
\renewcommand{\proofname}{\textup{\textbf{Proof}}}
\renewcommand{\abstractname}{\textup{Abstract}}
\renewcommand{\refname}{\textup{References}}
\maketitle
\begin{abstract}
We prove that the complement of a $\hspace{1pt}\mathsurround=0pt\sigma$-compact subset of a topological space that has a $\hspace{1pt}\mathsurround=0pt\pi$-tree also has a $\hspace{1pt}\mathsurround=0pt\pi$-tree. To do this, we construct the \emph{foliage hybrid operation}, which deals with \emph{foliage trees} (that is, set-theoretic trees with a `leaf' at each node). Then using this operation we modify a $\hspace{1pt}\mathsurround=0pt\pi$-tree of a space and get a $\hspace{1pt}\mathsurround=0pt\pi$-tree for its subspace.
\end{abstract}

\section{Introduction}
\label{section0}

We study topological spaces that have a $\hspace{1pt}\mathsurround=0pt\pi$-tree; this notion is equivalent to the notion of a Lusin $\hspace{1pt}\mathsurround=0pt\pi$-base, which was introduced in~\cite{MPatr} (see details in Definition~\ref{def.B.f.tree} and Remark~\ref{l.eqiv.lusin}).
The Sorgenfrey line and the Baire space $\mathcal{N}$ (that is, ${}^\omega\hspace{-1pt}\omega$ with the product topology) are examples of spaces with a $\hspace{1pt}\mathsurround=0pt\pi$-tree~\cite{MPatr}.
Every space that has a $\hspace{1pt}\mathsurround=0pt\pi$-tree shares many good properties with the Baire space. One reason for this is expressed in Lemma~\ref{lem.pi.and.B.f.trees.vs.S}, another two are the following:
if a space ${X}\hspace{-1pt}$ has a $\hspace{1pt}\mathsurround=0pt\pi$-tree, then ${X}\hspace{-1pt}$ can be mapped onto $\mathcal{N}$ by a continuous one-to-one map~\cite{MPatr} and also ${X}\hspace{-1pt}$ can be mapped onto $\mathcal{N}$ by a continuous open map~\cite{MPatr} (hence ${X}\hspace{-1pt}$ can be mapped  by a continuous open map onto an arbitrary Polish space).

In this paper we prove  Theorem~\ref{corollary.2}, which states that if a space ${X}\hspace{-1pt}$ has a $\hspace{1pt}\mathsurround=0pt\pi$-tree and ${Y}\subseteq{X}\hspace{-1pt}$ is the complement of a $\hspace{1pt}\mathsurround=0pt\sigma$-compact subset of ${X},$ then ${Y}\hspace{-1.7pt}$ also has a $\hspace{1pt}\mathsurround=0pt\pi$-tree.
This result reflects the following property of the Baire space: if ${Y}\hspace{-0.5pt}\subseteq\mathcal{N}$ is the complement of a $\hspace{1pt}\mathsurround=0pt\sigma$-compact subset of $\mathcal{N},$ then ${Y}\hspace{-1.5pt}$ is homeomorphic to $\mathcal{N}$ (this property of $\mathcal{N}$ can be easily derived from the Alexandrov-Urysohn characterization of the Baire space and from the characterization of its Polish subspaces --- see Theo\-rems~3.11 and~7.7 in~\cite{kech}).

Theorem~\ref{corollary.2} is a corollary to Theorem~\ref{corollary.1}, which in combination with Lemma~\ref{lem.pi.and.B.f.trees.vs.S} allows to find many more subspaces ${Y}\hspace{-1pt}$ of a space ${X}\hspace{-1pt}$ with a $\hspace{1pt}\mathsurround=0pt\pi$-tree such that ${Y}\hspace{-1pt}$ also has a $\hspace{1pt}\mathsurround=0pt\pi$-tree; for example, a dense ${Y}\hspace{-1pt}$ such that $|{X}\!\setminus\!{Y}|={2}^{\aleph_{0}}$ (Theorem~\ref{corollary.2} does not allow to find such  ${Y}\hspace{-1pt}$ in the Sorgenfrey line because every $\hspace{1pt}\mathsurround=0pt\sigma$-compact subset of the Sorgenfrey line is at most countable). In contrast to Theorems~\ref{corollary.1} and~\ref{corollary.2}, a dense open subspace ${Y}\hspace{-1pt}$ of a space ${X}\hspace{-1pt}$ that has a $\hspace{1pt}\mathsurround=0pt\pi$-tree can be without $\hspace{1pt}\mathsurround=0pt\pi$-tree even if ${X}\hspace{-1pt}$ is separable metrizable (this result is in preparation for publication).
Both Theorems~\ref{corollary.1} and~\ref{corollary.2} are corollaries to Theorem~\ref{main.theorem}, which is the main technical result of this paper.

\section{Notation and terminology}

We use standard set-theoretic notation from \cite{kun,jech}, according to which $\hspace{1pt}\mathsurround=0pt\omega=$ the set of natural numbers $\mathsurround=0pt =$ the set of finite ordinals $\mathsurround=0pt =$ the first limit ordinal $\mathsurround=0pt=$ the first infinite cardinal $\mathsurround=0pt =\aleph_{0},$ and each ordinal is equal to the set of smaller ordinals, so that ${n}=\{0,\ldots,{n}\,{-}\,1\}$ for all ${n}\in\omega.$
We use terminology from~\cite{top.enc} when we work with (topological) spaces.
Also we use several less common notations:

\label{section1}
\begin{notation}\label{not01}%
   The symbol $\coloneq$ means ``equals by definition''\textup{;}
   the symbol ${\colon}{\longleftrightarrow}$ is used to show that an expression on the left side is the abbreviation for expression on the right side\textup{;}

   \begin{itemize}
   \item [\ding{46}\ ]
      $\mathsurround=0pt
      {x}\subset{y}
      \quad{\colon}{\longleftrightarrow}\quad
      {x}\subseteq{y}\ \enskip\mathsf{and}\enskip\ {x}\neq{y};$
   \item [\ding{46}\ ]
      $\mathsurround=0pt
      \forall{v}\,{\neq}\,{w}\,{\in}\,{A}\ \varphi({v},{w})
      \quad{\colon}{\longleftrightarrow}\quad
      \forall{v},{w}\,{\in}\,\! {A}\;\big[\,{v}\neq {w}\,\to\,\varphi({v},{w})\,\big];$
   \item [\ding{46}\ ]
      $\mathsurround=0pt
      \exists\,{v}\,{\neq}\,{w}\,{\in}\,{A}\ \varphi({v},{w})
      \quad{\colon}{\longleftrightarrow}\quad
      \exists\,{v},{w}\in\! {A}\;\big[\,{v}\neq {w}\,\ \mathsf{and}\ \,\varphi({v},{w})\,\big];$
   \item [\ding{46}\ ]
      $\mathsurround=0pt
      {A}\,\equiv\,\bigsqcup_{{\lambda}\in{\Lambda}}{B}_{\lambda}
      \quad{\colon}{\longleftrightarrow}\quad
      {A}=\bigcup_{{\lambda}\in{\Lambda}}{B}_{\lambda}\enskip\mathsf{and}\enskip
      \forall{\lambda}\,{\neq}\,{\lambda}'\,{\in}\,{\Lambda}
      \;[\,{B}_{\lambda}\cap{B}_{{\lambda}'}=\varnothing\,];$
   \item [\ding{46}\ ]
      $\mathsurround=0pt
      {A}\equiv {B}_{0}\sqcup\ldots\sqcup {B}_{n}
      \quad{\colon}{\longleftrightarrow}\quad
      {A}\equiv\bigsqcup_{\,{i}\in\{{0},\ldots,{n}\}}{B}_{i}\,.$
   \end{itemize}
\end{notation}

When we work with sequences, we use the following notation:

\begin{notation}\label{not02}%
   Suppose that $\alpha,\beta$ are ordinals, ${n}\in\omega,$ and ${s},{t}$ are transfinite sequences (that is, ${s}$ and ${t}$ are functions whose domains are ordinals). Then\textup{:}

   \begin{itemize}
   \item [\ding{46}\ ]
      $\mathsurround=0pt
      \length\hspace{0.5pt}{s}\coloneq$ the domain of ${s};$
   \item [\ding{46}\ ]
      $\mathsurround=0pt
      \langle\hspace{1pt}{r}_0,\ldots,{r}_{{n}-1}\rangle\coloneq$
      the sequence ${r}$ such that  $\length{r}={n}$ and ${r}(i)={r}_{i}$ for all ${i}<{n};$

      in particular, $\langle\rangle\coloneq\!$ the empty sequence ($\mathsurround=0pt=$ the empty set);
   \item [\ding{46}\ ]
      $\mathsurround=0pt
      {}^{x}\hspace{-2pt}{A}\coloneq$ the set of functions from ${x}$ to ${A};$

      in particular, ${}^{0}\hspace{-2pt}{A}=\big\{\langle\rangle\big\};$
   \item [\ding{46}\ ]
      $\mathsurround=0pt
      {}^{<\hspace{0.2pt}\alpha}\hspace{-2pt}{A}
      \coloneq\bigcup_{\beta<\hspace{0.3pt}\alpha}{}^\beta\hspace{-2.5pt}{A};$

      in particular, ${}^{<\hspace{0.2pt}\omega}\hspace{-2.5pt}{A}$ is the set of finite sequences in ${A};$
   \item [\ding{46}\ ]
      if ${s}=\langle{s}_0,\ldots,{s}_{{n}-1}\rangle,$ then

      $\mathsurround=0pt
      {s}\hspace{2pt}\hat{}\hspace{1.4pt}\langle\hspace{-0.3pt}{a}\hspace{-0.5pt}\rangle
      \coloneq\langle{s}_0,\ldots,{s}_{{n}-1},{a}\rangle;$
   \item [\ding{46}\ ]
      $\mathsurround=0pt
      {s}\hspace{0.5pt}{\upharpoonright}\,{x}\coloneq$ the restriction of ${s}$ to ${x};$

      in particular,
      ${s}\hspace{0.5pt}{\upharpoonright}\,0=\langle\rangle\ $ for any ${s};$
   \item [\ding{46}\ ]
      note that
      
      $\mathsurround=0pt
      {s}\subseteq{t}
      \enskip\;\:\mathsf{iff}\enskip\:
      \length\hspace{0.5pt}{s}\leqslant\length\hspace{1.5pt}{t}\,\enskip\text{and}\enskip\,
      {s}={t}\hspace{1pt}{\upharpoonright}\hspace{1pt}\length{s}.$
   \end{itemize}
\end{notation}

Also we work with partial orders and we use the following notation:
\begin{notation}
   Suppose that
   $\mathcal{P}=( {Q}, <)$ is a strict partially ordered set; that is, $<$ is irreflexive and transitive on~${Q}.$ Let ${x},{y}\in{Q}$ and ${A},{B}\subseteq {Q}.$
   Then\textup{:}

   \begin{itemize}
   \item [\ding{46}\ ]
      $\mathsurround=0pt
      \nodes\mathcal{P}=\nodes\,( {Q}, <)
      \coloneq
      {Q}$

      (we use the word node because we intend to work with trees);
   \item [\ding{46}\ ]
      $\mathsurround=0pt
      {x}<_\mathcal{P} {y}
      \quad{\colon}{\longleftrightarrow}\quad
      {x}< {y};$
   \item [\ding{46}\ ]
      $\mathsurround=0pt
      {x}\leqslant_\mathcal{P} {y}
      \quad{\colon}{\longleftrightarrow}\quad
      {x}<_\mathcal{P} {y}\enskip\mathsf{or}\enskip{x}={y};$
   \item [\ding{46}\ ]
      $\mathsurround=0pt
      {x}\:{\parallel}_{\hspace{-1pt}\mathcal{P}}\: {y}
      \quad{\colon}{\longleftrightarrow}\quad
      {x}\nleqslant_\mathcal{P}\! {y}\enskip\mathsf{and}\enskip{x}\ngtr_\mathcal{P}\!{y};$
   \item [\ding{46}\ ]
      $\mathsurround=0pt
      {x}\hspace{-0.3pt}{\upspoon}_{\!\mathcal{P}}\coloneq\{{v}\in
      \nodes\mathcal{P}:{v}<_\mathcal{P} {x}\},\quad
      {x}\hspace{-0.3pt}{\downspoon}_{\hspace{-0.5pt}\mathcal{P}}\coloneq
      \{{v}\in\nodes\mathcal{P}:{v}>_\mathcal{P} {x}\};$
   \item [\ding{46}\ ]
      $\mathsurround=0pt
      {x}\hspace{-0.5pt}{\upfilledspoon}_{\!\mathcal{P}}\coloneq
      \{{v}\in\nodes\mathcal{P}:{v}\leqslant_\mathcal{P} {x}\},\quad {x}\hspace{-0.7pt}{\downfilledspoon}_{\hspace{-0.5pt}\mathcal{P}}\coloneq
      \{{v}\in\nodes\mathcal{P}:{v}\geqslant_\mathcal{P} {x}\};$
   \item [\ding{46}\ ]
      $\mathsurround=0pt
      {A}{\upfootline}_{\!\mathcal{P}}\coloneq
      \bigcup\{{v}\hspace{-0.5pt}{\upfilledspoon}_{\!\mathcal{P}}:{v}\in{A}\},\quad
      {A}{\downfootline}_\mathcal{P}\coloneq
      \bigcup\{{v}\hspace{-0.5pt}{\downfilledspoon}_{\hspace{-0.5pt}\mathcal{P}}:{v}\in{A}\};$
   \item [\ding{46}\ ]
      $\mathsurround=0pt
      \hspace{-0.5pt}(\hspace{-0.5pt}{x},{y}\hspace{-0.5pt})_\mathcal{P}\coloneq
      {x}\hspace{-0.3pt}{\downspoon}_{\hspace{-0.5pt}\mathcal{P}}\,\cap\,
      {y}{\upspoon}_{\!\mathcal{P}},\quad
      [{x},{y}]_\mathcal{P}\coloneq {x}\hspace{-0.7pt}{\downfilledspoon}_{\hspace{-0.5pt}\mathcal{P}}\,\cap\,
      {y}{\upfilledspoon}_{\!\mathcal{P}};$
   \item [\ding{46}\ ]
      $\mathsurround=0pt
      [{x},{y}\hspace{-0.5pt})_\mathcal{P}\coloneq {x}\hspace{-0.7pt}{\downfilledspoon}_{\hspace{-0.5pt}\mathcal{P}}\,\cap\,
      {y}{\upspoon}_{\!\mathcal{P}},\quad
      (\hspace{-0.5pt}{x},{y}]_\mathcal{P}\coloneq {x}\hspace{-0.3pt}{\downspoon}_{\hspace{-0.5pt}\mathcal{P}}\,\cap\,
      {y}{\upfilledspoon}_{\!\mathcal{P}};$
   \item [\ding{46}\ ]
      $\mathsurround=0pt
      {x}\hspace{1pt}\sqsubset_{\mathcal{P}}{y}
      \quad{\colon}{\longleftrightarrow}\quad
      {x}<_\mathcal{P}{y}\enskip\mathsf{and}\enskip(\hspace{-0.5pt}{x},{y}\hspace{-0.5pt})_\mathcal{P}=\varnothing;$
   \item [\ding{46}\ ]
      $\mathsurround=0pt
      \sons_{\hspace{0.5pt}\mathcal{P}\hspace{-0.5pt}}(\hspace{-0.7pt}{x}\hspace{-1pt})\coloneq
      \{\,{s}\in\nodes\mathcal{P}\,:\,{x}\hspace{1pt}\sqsubset_{\mathcal{P}}{s}\,\};$
   \item [\ding{46}\ ]
      $\mathsurround=0pt
      {A}\,$ is $\hspace{1pt}\mathsurround=0pt\mathcal{P}$-\textbf{cofinal} in $\!{B}
      \quad{\colon}{\longleftrightarrow}\quad
      {A}\,\subseteq {B}\enskip\mathsf{and}\enskip{B}\,\subseteq {A}{\upfootline}_{\!\mathcal{P}};$
   \item [\ding{46}\ ]
      $\mathsurround=0pt
      {A}\:$ is an \textbf{antichain} in $\mathcal{P}
      \quad{\colon}{\longleftrightarrow}\quad
      \forall{v}\,{\neq}\,{w}\,{\in}\,{A}\:[\,{v}\:{\parallel}_{\hspace{-1pt}\mathcal{P}}\: {w}\,];$
   \item [\ding{46}\ ]
      $\mathsurround=0pt{A}
      \:$ is a \textbf{chain} in $\mathcal{P}
      \quad{\colon}{\longleftrightarrow}\quad
      \forall{v},{w}\,{\in}\, {A}
      \:[\,
      {v}\leqslant_\mathcal{P} {w}\ \ \mathsf{or}\ \ {v}>_\mathcal{P} {w}
      \,];$
   \item [\ding{46}\ ]
      $\mathsurround=0pt
      \mathcal{P}\,$ has \textbf{bounded chains}
      $\!\quad{\colon}{\longleftrightarrow}\quad\!$

      for each nonempty chain ${C}$ in $\mathcal{P}$ there is ${z}\,{\in}\nodes\mathcal{P}$ such that
      ${C}\subseteq {z}\hspace{-0.5pt}{\upfilledspoon}_{\!\mathcal{P}};$
   \item [\ding{46}\ ]
      $\mathsurround=0pt
      \maxel{\mathcal{P}}\coloneq
      \{\,{m}\in\nodes\mathcal{P}\::\:
      {m}\hspace{-0.4pt}{\downspoon}_{\hspace{-0.5pt}\mathcal{P}}=\varnothing\,\},\quad
      \minel{\mathcal{P}}\coloneq
      \{\,{m}\in\nodes\mathcal{P}\::\:
      {m}\hspace{-0.6pt}{\upspoon}_{\!\mathcal{P}}=\varnothing\,\};$
   \item [\ding{46}\ ]
      for $\mathcal{P}$ with the least node,

      ${0}_{\mathcal{P}}\coloneq\mathsurround=0pt$ the least node of $\mathcal{P}.$
   \end{itemize}
\end{notation}

When a partially ordered set is a (set-theoretic) tree~\cite{jech,kun}, we use the following terminology:

\begin{notation}\label{not.trees}
   Suppose that
   $\mathcal{T}\hspace{-1pt}$ is a tree;  that is, $\mathcal{T}\hspace{-1pt}$ is a strict partially ordered set
   such that for each ${x}\in\nodes\mathcal{T}\hspace{-1pt},$ the set ${x}\hspace{-0.3pt}{\upspoon}_{\!\mathcal{T}}$ is well-ordered by $<_{\,\mathcal{T}}.$  Let ${x}\in\nodes\mathcal{T}\hspace{-1pt},$ let $\alpha$ be  an ordinal, and let $\kappa$ be a cardinal.  Then\textup{:}

   \begin{itemize}
   \item [\ding{46}\ ]
      $\mathsurround=0pt
      \height_{\mathcal{T}}(\hspace{-0.7pt}{x}\hspace{-1pt})\coloneq$
      the ordinal isomorphic to $( {x}\hspace{-0.3pt}{\upspoon}_{\!\mathcal{T}}, <_{\,\mathcal{T}});$
   \item [\ding{46}\ ]
      $\mathsurround=0pt
      \level_{\mathcal{T}}(\hspace{-0.7pt}\alpha\hspace{-0.8pt})\coloneq\big\{{v}\in\nodes\mathcal{T}:
      \height_{\mathcal{T}}(\hspace{-0.3pt}{v}\hspace{-0.3pt})=\alpha\big\};
      $
   \item [\ding{46}\ ]
      $\mathsurround=0pt
      \height\hspace{-0.4pt}\mathcal{T}\coloneq$
      the minimal ordinal $\beta$ such that $\level_{\mathcal{T}}(\beta)=\varnothing;$
   \item [\ding{46}\ ]
      $\mathsurround=0pt
      {B}\:$ is a \textbf{branch} in $\mathcal{T}
      \quad{\colon}{\longleftrightarrow}\quad
      {B}$ is a $\subseteq\mathsurround=0pt $-maximal chain in $\mathcal{T};$
   \item [\ding{46}\ ]
      $\mathsurround=0pt
      \branches\mathcal{T}\coloneq
      \{B\subseteq\nodes\mathcal{T}:{B}\,$ is a branch in $\!\mathcal{T}\hspace{1pt}\};$
   \item [\ding{46}\ ]
      if ${A}\subseteq\nodes\mathcal{T}\hspace{-1pt}$ is an antichain in $\mathcal{T}\hspace{-1pt}$ and ${x}\in{A}{\downfootline}_{\mathcal{T}\hspace{-1pt}},$ then

      $\mathsurround=0pt
      \roott_{\mathcal{T}}({x},{A})\coloneq$
      the ${r}$ in ${A}$ such that ${x}\in{r}{\downfilledspoon}_{\mathcal{T}};$
   \item [\ding{46}\ ]
%
      $\mathsurround=0pt
      \mathcal{T}$ is $\kappa$-\textbf{branching} $\quad{\colon}{\longleftrightarrow}\quad
      \forall{v}\,{\in}\nodes\mathcal{T}\hspace{-1pt}\setminus\maxel\mathcal{T}
      \,\big[\;
      |\hspace{-1pt}\sons_{\hspace{0.6pt}\mathcal{T}}({v})\,|=\kappa
      \;\big];$
   \item [\ding{46}\ ]
      $\mathsurround=0pt
      \mathcal{T}$ is an $\alpha,\hspace{-1pt}\kappa\hspace{1pt}$-\hspace{1pt}\textbf{tree} $\quad{\colon}{\longleftrightarrow}\quad
      \mathcal{T}$ is isomorphic to the tree
      $( {}^{<\hspace{0.2pt}\alpha}\hspace{-1pt}\kappa, {\subset}).$
   \end{itemize}
\end{notation}

The following example illustrates the usage of the above terminology:

\begin{pri}\label{pri_tree}
Let $\mathcal{T}=( {}^{<\hspace{0.2pt}\omega}\hspace{-2.5pt}{A}, {\subset}),$ where ${A}$ is nonempty. Then $\mathcal{T}\hspace{-1pt}$ is an $|{A}|\mathsurround=0pt $-branching tree with the least node,
$\nodes\mathcal{T}={}^{<\hspace{0.2pt}\omega}\hspace{-2.5pt}{A},$ ${0}_{\mathcal{T}}=\langle\rangle,$  and
$\maxel\mathcal{T}=\varnothing.$
Suppose that ${a},{b},{c},{d}\in{A}$ are different. Then we have\textup{:}
$$
\langle{c},{a},{b},{a}\rangle{\upspoon}_{\!\mathcal{T}} = \big\{\langle\rangle, \langle{c}\rangle, \langle{c},{a}\rangle, \langle{c},{a},{b}\rangle\big\},\quad
\height_{\hspace{0.6pt}\mathcal{T}}\big(\langle{c},{a},{b},{a}\rangle\big)=4,\quad
\height_{\hspace{0.6pt}\mathcal{T}}\big(\langle\rangle\big)=0,
$$
$$\level_{\mathcal{T}}(2)=\big\{\langle{x},{y}\rangle:{x},{y}\in{A}\,\big\},\quad
\level_{\mathcal{T}}(0)=\big\{\langle\rangle\big\},\quad
\level_{\hspace{0.6pt}\mathcal{T}}(\omega\hspace{-0.6pt})=\varnothing,\quad
\height\hspace{-0.4pt}\mathcal{T}=\omega,$$
$$
\sons_{\hspace{0.6pt}\mathcal{T}}\big(\langle{c},{a}\rangle\big)=\big\{\langle{c},{a},{x}\rangle: {x}\in{A}\big\},\quad
\roott_{\hspace{0.6pt}\mathcal{T}}      \Big(\langle{c},{b},{a},{d}\rangle,\big\{\langle\hspace{-0.7pt}{a}\hspace{-1pt}\rangle,\langle{c},{b}\rangle,
\langle{d}\rangle\big\}\Big)=\langle{c},{b}\rangle.
$$
\end{pri}

Also we list here several simple facts about trees, which we use in this paper:

\begin{lem}\label{about trees}
   Suppose that $\mathcal{T}\hspace{-1pt}$ is a tree. Then\textup{:}

   \begin{itemize}
   \item[\textup{(a)}]
      $\mathsurround=0pt\maxel\mathcal{T}=\big\{{v}\,{\in}\hspace{1pt}\nodes\mathcal{T}:\,
      \sons_{\hspace{0.6pt}\mathcal{T}}({v})=\varnothing\big\}.$
   \item[\textup{(\hspace{0.3pt}b\hspace{-1pt})}]
      If ${x},{y},{z}\in\nodes\mathcal{T}\hspace{-1pt},$ ${x}\geqslant_{\mathcal{T}}\!{y},$ and ${y}\:{\parallel}_{\!\mathcal{T}}\:{z},$

      then ${x}\:{\parallel}_{\!\mathcal{T}}\:{z}.$
   \item[\textup{(c)}]
      If ${C}$ is a chain in $\mathcal{T}\hspace{-1pt},$

      then there is ${B}\in\branches\mathcal{T}\hspace{-1pt}$ such that ${C}\subseteq{B}.$
   \item[\textup{(\hspace{-0.4pt}d)}]
      If ${B}\,{\in}\hspace{0.3pt}\branches\mathcal{T}\hspace{-1pt}$ and ${x}\,{\in}\,(\nodes\mathcal{T}\,)\,{\setminus}\,{B},$

      then there is ${b}\in\hspace{1pt}\!{B}$ such that ${x}\:{\parallel}_{\!\mathcal{T}}\:{b}.$
   \item[\textup{(e)}]
      If ${B}\,{\in}\branches\mathcal{T}\hspace{-1pt}$ and  ${b}\in\hspace{1pt}\!{B}\,{\setminus}\maxel\mathcal{T}\hspace{-1pt},$

      then ${B}\cap\hspace{0.4pt}\sons_\mathcal{T}({b})\neq\varnothing.$
   \item[\textup{(f)}]
      If ${B}\,{\in}\branches\mathcal{T}\hspace{-1pt}$ and ${b}\in\hspace{1pt}\!{B},$

      then ${b}\hspace{0.3pt}{\upfilledspoon}_{\!\mathcal{T}}\subseteq {B}.$
   \item[\textup{(g)}]
      If ${m}\in\maxel\mathcal{T}\hspace{-1pt},$

      then ${m}{\upfilledspoon}_{\!\mathcal{T}}$ is a branch in $\mathcal{T}\hspace{-1pt}.$
   \item[\textup{(h)}]
      If $\mathcal{T}\hspace{-1pt}$ has bounded chains,

      then $\branches\mathcal{T}=\{{m}{\upfilledspoon}_{\!\mathcal{T}}:{m}\in\maxel\mathcal{T}\hspace{1pt}\}.$
   \item[\textup{(i)}]
      The following are equivalent\textup{:}
      \begin{itemize}
         \item[\ding{226}\ ]
            $\mathcal{T}\hspace{-1pt}$ is an $\omega,\aleph_{0}\mathsurround=0pt $-tree.
         \item[\ding{226}\ ]
            $\mathcal{T}\hspace{-1pt}$ has the least node,
            $\mathcal{T}\hspace{-1pt}$ is $\aleph_0\mathsurround=0pt $-branching,
            $\maxel\mathcal{T}=\varnothing,$
            and $\height\hspace{-0.4pt}\mathcal{T}\hspace{-1pt}\leqslant\omega.$
      \hfill$\Box$
      \end{itemize}
   \end{itemize}
\end{lem}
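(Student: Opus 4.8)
\noindent\emph{Proof plan.}\ The idea is to derive (a)--(h) from the two structural facts that are available: for every node $x$ the set $x{\upspoon}_{\mathcal{T}}$ — and hence $x{\upfilledspoon}_{\mathcal{T}}=x{\upspoon}_{\mathcal{T}}\cup\{x\}$ — is well-ordered, so in particular a chain; and a branch, being a $\subseteq$-maximal chain, must already contain every node that is comparable with all of its members. The recurring \emph{maximality device} is: if $B$ is a branch and $w\notin B$ is comparable with every element of $B$, then $B\cup\{w\}$ is a chain properly extending $B$, contradicting maximality, so in fact $w\in B$. A second device, used to manufacture sons, is that every nonempty subset of the well-ordered set $x{\upfilledspoon}_{\mathcal{T}}$ has a least element. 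For (a) I would prove $v\in\maxel\mathcal{T}$ iff $\sons_{\mathcal{T}}(v)=\varnothing$: the forward direction is immediate, and conversely if some $s>_{\mathcal{T}}v$ existed, the least element $w_0$ of the nonempty well-ordered set $(v,s]_{\mathcal{T}}$ would satisfy $v\sqsubset_{\mathcal{T}}w_0$ (nothing lies strictly between, else it would be a smaller member of $(v,s]_{\mathcal{T}}$), so $w_0\in\sons_{\mathcal{T}}(v)$. Part (b) is a two-case split: from $x\geqslant_{\mathcal{T}}y$, if $x$ were comparable with $z$, then either $x\leqslant_{\mathcal{T}}z$ forces $y\leqslant_{\mathcal{T}}z$, or $z\leqslant_{\mathcal{T}}x$ places both $y$ and $z$ in the chain $x{\upfilledspoon}_{\mathcal{T}}$; either way $y\parallel_{\mathcal{T}}z$ fails. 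Part (c) is Zorn's lemma applied to the family of chains containing $C$ ordered by inclusion, the union of an inclusion-chain of chains being again a chain.

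Parts (d), (f), (g) all reduce to the maximality device: (d) because otherwise $B\cup\{x\}$ would be a chain; (f) because any $v\leqslant_{\mathcal{T}}b$ is comparable with every member of a branch through $b$; and (g) because $m{\upfilledspoon}_{\mathcal{T}}$ is a chain that can be extended neither upward (that would contradict $m\in\maxel\mathcal{T}$) nor sideways. Part (e) combines both devices: given $b\in B\setminus\maxel\mathcal{T}$, I would first show $B$ has a node strictly above $b$ — otherwise any $s>_{\mathcal{T}}b$ (which exists by (a)) would be comparable with all of $B$ yet missing from it — then pick such a $v\in B$, let $w_0$ be the least element of $(b,v]_{\mathcal{T}}$, check as in (a) that $w_0\in\sons_{\mathcal{T}}(b)$, and note $w_0$ is comparable with every $c\in B$ (if $c\geqslant_{\mathcal{T}}v$ then $c\geqslant_{\mathcal{T}}w_0$, and if $c<_{\mathcal{T}}v$ then $c$ and $w_0$ both lie in the chain $v{\upfilledspoon}_{\mathcal{T}}$), whence $w_0\in B$. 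For (h) the inclusion $\supseteq$ is (g), and for $\subseteq$ bounded chains yield an upper bound $z$ of a branch $B$; the maximality device forces $z\in B$, so $z=\max B$; moreover $z\in\maxel\mathcal{T}$ (a node above $z$ would extend $B$), and then $B=z{\upfilledspoon}_{\mathcal{T}}$ follows from (f) together with the fact that every member of $B$ is $\leqslant_{\mathcal{T}}z$.

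The substantive item is (i). Its forward direction only uses that having a least node, being $\aleph_0$-branching, having no maximal node, and having height $\leqslant\omega$ are order-isomorphism invariants, all of which hold for $({}^{<\omega}\omega,\subset)$ by Example~\ref{pri_tree} (taking $A=\omega$). For the converse I would build an order isomorphism $f\colon\nodes\mathcal{T}\to{}^{<\omega}\omega$ by recursion on levels. First I would record the preliminary facts that every node has finite height (its height is below $\height\mathcal{T}\leqslant\omega$), that the least node is the unique node of level $0$, and that — since no node is maximal and every node has exactly $\aleph_0$ sons by (a) and $\aleph_0$-branching — each node of level $n{+}1$ is a son of a unique node of level $n$, so all levels are populated. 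Then I set $f(0_{\mathcal{T}})=\langle\rangle$ and, once $f(x)=\sigma$ is defined, fix a bijection between $\sons_{\mathcal{T}}(x)$ and $\omega$ and send the $i$-th son to $\sigma\hspace{2pt}\hat{}\hspace{1.4pt}\langle i\rangle$.

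The main obstacle is the bookkeeping that turns this recursion into a genuine isomorphism. One must check that $f$ is defined at every node — each $x$ is reached along the finite chain $x{\upspoon}_{\mathcal{T}}$, whose consecutive elements are immediate successors precisely because the chain is well-ordered of finite type — that $f$ is a bijection onto ${}^{<\omega}\omega$ (surjectivity by induction on length using $\aleph_0$-branching, injectivity because distinct sons receive distinct last coordinates while the recursion preserves initial segments), and, the real crux, that $x<_{\mathcal{T}}y$ holds iff $f(x)\subset f(y)$. The key for this last equivalence is that $x<_{\mathcal{T}}y$ holds exactly when $x$ occurs in the well-ordered chain $y{\upspoon}_{\mathcal{T}}$, which by the construction of $f$ along fathers corresponds to $f(x)$ being a proper initial segment of $f(y)$; verifying both implications carefully is where the argument needs the most attention.
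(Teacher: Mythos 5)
The paper never proves this lemma: it is stated as a list of ``simple facts about trees'' and closed with a $\Box$, so there is no argument of the author's to compare yours against. Your proposal is correct — parts (a)--(h) follow exactly as you say from the two devices you isolate (well-orderedness of $x{\upspoon}_{\!\mathcal{T}}$, and the fact that a branch must absorb any node comparable with all its members), and your treatment of the only substantive item, (i), is sound: finiteness of all heights, uniqueness of fathers, and the level-by-level recursion with the two-way induction on height/length give exactly the isomorphism with $({}^{<\hspace{0.2pt}\omega}\hspace{-1pt}\omega,\subset)$ that the definition of an $\omega,\aleph_0$-tree requires.
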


\section{Foliage trees}
\label{sect.f.trees}

Informally, a foliage tree is a tree with a leaf at each node, where by a leaf we mean an arbitrary set.
Here is the formal definition:

\begin{deff}\label{def.fol.tree}
   {A} \textbf{foliage tree} is a pair $\mathbf{F}=(\mathcal{T}\hspace{-1pt},{l}) $ such that
   $\mathcal{T}\hspace{-1pt}$ is a (set-theoretic) tree
   and ${l}$~is a function with $\domain{l}=\nodes\mathcal{T}.$
   For each ${x}\in\nodes\mathcal{T}\hspace{-1pt},$ the ${l}(\hspace{-0.7pt}{x}\hspace{-1pt})$ is called the \textbf{leaf} of $\mathbf{F}$ at node ${x}$ and is denoted by $\mathbf{F}_{\hspace{-1.2pt}{x}}.$
   The tree $\mathcal{T}\hspace{-1pt}$ is called the \textbf{skeleton} of~$\mathbf{F}$ and is denoted by $\skeleton\mathbf{F}.$
\end{deff}

\begin{convention}\label{convention}
   Let $\mathbf{F}$ be a foliage tree and let $\mathcal{O}$ be an operation or a notion that is defined on trees. Then we use $\mathcal{O}(\mathbf{F})$ as the abbreviation for
   $\mathcal{O}(\hspace{-0.5pt}\skeleton\mathbf{F} ).$ For example,

   \begin{itemize}
   \item [\ding{46}\ ]
      $\mathsurround=0pt
      \nodes\mathbf{F}\coloneq\nodes\hspace{1.2pt}(\hspace{-1pt}\skeleton\mathbf{F}),$
   \item [\ding{46}\ ]
      $\mathsurround=0pt
      {0}_\mathbf{F}\coloneq{0}_{\skeleton\mathbf{F}},$
   \item [\ding{46}\ ]
      $\mathsurround=0pt
      \mathbf{F}\,$ has bounded chains
      $\quad{\colon}{\longleftrightarrow}\quad
      \!\skeleton\mathbf{F}$ has bounded chains,
   \item [\ding{46}\ ]
      $\mathsurround=0pt
      {x}\,\sqsubset_\mathbf{F}\,{y}
      \quad{\colon}{\longleftrightarrow}\quad
      {x}\,\sqsubset_{\skeleton\mathbf{F}}\,{y}.$
   \end{itemize}
\end{convention}

\begin{notation}\label{not.flesh}
   Let $\mathbf{F}$ be a foliage tree, let $\varnothing\neq{A}\subseteq\nodes\mathbf{F},$ and let ${z}\in\nodes\mathbf{F}.$
   Then\textup{:}

   \begin{itemize}
   \item [\ding{46}\ ]
      $\mathsurround=0pt
      \fruit_{\hspace{0.4pt}\mathbf{F}}\hspace{-1pt}(\hspace{-0.8pt}{A}\hspace{-0.2pt})\coloneq
      \bigcap\{\mathbf{F}_{\hspace{-1.2pt}{x}}:{x}\,{\in}\,{A}\};$
   \item [\ding{46}\ ]
      $\mathsurround=0pt
      \yield\mathbf{F}\coloneq
      {\bigcup}\big\{\!\fruit_{\hspace{0.4pt}\mathbf{F}}\hspace{-1pt}(\hspace{-1.1pt}{B}\hspace{-0.6pt}):
      {B}\in\branches\mathbf{F}\big\};$
   \item [\ding{46}\ ]
      $\mathsurround=0pt
      \flesh\hspace{-1pt}\mathbf{F}\coloneq
      \bigcup\{\mathbf{F}_{\hspace{-1.2pt}{x}}:{x}\in\nodes\mathbf{F}\};$
   \item [\ding{46}\ ]
      $\mathsurround=0pt
      \flesh_\mathbf{F}\hspace{-0.5pt}(\hspace{-0.8pt}{A}\hspace{-0.2pt})\coloneq
      \bigcup\{\mathbf{F}_{\hspace{-1.2pt}{x}}:{x}\,{\in}\,{A}\};$
   \item [\ding{46}\ ]
      $\mathsurround=0pt
      \shoot_\mathbf{F}\hspace{-0.5pt}(\hspace{-0.5pt}{z}\hspace{-0.5pt})\coloneq
      \big\{\flesh_\mathbf{F}\hspace{-0.5pt}(\hspace{-0.5pt}{C}):
      {C}$ is a cofinite subset of $\sons_{\hspace{0.5pt}\mathbf{F}\hspace{-0.5pt}}(\hspace{-0.5pt}{z}\hspace{-0.5pt})\,\big\};$
   \item [\ding{46}\ ]
      $\mathsurround=0pt
      \scope_\mathbf{F}\hspace{-0.5pt}({p})\coloneq
      \{\hspace{0.5pt}{y}\,{\in}\nodes\mathbf{F}:\mathbf{F}_{\hspace{-1.2pt}{y}}\,{\ni}\,{p}\hspace{0.5pt}\};$
   \item [\ding{46}\ ]
      for a space ${X}$ and a point ${p}$ in ${X},$

      $\mathsurround=0pt\nbhds({p},{X})\coloneq$
      the family of (not necessarily open) neighbourhoods of ${p}$ in ${X};$
   \item [\ding{46}\ ]
      for arbitrary sets $\gamma$ and $\delta,$

      $\mathsurround=0pt
      \gamma\gg\delta
      \quad{\colon}{\longleftrightarrow}\quad
      \gamma\,$ $\pi\!$-\emph{refines} $\,\delta
      \quad{\colon}{\longleftrightarrow}\quad
      \forall\hspace{-1pt}{D}\,{\in}\:\delta\hspace{0.7pt}{\setminus}\{\varnothing\}
      \ \,\exists{G}\,{\in}\,\gamma\hspace{0.3pt}{\setminus}\{\varnothing\}
      \ \,[\,{G}\subseteq{D}\,].$
   \end{itemize}
\end{notation}

%

\begin{deff}\label{def.B.f.tree}
   Let $\mathbf{F}$ be a foliage tree, ${X}$ a space, $\alpha$ an ordinal, and $\kappa$ a cardinal.
   Then\textup{:}

   \begin{itemize}
   \item [\ding{46}\ ]
      $\mathsurround=0pt
      \mathbf{F}\,$ has \textbf{nonempty leaves}$
      \quad{\colon}{\longleftrightarrow}\quad
      \forall{x}\,{\in}\nodes\mathbf{F}
       \ [\,\mathbf{F}_{\hspace{-1.2pt}{x}}\neq\varnothing
      \,];$
   \item [\ding{46}\ ]
      $\mathsurround=0pt
      \mathbf{F}\,$ is \textbf{nonincreasing}$
      \quad{\colon}{\longleftrightarrow}\quad
      \forall{x},{y}\,{\in}\nodes\mathbf{F}
      \ [\,
      {y}\geqslant_{\mathbf{F}} {x}\ \rightarrow\ \mathbf{F}_{\hspace{-1.2pt}{y}}\subseteq\mathbf{F}_{\hspace{-1.2pt}{x}}
      \,];$
   \item [\ding{46}\ ]
      $\mathsurround=0pt
      \mathbf{F}\,$ is \textbf{splittable}$
      \quad{\colon}{\longleftrightarrow}\quad
      \forall{x},{y}\,{\in}\nodes\mathbf{F}
      \ [\,
      {x}\:{\parallel}_{\!\mathbf{F}}\: {y}\ \rightarrow\ \mathbf{F}_{\hspace{-1.2pt}{x}}\cap\mathbf{F}_{\hspace{-1.2pt}{y}} =\varnothing
      \,];$
   \item [\ding{46}\ ]
      $\mathsurround=0pt
      \mathbf{F}\,$ is \textbf{complete}$
      \quad{\colon}{\longleftrightarrow}\quad
      \nodes\mathbf{F}\neq\varnothing\enskip\mathsf{and}\enskip
      \forall{B}\,{\in}\branches\mathbf{F}
      \ \big[\hspace{-0.7pt}
      \fruit_{\hspace{0.4pt}\mathbf{F}}\hspace{-1pt}(\hspace{-1.1pt}{B}\hspace{-0.6pt})\neq\varnothing
      \,\big];$
   \item [\ding{46}\ ]
      $\mathsurround=0pt
      \mathbf{F}\,$ has \textbf{strict branches}$
      \quad{\colon}{\longleftrightarrow}\quad
      \nodes\mathbf{F}\neq\varnothing\enskip\mathsf{and}\enskip
      \forall{B}\,{\in}\branches\mathbf{F}
      \ \big[\hspace{-0.7pt}
      \fruit_{\hspace{0.4pt}\mathbf{F}}\hspace{-1pt}(\hspace{-1.1pt}{B}\hspace{-0.6pt})\text{ is a singleton}
      \hspace{1pt}\big];$
   \item [\ding{46}\ ]
      $\mathsurround=0pt
      \mathbf{F}\,$ is \textbf{locally strict}$
      \quad{\colon}{\longleftrightarrow}\quad
      \forall{x}\,{\in}\nodes\mathbf{F}\,{\setminus}\maxel\mathbf{F}
      \ [\,\mathbf{F}_{\hspace{-1.2pt}{x}}\equiv
      \bigsqcup_{{s}\in\hspace{0.1pt}\sons_{\hspace{0.5pt}\mathbf{F}\hspace{-0.5pt}}
      (\hspace{-0.7pt}{x}\hspace{-1pt})}\mathbf{F}_{\hspace{-1.2pt}{s}}\,];$
   \item [\ding{46}\ ]
      $\mathsurround=0pt
      \mathbf{F}\,$ is \textbf{open} in  ${X}
      \quad{\colon}{\longleftrightarrow}\quad
      \forall{z}\,{\in}\nodes\mathbf{F}
      \ [\,\mathbf{F}_{\hspace{-1.2pt}{z}}$ is an open subset of ${X}
      \,];$
   \item [\ding{46}\ ]
      $\mathsurround=0pt
      \mathbf{F}\,$ is a \textbf{foliage} $\alpha,\hspace{-1pt}\kappa\hspace{1pt}$-\hspace{1pt}\textbf{tree}$
      \quad{\colon}{\longleftrightarrow}\quad
      \skeleton\mathbf{F}$
      is an $\alpha,\hspace{-1pt}\kappa\hspace{1pt}$-\hspace{1pt}tree \textup{(see Notation~\ref{not.trees});}
   \item [\ding{46}\ ]
      $\mathsurround=0pt
      \mathbf{F}\,$ is a \textbf{Baire foliage tree} on  ${X}
      \quad{\colon}{\longleftrightarrow}\quad
      \mathbf{F}$ is an open in ${X}$ locally strict foliage $\mathsurround=0pt \omega,\hspace{-1pt}\aleph_{0}\hspace{1pt}$-tree with strict branches
      and such that $\mathbf{F}_{\hspace{-1.2pt}{0}_{\hspace{-0.6pt}\mathbf{F}}}={X};$
   \item [\ding{46}\ ]
      $\mathsurround=0pt
      \mathbf{F}$ \textbf{grows into} ${X}
      \quad{\colon}{\longleftrightarrow}\quad
      \forall{p}\,{\in}\,{X}\ \forall{U}{\in}\nbhds({p},{X})\ \exists{z}\,{\in}\scope_\mathbf{F}\hspace{-0.5pt}({p})   \;\big[\,\shoot_\mathbf{F}\hspace{-0.5pt}(\hspace{-0.5pt}{z}\hspace{-0.5pt})\gg\{{U}\}\,\big];$
   \item [\ding{46}\ ]
      $\mathsurround=0pt
      \mathbf{F}\,$ is a $\hspace{1pt}\mathsurround=0pt\pi$-\textbf{tree} on  ${X}
      \quad{\colon}{\longleftrightarrow}\quad
      \mathbf{F}$ is a Baire foliage tree on ${X}$ and $\mathbf{F}$ grows into ${X}.$
   \end{itemize}
   Note that leaves of a $\hspace{1pt}\mathsurround=0pt\pi$-tree on ${X}\hspace{-1pt}$ are closed-and-open in ${X}\hspace{-1pt}$ and that the set of these leaves forms a countable
   $\hspace{1pt}\mathsurround=0pt\pi$-base and pseudo-base for ${X}.$
\end{deff}

The notion of a $\hspace{1pt}\mathsurround=0pt\pi$-tree is equivalent to the notion of a Lusin $\hspace{1pt}\mathsurround=0pt\pi$-base, which was introduced in~\cite{MPatr}; the only difference is that a Lusin $\hspace{1pt}\mathsurround=0pt\pi$-base is a family indexed by nodes of the tree
$({}^{<\hspace{0.2pt}\omega}\hspace{-1pt}\omega,\subset),$ while a $\hspace{1pt}\mathsurround=0pt\pi$-tree is a foliage tree whose skeleton is isomorphic to
$({}^{<\hspace{0.2pt}\omega}\hspace{-1pt}\omega,\subset).$
From a topological point of view, there is no difference between these two notions because of the following remark:

\begin{rem}\label{l.eqiv.lusin}
For any space ${X},$ the following are equivalent\textup{:}

   \begin{itemize}
   \item[\ding{226}\ ]
      $\mathsurround=0pt
      {X}\,$ has a $\hspace{1pt}\mathsurround=0pt\pi$-tree.
   \item[\ding{226}\ ]
      $\mathsurround=0pt
      {X}\,$ has a Lusin $\hspace{1pt}\mathsurround=0pt\pi$-base.
      \hfill$\Box$
   \end{itemize}
\end{rem}

Recall that the Baire space $\mathcal{N}$ is the set ${}^\omega\hspace{-1pt}\omega$ endowed with the Tychonov product topology, where $\omega$ carries the discrete topology. The Baire space has a basis $\big\{\{{p}\,{\in}\,{}^\omega\hspace{-1pt}\omega:{x}\subseteq{p}\}:
\hspace{1pt}{x}\,{\in}\,{}^{<\hspace{0.2pt}\omega}\hspace{-1pt}\omega\hspace{1pt}\big\},$ which is called~\cite{kech} the standard basis for ${}^\omega\hspace{-1pt}\omega.$ This standard basis can be viewed as a foliage tree:

\begin{notation}\label{def.stdrt.f.tr.of.B.sp}
   We denote by $\mathbf{S}$ the foliage tree such that

   \begin{itemize}
   \item[\ding{226}\ ]
      $\mathsurround=0pt
      \skeleton\mathbf{S}\coloneq({}^{<\hspace{0.2pt}\omega}\hspace{-1pt}\omega,\subset)\ \ $
      and
   \item[\ding{226}\ ]
      $\mathsurround=0pt
      \mathbf{S}_{{x}}\coloneq\{{p}\,{\in}\,{}^\omega\hspace{-1pt}\omega:{x}\subseteq{p}\}\ \ $
      for all ${x}\in{}^{<\hspace{0.2pt}\omega}\hspace{-1pt}\omega.$
   \end{itemize}
   We call this foliage tree the \textbf{standard foliage tree} of ${}^\omega\hspace{-1pt}\omega.$
\end{notation}


\begin{lem}\label{lem.pi.and.B.f.trees.vs.S}
   \begin{itemize}
   \item[\textup{(a)}]
      $\mathsurround=0pt
      \mathbf{S}$ is a $\hspace{1pt}\mathsurround=0pt\pi$-tree on the Baire space $\mathcal{N}=({}^\omega\hspace{-1pt}\omega,\tau_{\hspace{-1pt}\scriptscriptstyle\mathcal{N}}).$
   \item[\textup{(\hspace{0.3pt}b\hspace{-1pt})}]
      $\mathsurround=0pt
      \mathbf{S}$ is a Baire foliage tree on a space $({}^\omega\hspace{-1pt}\omega,\tau)
      \quad\textup{\textsf{iff}}\quad
      \tau\supseteq\tau_{\scriptscriptstyle\mathcal{N}}.$
   \item[\textup{(c)}]
      A space ${X}\hspace{-1pt}$ has a Baire foliage tree
      \quad\textup{\textsf{iff}}\quad

      $\mathsurround=0pt
      {X}$ is homeomorphic to some space $({}^\omega\hspace{-1pt}\omega,\tau)$
      such that $\tau\supseteq\tau_{\scriptscriptstyle\mathcal{N}}.$
   \item[\textup{(\hspace{-0.4pt}d)}]
      A space ${X}\hspace{-1pt}$ has a $\hspace{1pt}\mathsurround=0pt\pi$-tree
      \quad\textup{\textsf{iff}}\quad

      $\mathsurround=0pt
      {X}$ is homeomorphic to some space $({}^\omega\hspace{-1pt}\omega,\tau)$
      such that $\hspace{1pt}\mathbf{S}\hspace{-1pt}$ is a $\hspace{1pt}\mathsurround=0pt\pi$-tree on $({}^\omega\hspace{-1pt}\omega,\tau).$
   \end{itemize}
\end{lem}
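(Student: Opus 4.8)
The plan is to handle the four parts in the order (b), (a), (c), (d), since (b) isolates the single topological requirement hidden in the definition of a Baire foliage tree, and the remaining parts rest on it.

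For (b) I would first note that, apart from ``open in $X$'', every clause in the definition of a Baire foliage tree is purely set-theoretic: it refers only to $\skeleton\mathbf{S}=({}^{<\omega}\omega,\subset)$ and to the leaves $\mathbf{S}_x=\{p:x\subseteq p\}$ as sets. These clauses hold outright --- $({}^{<\omega}\omega,\subset)$ is an $\omega,\aleph_0$-tree with $\maxel\mathbf{S}=\varnothing$; local strictness is the observation $\mathbf{S}_x\equiv\bigsqcup_{n\in\omega}\mathbf{S}_{x^\frown\langle n\rangle}$ (a point extending $x$ is sorted by its next coordinate); each branch is $\{p\upharpoonright n:n\in\omega\}$ for a unique $p$, so $\fruit_\mathbf{S}(B)=\bigcap_n\mathbf{S}_{p\upharpoonright n}=\{p\}$ gives strict branches; and $\mathbf{S}_{\langle\rangle}={}^\omega\omega$. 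Hence the only remaining condition is that each $\mathbf{S}_x$ be $\tau$-open, and since $\{\mathbf{S}_x:x\in{}^{<\omega}\omega\}$ is exactly the standard basis of $\tau_{\scriptscriptstyle\mathcal{N}}$, this holds precisely when $\tau\supseteq\tau_{\scriptscriptstyle\mathcal{N}}$. Part (a) then follows quickly: taking $\tau=\tau_{\scriptscriptstyle\mathcal{N}}$ in (b) shows $\mathbf{S}$ is a Baire foliage tree on $\mathcal{N}$, and for ``grows into'' I would, given $p$ and $U\in\nbhds(p,\mathcal{N})$, choose $m$ with $\mathbf{S}_{p\upharpoonright m}\subseteq U$ and set $z=p\upharpoonright m$; then $z\in\scope_\mathbf{S}(p)$ and $\mathbf{S}_z=\flesh_\mathbf{S}(\sons_\mathbf{S}(z))$ is a nonempty member of $\shoot_\mathbf{S}(z)$ contained in $U$, so $\shoot_\mathbf{S}(z)\gg\{U\}$.

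The core of the lemma is the forward direction of (c). Given a Baire foliage tree $\mathbf{F}$ on $X$, I would fix an isomorphism $\phi:({}^{<\omega}\omega,\subset)\to\skeleton\mathbf{F}$. The first substep is to extract from local strictness that $\mathbf{F}$ is nonincreasing (each son's leaf sits inside its father's, hence $\mathbf{F}_y\subseteq\mathbf{F}_x$ whenever $y\geq x$, via finitely many son-steps) and splittable (if $x$ and $y$ are incomparable then, taking their meet $w$ in the tree, the two sons of $w$ lying below $x$ and below $y$ are distinct, so their leaves are disjoint, whence $\mathbf{F}_x\cap\mathbf{F}_y=\varnothing$). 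Since $\maxel\mathbf{F}=\varnothing$, local strictness lets me trace each $q\in X=\mathbf{F}_{0_\mathbf{F}}$ through a unique chain of sons $0_\mathbf{F}=y_0\sqsubset_\mathbf{F} y_1\sqsubset_\mathbf{F}\cdots$ with $q\in\mathbf{F}_{y_n}$ for all $n$; this chain is a branch $B_q$, and strict branches force $\fruit_\mathbf{F}(B_q)=\{q\}$. Conversely each branch delivers the unique point of its fruit, and the two assignments are mutually inverse, so $q\mapsto B_q$ is a bijection $X\to\branches\mathbf{F}$; composing with $B\mapsto\bigcup\phi^{-1}(B)$ yields a bijection $h:X\to{}^\omega\omega$. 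The key identity is $h^{-1}(\mathbf{S}_x)=\mathbf{F}_{\phi(x)}$, equivalently $\phi(x)\in B_q\iff q\in\mathbf{F}_{\phi(x)}$: the forward implication is built into the trace, while for the reverse I would use splittability --- the unique node $y$ of $B_q$ at level $\length x$, if different from $\phi(x)$, would be incomparable to $\phi(x)$ and hence carry a leaf disjoint from $\mathbf{F}_{\phi(x)}$, contradicting $q\in\mathbf{F}_y\cap\mathbf{F}_{\phi(x)}$. Endowing ${}^\omega\omega$ with the pushforward topology $\tau$ makes $h$ a homeomorphism, and $h^{-1}(\mathbf{S}_x)=\mathbf{F}_{\phi(x)}$ is open, so $\tau\supseteq\tau_{\scriptscriptstyle\mathcal{N}}$. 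The reverse direction of (c) is routine: a homeomorphism $X\cong({}^\omega\omega,\tau)$ transports the topology-respecting Baire foliage tree $\mathbf{S}$ of (b) back to $X$.

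Finally, (d) reuses this machinery. Its reverse direction transports a $\pi$-tree across a homeomorphism, the point being that ``grows into'' is expressed entirely through neighbourhoods and the operations $\scope,\shoot,\flesh,\gg$, all preserved by a homeomorphism. For the forward direction I would invoke the construction of (c): since $h(\mathbf{F}_{\phi(x)})=\mathbf{S}_x$, the pair $(\phi,h)$ is an isomorphism of foliage trees carrying $\mathbf{F}$ on $X$ to $\mathbf{S}$ on $({}^\omega\omega,\tau)$, so ``$\mathbf{F}$ grows into $X$'' transfers to ``$\mathbf{S}$ grows into $({}^\omega\omega,\tau)$''; together with $\tau\supseteq\tau_{\scriptscriptstyle\mathcal{N}}$ (making $\mathbf{S}$ a Baire foliage tree by (b)), this gives that $\mathbf{S}$ is a $\pi$-tree on $({}^\omega\omega,\tau)$. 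I expect the main obstacle to be the forward direction of (c): establishing splittability and pinning down $h^{-1}(\mathbf{S}_x)=\mathbf{F}_{\phi(x)}$ through the equivalence $\phi(x)\in B_q\iff q\in\mathbf{F}_{\phi(x)}$; the rest is bookkeeping and transport along $\phi$ and $h$.
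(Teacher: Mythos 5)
Your proposal is correct, and for the routine parts it coincides with the paper's argument: part (a) and the forward direction of (b) come from the fact that $\{\mathbf{S}_{x}:x\in{}^{<\omega}\omega\}$ is a basis for $\tau_{\scriptscriptstyle\mathcal{N}}$, the backward direction of (b) comes from (a), and the backward directions of (c) and (d) are transport along a homeomorphism. Where you genuinely diverge is on the two substantive implications, the forward directions of (c) and (d): the paper does not prove these at all but cites Lemmas~3.3 and~3.9 of \cite{MPatr} (the lemma being in essence a restatement of the equivalence between $\pi$-trees and Lusin $\pi$-bases), whereas you reconstruct that material from scratch. Your construction --- deriving from local strictness that $\mathbf{F}$ is nonincreasing and splittable (the splittability argument via the meet of two incomparable nodes in effect re-proves part of the unproved Lemma~\ref{about foliage trees}), using strict branches and the emptiness of $\maxel\mathbf{F}$ to set up the mutually inverse maps between points of ${X}$ and branches, and then verifying the identity $h^{-1}(\mathbf{S}_{x})=\mathbf{F}_{\phi(x)}$ --- is exactly the right mechanism: that single identity simultaneously gives $\tau\supseteq\tau_{\scriptscriptstyle\mathcal{N}}$ for (c) and the transfer of ``grows into'' (via $\shoot_{\mathbf{S}}(\phi^{-1}(z))=\{h(G):G\in\shoot_{\mathbf{F}}(z)\}$) for (d). What your route buys is a proof readable without consulting \cite{MPatr}; what the paper's route buys is brevity, at the price of deferring the core construction to the earlier paper.
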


\begin{proof}
   Part (a) and the $\rightarrow$ direction of (\hspace{0.3pt}b\hspace{-0.5pt}) follow from the fact that $\{\mathbf{S}_{{x}}:{x}\,{\in}\,{}^{<\hspace{0.2pt}\omega}\hspace{-1pt}\omega\}$ is a basis for the Baire space. The $\leftarrow$ direction of (\hspace{0.3pt}b\hspace{-0.5pt}) follows from (a). The $\rightarrow$ direction of (c) is a reformulation of Lemma~3.3 from~\cite{MPatr} and the $\leftarrow$ direction of (c) follows from (\hspace{0.3pt}b\hspace{-0.5pt}). The $\rightarrow$ direction of (d) is a reformulation of Lemma~3.9 from~\cite{MPatr}, the opposite direction of (d) is trivial.
\end{proof}

\begin{lem}\label{about foliage trees}
   Suppose that $\mathbf{F}$ is a foliage tree. Then\textup{:}

   \begin{itemize}
   \item [\textup{(a)}]
      If $\,\mathbf{F}$ is nonincreasing, $\varnothing\,{\neq}\,{A}\subseteq{B}\subseteq\nodes\mathbf{F},$ and ${A}$ is $\hspace{1pt}\mathsurround=0pt\mathbf{F}$-cofinal in ${B},$

      then
      $\fruit_{\hspace{0.4pt}\mathbf{F}}\hspace{-1pt}(\hspace{-0.7pt}{A}\hspace{-0.4pt})=
      \fruit_{\hspace{0.4pt}\mathbf{F}}\hspace{-1pt}(\hspace{-1.1pt}{B}\hspace{-0.6pt}).$
   \item [\textup{(\hspace{0.3pt}b\hspace{-1pt})}]
      If $\,\mathbf{F}$ has the least node and $\height\hspace{-0.4pt}\mathbf{F}\leqslant\omega,$
      then the following are equivalent\textup{:}
      \begin{itemize}
      \item[\ding{226}\ ]
         $\mathbf{F}$ is locally strict\textup{;}
      \item[\ding{226}\ ]
         $\mathbf{F}$ is splittable  and $\,\displaystyle\flesh\hspace{-1pt}\mathbf{F}=\yield\mathbf{F}.$
      \hfill$\Box$
      \end{itemize}
   \end{itemize}
\end{lem}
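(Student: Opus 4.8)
The plan is to prove (a) by a direct double inclusion and to reduce the equivalence in (b) to an analysis of the sets $\scope_{\mathbf{F}}(p)$, each of which will turn out, under either hypothesis, to be a branch whose fruit contains $p$. For (a): since $A\subseteq B$, intersecting over the larger index set can only shrink the result, so $\fruit_{\mathbf{F}}(B)\subseteq\fruit_{\mathbf{F}}(A)$. For the reverse inclusion I fix $b\in B$; by $\mathbf{F}$-cofinality there is $a\in A$ with $b\leqslant_{\mathbf{F}}a$, and nonincreasingness gives $\mathbf{F}_a\subseteq\mathbf{F}_b$, whence $\fruit_{\mathbf{F}}(A)\subseteq\mathbf{F}_a\subseteq\mathbf{F}_b$ because $a\in A$. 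As $b$ was arbitrary, $\fruit_{\mathbf{F}}(A)\subseteq\fruit_{\mathbf{F}}(B)$.

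For (b) I would first record three facts following from the standing hypotheses (least node; $\height\mathbf{F}\leqslant\omega$, so every node has finite height and every chain has strictly increasing, hence bounded-below-$\omega$, heights). (i) Always $\yield\mathbf{F}\subseteq\flesh\mathbf{F}$, since every branch contains $0_{\mathbf{F}}$ and $\fruit_{\mathbf{F}}(B)\subseteq\mathbf{F}_{0_{\mathbf{F}}}$; so $\flesh\mathbf{F}=\yield\mathbf{F}$ amounts to $\flesh\mathbf{F}\subseteq\yield\mathbf{F}$. (ii) $\mathbf{F}$ is splittable iff every $\scope_{\mathbf{F}}(p)$ is a chain, since two nodes share a point in their leaves exactly when they lie in a common $\scope_{\mathbf{F}}(p)$. (iii) Local strictness implies nonincreasingness: a son's leaf lies in its father's leaf by the defining disjoint union, and any $y\geqslant_{\mathbf{F}}x$ is joined to $x$ by the finite chain $[x,y]_{\mathbf{F}}$, so $\mathbf{F}_y\subseteq\mathbf{F}_x$ by iteration. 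With these in hand, for the direction from local strictness: given $x\parallel_{\mathbf{F}}y$, let $z$ be the greatest node with $z\leqslant_{\mathbf{F}}x$ and $z\leqslant_{\mathbf{F}}y$; the sons $s_x\leqslant_{\mathbf{F}}x$ and $s_y\leqslant_{\mathbf{F}}y$ of $z$ are distinct, so $\mathbf{F}_{s_x}\cap\mathbf{F}_{s_y}=\varnothing$ by local strictness, and $\mathbf{F}_x\cap\mathbf{F}_y\subseteq\mathbf{F}_{s_x}\cap\mathbf{F}_{s_y}=\varnothing$ by (iii), giving splittability. For $\flesh\subseteq\yield$, take $p\in\mathbf{F}_x$ and put $D=\scope_{\mathbf{F}}(p)$, a nonempty chain by (ii); I claim $D$ is a branch. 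If $w$ is comparable to all of $D$ and $w\leqslant_{\mathbf{F}}d$ for some $d\in D$, then $p\in\mathbf{F}_d\subseteq\mathbf{F}_w$ by (iii), so $w\in D$; otherwise $w$ is a strict upper bound of $D$, impossible if $D$ is infinite (finiteness of heights) and, if $D$ is finite with maximum $m$, contradicting maximality because local strictness at the non-maximal $m$ produces a son $s\in D$ with $s>_{\mathbf{F}}m$. Hence $D$ is a maximal chain and $p\in\fruit_{\mathbf{F}}(D)\subseteq\yield\mathbf{F}$.

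For the converse, assume $\mathbf{F}$ splittable with $\flesh\mathbf{F}=\yield\mathbf{F}$ and fix a non-maximal $x$. Distinct sons of $x$ are incomparable, so their leaves are disjoint by splittability, which is the disjointness half of local strictness. The crucial step is that for every $p\in\flesh\mathbf{F}$ the branch $B$ with $p\in\fruit_{\mathbf{F}}(B)$ (supplied by $\flesh=\yield$) satisfies $B\subseteq\scope_{\mathbf{F}}(p)$, while $\scope_{\mathbf{F}}(p)$ is itself a chain by (ii); since $B$ is a maximal chain sitting inside the chain $\scope_{\mathbf{F}}(p)$, necessarily $B=\scope_{\mathbf{F}}(p)$. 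Now if $p\in\mathbf{F}_s$ for a son $s$ of $x$, then $s\in\scope_{\mathbf{F}}(p)=B$, so $x\in B$ by Lemma~\ref{about trees}(f), hence $p\in\mathbf{F}_x$; this gives $\bigcup_{s\in\sons_{\mathbf{F}}(x)}\mathbf{F}_s\subseteq\mathbf{F}_x$. Conversely, if $p\in\mathbf{F}_x$ then $x\in\scope_{\mathbf{F}}(p)=B$, and since $x\notin\maxel\mathbf{F}$, Lemma~\ref{about trees}(e) gives a son $s$ of $x$ in $B=\scope_{\mathbf{F}}(p)$, i.e. $p\in\mathbf{F}_s$; this gives $\mathbf{F}_x\subseteq\bigcup_{s\in\sons_{\mathbf{F}}(x)}\mathbf{F}_s$. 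Together with disjointness, $\mathbf{F}_x\equiv\bigsqcup_{s\in\sons_{\mathbf{F}}(x)}\mathbf{F}_s$, so $\mathbf{F}$ is locally strict.

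I expect the main obstacle to be this converse direction, specifically the identification $B=\scope_{\mathbf{F}}(p)$: it requires first knowing from splittability that $\scope_{\mathbf{F}}(p)$ is a chain, then feeding in the branch delivered by $\flesh=\yield$ and invoking the maximality of branches among chains, before the elementary tree facts \ref{about trees}(e) and \ref{about trees}(f) become applicable. The forward direction and part (a), by contrast, are routine once observation (iii) (local strictness $\Rightarrow$ nonincreasing) is in place.
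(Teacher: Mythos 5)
Your proof is correct. The paper gives no proof of this lemma at all --- it is stated with a closing $\Box$ as one of the simple facts left to the reader --- so there is nothing to compare against; your write-up fills the gap soundly. The delicate points all check: in (b) the hypotheses (least node, $\height\mathbf{F}\leqslant\omega$) are exactly what guarantees the greatest common lower bound $z$ and its distinct sons $s_x\neq s_y$ in the splittability argument, as well as the finiteness of $w{\upspoon}_{\!\mathbf{F}}$ used to rule out a strict upper bound of an infinite $\scope_{\mathbf{F}}(p)$; and the key identification $B=\scope_{\mathbf{F}}(p)$ in the converse direction is legitimate, since under splittability $\scope_{\mathbf{F}}(p)$ is a chain containing the branch $B$, and a branch is by definition a $\subseteq$-maximal chain.
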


\section{Hybrid operation}
\label{sect.hybr}

In this paper we build a $\hspace{1pt}\mathsurround=0pt\pi$-tree for a subspace ${Y}\hspace{-1pt}$ of a space ${X}\hspace{-1pt}$ that already has a $\hspace{1pt}\mathsurround=0pt\pi$-tree by using the \emph{foliage hybrid operation} (see Definition~\ref{def.f.hybr} in Section~\ref{sect.f.hybr}). Thе foliage hybrid operation deals with foliage trees and we construct it by using another operation --- the \emph{hybrid operation} --- which deals with trees.
These two operations are quite complicated, you can look at pictures that illustrate all definitions in~\cite{patr.slides}.

In this section we build the hybrid operation (see Definition~\ref{def.hybr}), prove that the result of the hybrid operation is always a tree (see Proposition~\ref{hybr.is.tree}), and establish properties of this operation (see Proposition~\ref{prop.hybr}).

The hybrid operation modifies a given tree $\mathcal{T}\hspace{-1pt}$ in two steps: first we cut out several pieces from $\mathcal{T}\hspace{-1pt},$ after that we engraft special trees onto the places of cut out pieces. The special trees that are engrafted onto $\mathcal{T}\hspace{-1pt}$ are called \emph{grafts}, the cut out pieces are called \emph{explants}, and the parts of grafts that replace explants are called \emph{implants}:

\begin{deff}\label{def.graft}
   Let
   $\mathcal{T}\hspace{-1pt}$ be a tree.
   Then
   a \textbf{graft} for $\mathcal{T}\hspace{-1pt}$ is a tree $\mathcal{G}
   $  such that\textup{:}

   \begin{itemize}
   \item[\textup{(a)}]
      $|\hspace{1pt}{\nodes}\,\mathcal{G}\hspace{1pt}|>1;$
   \item[\textup{(\hspace{0.3pt}b\hspace{-1pt})}]
      $\mathcal{G}$ has the least node;
   \item[\textup{(c)}]
      ${0}_\mathcal{G}\in\nodes\mathcal{T}$ and
      $\maxel\mathcal{G}\subseteq\nodes\mathcal{T}\hspace{-1pt};$
   \item[\textup{(\hspace{-0.4pt}d)}]
      $\maxel\mathcal{G}\subseteq
      (\hspace{-1pt}{0}_\mathcal{G}\hspace{-1.4pt})\hspace{-0.8pt}{\downspoon}_{\mathcal{T}};$
   \item[\textup{(e)}]
      $\maxel\mathcal{G} $
      is an antichain in $\mathcal{T}\hspace{-1pt};$
   \item[\textup{(f)}]
      $\impl\mathcal{G}\cap\nodes\mathcal{T}=\varnothing,$
   \end{itemize}
   where the set
   $$
   \impl\mathcal{G}\coloneq\nodes\mathcal{G}\setminus
   \big(\{\hspace{-1pt}{0}_\mathcal{G}\hspace{-1pt}\}\cup\hspace{0.4pt}\maxel\mathcal{G}\big)
   $$
   is called the \textbf{implant} of $\mathcal{G}.$
   The set
   $$
   \expl(\mathcal{T}\hspace{-1pt},\mathcal{G})\coloneq  (\hspace{-1pt}{0}_\mathcal{G}\hspace{-1.4pt})\hspace{-0.8pt}{\downspoon}_{\mathcal{T}}\!
   \setminus(\hspace{-0.4pt}\maxel\mathcal{G}){\downfootline}_{\mathcal{T}}
   $$
   is called the \textbf{explant} of $\mathcal{T}\hspace{-1pt}$ and
   $\mathcal{G}.$
\end{deff}

Note that $\maxel\mathcal{G}$ may be empty and then
$(\hspace{-0.4pt}\maxel\mathcal{G}){\downfootline}_{\mathcal{T}}=
\varnothing{\downfootline}_{\mathcal{T}}=\varnothing.$ The following example is given to clarify Definition~\ref{def.graft}.

\begin{pri}
   Suppose that $\mathcal{T}=( {}^{<\hspace{0.2pt}\omega}\hspace{-2.5pt}{A}, {\subset})$ is a tree from Example~\ref{pri_tree} and ${a},{b},{c},{d}\in{A}$ are different. Then
   $\big\{\langle{a},{d}\rangle,\langle{a},{b},{c}\rangle\big\}\subseteq \langle\hspace{-0.7pt}{a}\hspace{-1pt}\rangle\hspace{-0.8pt}{\downspoon}_{\mathcal{T}}$ and
   $\big\{\langle{a},{d}\rangle,\langle{a},{b},{c}\rangle\big\}$ is an antichain in $\mathcal{T}\hspace{-1pt}.$
   Let $\mathsf{IMP}$ be a set disjoint from $\nodes\mathcal{T}$
   and let $\mathcal{G}$ be a tree such that

   \begin{itemize}
   \item [\ding{226}]
      $\nodes\mathcal{G}\,=\big\{\langle\hspace{-0.7pt}{a}\hspace{-1pt}\rangle,\langle{a},{d}\rangle,\langle{a},{b},{c}\rangle\big\}\cup\hspace{1pt} \mathsf{IMP},$
   \item [\ding{226}]
      ${0}_\mathcal{G}=\langle\hspace{-0.7pt}{a}\hspace{-1pt}\rangle,\ $ and
   \item [\ding{226}]
      $\maxel\mathcal{G}=\big\{\langle{a},{d}\rangle,\langle{a},{b},{c}\rangle\big\}.$
   \end{itemize}
   Then $\mathcal{G}$ is a graft for $\mathcal{T}\hspace{-1pt},$
   $\impl\mathcal{G}=\mathsf{IMP},$ and
   $$\expl(\mathcal{T}\hspace{-1pt},\mathcal{G})=
   \big\{\,{s}\,{\in}\,{}^{<\hspace{0.2pt}\omega}\hspace{-2.5pt}{A}\,:\,
   \langle\hspace{-0.7pt}{a}\hspace{-1pt}\rangle\subset{s}\,\big\}\setminus
   \big\{\,{s}\,{\in}\,{}^{<\hspace{0.2pt}\omega}\hspace{-2.5pt}{A}\,:\,
   \langle{a},{d}\rangle\subseteq{s}\enskip\textsf{or}\enskip\langle{a},{b},{c}\rangle\subseteq{s}\,\big\}.
   $$
\end{pri}

We want to engraft onto $\mathcal{T}\hspace{-1pt}$ many grafts at once, so we need to find conditions which guarantee that different grafts do not conflict with each other (for example, nodes of one graft should not lie in the explant of another graft).

\begin{deff}\label{def.c.f.grafts}
   Let $\mathcal{T}\hspace{-1pt}$ be a tree.
   Then
   $\gamma$ is a \textbf{consistent} family of grafts for $\mathcal{T}\hspace{-1pt}$
   iff

   \begin{itemize}
   \item [\textup{(a)}]
      $\forall\mathcal{G}\,{\in}\,\gamma\ [\,\mathcal{G}$ is a graft for $\mathcal{T}\,]$;
   \item [\textup{(\hspace{0.3pt}b\hspace{-1pt})}]
      $\forall\mathcal{D}\,{\neq}\,\mathcal{E}\,{\in}\:\gamma
      \ [\,\impl\mathcal{D}\cap\impl\mathcal{E}=\varnothing\,];$
   \item [\textup{(c)}]
      $\forall\mathcal{D}\,{\neq}\,\mathcal{E}\,{\in}\:\gamma$
      \begin{itemize}
      \item[\ding{226}\ ]
            ${0}_\mathcal{D}\:{\parallel}_{\!\mathcal{T}}\:
            {0}_\mathcal{E}\ \ \mathsf{or}$
      \item[\ding{226}\ ]
            ${0}_\mathcal{D}\in(\hspace{-0.5pt}\maxel\mathcal{E}) {\downfootline}_{\mathcal{T}}\!\ \ \mathsf{or}$
      \item[\ding{226}\ ]
            ${0}_\mathcal{E}\in(\hspace{-0.5pt}\maxel\mathcal{D}){\hspace{-1pt}\downfootline}_{\mathcal{T}}.$
     \end{itemize}
  \end{itemize}
   The set
   $$\supp(\mathcal{T}\hspace{-1pt},\gamma)\coloneq
    \nodes\mathcal{T}\setminus\bigcup_{\mathcal{G}\in\gamma}\expl(\mathcal{T}\hspace{-1pt},\mathcal{G})$$
   is called the \textbf{support} of $\mathcal{T}\hspace{-1pt}$ for
   $\gamma.$
\end{deff}

\begin{lem}\label{lem.graft}
   Suppose that $\gamma$ is a consistent family of grafts for a tree $\mathcal{T}\hspace{-1pt}$ and $\mathcal{G}\in\gamma.$
   Then\textup{:}

   \begin{itemize}
   \item [\textup{(a)}]
      $\nodes\mathcal{G}\equiv
      \{\hspace{-1pt}{0}_\mathcal{G}\hspace{-1pt}\}\sqcup\maxel\mathcal{G}\sqcup\impl\mathcal{G};$
   \item [\textup{(\hspace{0.3pt}b\hspace{-1pt})}]
      $\{\hspace{-1pt}{0}_\mathcal{G}\hspace{-1pt}\}\hspace{-0.5pt}\cup\maxel\mathcal{G}\cup\minel\mathcal{T}
      \,\subseteq\,\supp(\mathcal{T}\hspace{-1pt},\gamma);$
   \item [\textup{(c)}]
      $\impl\mathcal{G}\cap\hspace{0.4pt}\supp(\mathcal{T}\hspace{-1pt},\gamma)
      =\varnothing;$
   \item [\textup{(\hspace{-0.4pt}d)}]
      $\forall{s}\,{\in}\supp(\mathcal{T}\hspace{-1pt},\gamma)
      \ \big[\,
      {s}>_{\mathcal{T}}{0}_{\mathcal{G}}
      \ \longleftrightarrow\
      {s}\in(\hspace{-0.5pt}\maxel\mathcal{G}){\hspace{-1pt}\downfootline}_{\mathcal{T}}
      \,\big];$
   \item [\textup{(e)}]
      $\forall{s}\,{\in}\supp(\mathcal{T}\hspace{-1pt},\gamma)\ \forall{e}\,{\in}\expl(\mathcal{T}\hspace{-1pt},\mathcal{G})
      \ [\,
      {s}\leqslant_{\mathcal{T}}{0}_{\mathcal{G}}
      \ \longleftrightarrow\
      {s}<_{\mathcal{T}}{e}
      \,];$
   \item [\textup{(f)}]
      $\forall\mathcal{D}\,{\neq}\,\mathcal{E}\,{\in}\:\gamma\:
      [\,{0}_\mathcal{D}\neq{0}_\mathcal{E}
      \enskip\mathsf{and}\enskip
      \maxel\mathcal{D}\cap\maxel\mathcal{E}=\varnothing
      \,].$
            \hfill$\Box$
   \end{itemize}
\end{lem}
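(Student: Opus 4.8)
The plan is to derive all six items by unwinding Definitions~\ref{def.graft} and~\ref{def.c.f.grafts}, using throughout the single structural feature of a tree: for every node $e$ the predecessor set $e{\upspoon}_{\mathcal{T}}$ is well-ordered by $<_{\mathcal{T}}$, so any two nodes lying strictly below a common node are $<_{\mathcal{T}}$-comparable. Items (a) and (c) need no tree theory. For (a), by definition $\impl\mathcal{G}=\nodes\mathcal{G}\setminus(\{0_\mathcal{G}\}\cup\maxel\mathcal{G})$ is disjoint from $\{0_\mathcal{G}\}\cup\maxel\mathcal{G}$ and together they exhaust $\nodes\mathcal{G}$; it remains only to see $0_\mathcal{G}\notin\maxel\mathcal{G}$, which holds because $|\nodes\mathcal{G}|>1$ and $0_\mathcal{G}$ is least, so some node lies strictly above $0_\mathcal{G}$. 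For (c), the support is a subset of $\nodes\mathcal{T}$ by construction, while $\impl\mathcal{G}\cap\nodes\mathcal{T}=\varnothing$ by condition~(f) of Definition~\ref{def.graft}, so the intersection is empty.

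Next I would prove (d), because (b) and (e) rely on it. The implication $\Leftarrow$ is bookkeeping: if $s\in(\maxel\mathcal{G}){\downfootline}_{\mathcal{T}}$ then $s\geqslant_{\mathcal{T}}m$ for some $m\in\maxel\mathcal{G}$, and $m>_{\mathcal{T}}0_\mathcal{G}$ by Definition~\ref{def.graft}(d), whence $s>_{\mathcal{T}}0_\mathcal{G}$. For $\Rightarrow$ I argue by contradiction: if $s>_{\mathcal{T}}0_\mathcal{G}$ but $s\notin(\maxel\mathcal{G}){\downfootline}_{\mathcal{T}}$, then $s\in\expl(\mathcal{T},\mathcal{G})$ by definition of the explant, contradicting $s\in\supp(\mathcal{T},\gamma)$ together with $\mathcal{G}\in\gamma$. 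With (d) available, (e) is quick: $\Rightarrow$ is transitivity, since $s\leqslant_{\mathcal{T}}0_\mathcal{G}<_{\mathcal{T}}e$; and for $\Leftarrow$, note that $s$ and $0_\mathcal{G}$ both lie strictly below $e$ and are therefore comparable, so if $s>_{\mathcal{T}}0_\mathcal{G}$ then (d) places $s$ above some $m\in\maxel\mathcal{G}$, giving $e>_{\mathcal{T}}s\geqslant_{\mathcal{T}}m$ and hence $e\in(\maxel\mathcal{G}){\downfootline}_{\mathcal{T}}$, contradicting $e\in\expl(\mathcal{T},\mathcal{G})$.

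The substance of the lemma is in (b) and (f), where the trichotomy of Definition~\ref{def.c.f.grafts}(c) is played against the antichain condition on $\maxel$. For (b), a node of $\minel\mathcal{T}$ lies in the support because each explant is contained in some $0_{\mathcal{G}'}{\downspoon}_{\mathcal{T}}$ while a minimal node has nothing strictly below it, so it cannot strictly exceed a root. To place $0_\mathcal{G}$ and each $m\in\maxel\mathcal{G}$ in the support I show they cannot belong to $\expl(\mathcal{T},\mathcal{G}')$ for any $\mathcal{G}'\in\gamma$: the case $\mathcal{G}'=\mathcal{G}$ is immediate from the definition of the explant, and for $\mathcal{G}'\neq\mathcal{G}$ the forbidden membership forces the two roots below a common node, hence comparable, excluding the incomparability alternative of consistency; each remaining alternative then yields a contradiction --- either against the explant's exclusion of $(\maxel\mathcal{G}'){\downfootline}_{\mathcal{T}}$, or against antisymmetry through the strictness $m>_{\mathcal{T}}0_\mathcal{G}$ of Definition~\ref{def.graft}(d), or against the antichain property of Definition~\ref{def.graft}(e) after producing two distinct comparable maximal nodes. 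Part (f) follows the same scheme: $0_\mathcal{D}\neq 0_\mathcal{E}$ because each consistency alternative either leaves the roots incomparable or makes one root strictly exceed the other via that same strictness, and $\maxel\mathcal{D}\cap\maxel\mathcal{E}=\varnothing$ because a shared maximal node $m$ would put $0_\mathcal{D}$ and $0_\mathcal{E}$ below $m$, hence comparable, and the two surviving alternatives then place $m$ strictly above a second element of $\maxel\mathcal{E}$ or $\maxel\mathcal{D}$, contradicting Definition~\ref{def.graft}(e).

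I expect the main obstacle to be precisely this antichain-versus-trichotomy case analysis in (b) and (f). The care needed is twofold: one must ensure that in each branch of the case split the two produced maximal nodes are genuinely distinct, so that the antichain property yields a real contradiction rather than a tautology, and one must use the strict inequality $m>_{\mathcal{T}}0_\mathcal{G}$ supplied by Definition~\ref{def.graft}(d) to upgrade $\geqslant_{\mathcal{T}}$ to $>_{\mathcal{T}}$ exactly where comparability of the roots is converted into a strict domination. Everything else is routine unwinding of the two definitions.
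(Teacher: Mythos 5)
Your proof is correct. Note that the paper itself states Lemma~\ref{lem.graft} without proof (it is marked with $\Box$ as a routine consequence of Definitions~\ref{def.graft} and~\ref{def.c.f.grafts}), so there is no argument of the author's to compare against; your write-up supplies exactly the verification the paper leaves to the reader, including the two points that genuinely require care --- using linear ordering of $e{\upspoon}_{\mathcal{T}}$ (respectively $m{\upspoon}_{\mathcal{T}}$) to force comparability of the two roots before invoking the trichotomy of Definition~\ref{def.c.f.grafts}(c), and upgrading $\geqslant_{\mathcal{T}}$ to $>_{\mathcal{T}}$ via Definition~\ref{def.graft}(\hspace{-0.4pt}d) so that the antichain condition of Definition~\ref{def.graft}(e) yields a contradiction between genuinely distinct maximal nodes.
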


Now we can give a definition of the hybrid operation:

\begin{deff}\label{def.hybr}
   Let $\gamma$ be a consistent family of grafts for a tree $\mathcal{T}\hspace{-1pt}.$ Then
   the \textbf{hybrid} of $\mathcal{T}\hspace{-1pt}$ and $\gamma$
    --- in symbols, $\hybr(\mathcal{T}\hspace{-1pt},\gamma)$ ---
   is a pair $( {H},<)$
   such that\textup{:}

   \begin{itemize}
   \item[\textup{(a)}]
      $\mathsurround=0pt\displaystyle {H}\,\coloneq\,\supp(\mathcal{T}\hspace{-1pt},\gamma)\,\cup\,\hspace{0.4pt}
      \bigcup_{\mathcal{G}\in\hspace{0.5pt}\gamma}\impl\mathcal{G}\vspace{-0.5ex}$

      (note that  all these sets are pairwise disjoint by (\hspace{0.3pt}b\hspace{-0.5pt}) of Definition~\ref{def.c.f.grafts} and (c) of Lemma~\ref{lem.graft});
   \item[\textup{(\hspace{0.3pt}b\hspace{-1pt})}]
      $\mathsurround=0pt{<}\;$ is a relation on ${H}$ defined by\textup{:}

      $\mathsurround=0pt{x}< {y}\quad{\colon}{\longleftrightarrow}$
      \begin{itemize}
      \item[\textup{(b1)}]
         $\mathsurround=0pt{x},{{y}}\in\supp(\mathcal{T}\hspace{-1pt},\gamma)\ \,\ \mathsf{and}\ \ {x}<_{\mathcal{T}}{{y}}$

         $\mathsurround=0pt\mathsf{or}$
      \item[\textup{(b2)}]
         $\mathsurround=0pt\exists\hspace{0.5pt}\mathcal{G}\hspace{1pt}{\in}\,\gamma\,$ such that
         \begin{itemize}
         \item[\ding{226}\ ]
            ${x},{{y}}\in\impl\mathcal{G}\ \ \ \mathsf{and}$
         \item[\ding{226}\ ]
            ${x}<_{\mathcal{G}} {{y}}\ \ $
         \end{itemize}
         $\mathsurround=0pt\mathsf{or}$
      \item[\textup{(b3)}]
         $\mathsurround=0pt\exists\hspace{0.5pt}\mathcal{G}\hspace{1pt}{\in}\,\gamma\,$ such that
         \begin{itemize}
         \item[\ding{226}\ ]
            ${x}\in\supp(\mathcal{T}\hspace{-1pt},\gamma)\ \ \ \mathsf{and}$
         \item[\ding{226}\ ]
            ${{y}}\in\impl\mathcal{G}\ \ \ \mathsf{and}$
         \item[\ding{226}\ ]
            ${x}\leqslant_{\mathcal{T}}\!{0}_\mathcal{G}$
         \end{itemize}
         $\mathsurround=0pt\mathsf{or}$
      \item[\textup{(b4)}]
         $\mathsurround=0pt\exists\hspace{0.5pt}\mathcal{G}\hspace{1pt}{\in}\,\gamma\,$ such that
         \begin{itemize}
         \item[\ding{226}\ ]
            ${x}\in\impl\mathcal{G}\ \ \ \mathsf{and}$
         \item[\ding{226}\ ]
            ${{y}}\in\supp(\mathcal{T}\hspace{-1pt},\gamma)\ \ \ \mathsf{and}$
         \item[\ding{226}\ ]
            ${{y}}\in(\hspace{-0.5pt}\maxel\mathcal{G}){\hspace{-1pt}\downfootline}_{\mathcal{T}}\!\ \ \ \mathsf{and}$
         \item[\ding{226}\ ]
            ${x}<_{\mathcal{G}}\roott_{\mathcal{T}}\!\big({{y}},\maxel\mathcal{G}\big)$
         \end{itemize}
         $\mathsurround=0pt\mathsf{or}$
      \item[\textup{(b5)}]
         $\exists\hspace{0.5pt}\mathcal{D}\hspace{1pt}{\neq}\,\mathcal{E}\,{\in}\,\gamma$ such that
         \begin{itemize}
         \item[\ding{226}\ ]
            ${x}\in\impl\mathcal{D}\ \ \ \mathsf{and}$
         \item[\ding{226}\ ]
            ${{y}}\in\impl\mathcal{E}\ \ \ \mathsf{and}$
         \item[\ding{226}\ ]
            ${0}_\mathcal{E}\in(\hspace{-0.5pt}\maxel\mathcal{D}){\hspace{-1pt}\downfootline}_{\mathcal{T}}\!\ \ \ \mathsf{and}$
         \item[\ding{226}\ ]
            ${x}<_{\mathcal{D}}\roott_{\mathcal{T}}({0}_\mathcal{E}
            ,\maxel\mathcal{D}).$
         \end{itemize}
      \end{itemize}
   \end{itemize}
\end{deff}

We could give a shorter (but less suitable for our aims) definition for the hybrid operation in the following equivalent way:

\begin{rem}\label{rem.hybg}
   Clause \textup{(\hspace{0.3pt}b\hspace{-0.5pt})} of Definition~\ref{def.hybr} is equivalent to the assertion that
   ${<}$ is the transitive closure of relation
   $$
   \big(\!<_{\mathcal{T}}\:\cup\ \bigcup_{\mathcal{G}\in\gamma}<_\mathcal{G}\!\big)\hspace{1pt}
   \,\cap\, ({H}\times {H}).
   $$
\end{rem}
\begin{proof} Let ${\vartriangleleft}\:\coloneq\,\big({<}_{\mathcal{T}}\ {\cup}\:
\bigcup_{\mathcal{G}\in\gamma}{<_\mathcal{G}}\big)\hspace{0.5pt}\:\cap\:({H}\times {H}).$
We have ${\vartriangleleft}\hspace{1pt}\subseteq{<}$ by (a)--(\hspace{0.3pt}b\hspace{-0.5pt}) of Lemma~\ref{lem.hybr} and $<$ is transitive by Proposition~\ref{hybr.is.tree} (we do not use Remark~\ref{rem.hybg} in the proofs of Lemma~\ref{lem.hybr} and Proposition~\ref{hybr.is.tree}).

It remains to show that if ${\vartriangleleft}\subseteq{\lessdot}$ and $\lessdot$ is a transitive relation on ${H},$  then ${<}\subseteq{\lessdot}\,.$
Suppose $(\hspace{-0.5pt}{x},{y}\hspace{-0.5pt})\hspace{0.5pt}\in{<};$  this means that one of conditions (b1)--(b5) of Definition~\ref{def.hybr} holds. For example, if (b3) holds, then ${x}\in\supp(\mathcal{T}\hspace{-1pt},\gamma),$ ${y}\in\impl\mathcal{G},$
${x}\leqslant_{\mathcal{T}}\!{0}_\mathcal{G},$ and
${0}_\mathcal{G}<_\mathcal{G}{y},$
so ${x}\trianglelefteq{0}_\mathcal{G}\vartriangleleft {y}.$
Then ${x}\leqdot{0}_\mathcal{G}\lessdot {y},$
whence $(\hspace{-0.5pt}{x},{y}\hspace{-0.5pt})\in{\lessdot}$ by transitivity. The other cases are similar.
\end{proof}

\begin{lem}\label{lem.hybr}
   Suppose that $\gamma$ is a consistent family of grafts for a tree $\mathcal{T}\hspace{-1pt},$
   $\mathcal{H}=\hybr(\mathcal{T}\hspace{-1pt},\gamma),$ and $\mathcal{G}\in\gamma.$
   Then\textup{:}

   \begin{itemize}
   \item [\textup{(a)}]
      $\mathsurround=0pt\hspace{0.5pt}\nodes\mathcal{G}\subseteq\nodes\mathcal{H}\ \ \mathsf{and}$

      $\mathsurround=0pt\forall{x},{y}\,{\in}\nodes\mathcal{G}
      \ [\,
      {x}<_\mathcal{H} {y}\ \leftrightarrow\ {x}<_{\mathcal{G}} {y}
      \,];$
       \item [\textup{(\hspace{0.3pt}b\hspace{-1pt})}]
      $\mathsurround=0pt\hspace{0.5pt}\supp(\mathcal{T}\hspace{-1pt},\gamma)=\nodes\mathcal{H}\cap\nodes\mathcal{T}\ \ \mathsf{and}$

      $\mathsurround=0pt\forall{x},{y}\,{\in}\supp(\mathcal{T}\hspace{-1pt},\gamma)
      \ [\,
      {x}<_{\mathcal{H}} {y}\ \leftrightarrow\ {x}<_{\mathcal{T}} {y}
      \,];$
   \item [\textup{(c)}]
      $\mathsurround=0pt\forall{h}\,{\in}\,\nodes\mathcal{H}\ \forall{i}\,{\in}\,\impl\mathcal{G}
      \ [\,
      {h}\geqslant_\mathcal{H}{i}\ \rightarrow\ {h}>_{\mathcal{H}\!}{0}_{\mathcal{G}}
      \,];$
   \item [\textup{(\hspace{-0.4pt}d)}]
      $\mathsurround=0pt\forall{h}\,{\in}\,\nodes\mathcal{H}\ \forall{i}\,{\in}\,\impl\mathcal{G}
      \ [\,
      {h}\leqslant_{\mathcal{H}\!}{0}_{\mathcal{G}}
      \ \rightarrow\
      {h}<_\mathcal{H}{i}
      \,];$
   \item [\textup{(e)}]
      $\mathsurround=0pt\forall{h}\,{\in}\hspace{1pt}\nodes\mathcal{H}\hspace{1pt}{\setminus}\impl\mathcal{G}\ \forall{i}\,{\in}\,\impl\mathcal{G}
      \ [\,
      {h}\leqslant_{\mathcal{H}\!}{0}_{\mathcal{G}}
      \ \leftrightarrow\
      {h}<_\mathcal{H}{i}
      \,];$
   \item [\textup{(f)}]
      $\mathsurround=0pt\forall{h}\,{\in}\,\nodes\mathcal{H}\hspace{1pt}{\setminus}\impl\mathcal{G}
      \ \big[\,
      {h}>_{\mathcal{H}\!}{0}_{\mathcal{G}}
      \ \leftrightarrow\
      {h}\in(\hspace{-0.5pt}\maxel\mathcal{G}){\hspace{-1pt}\downfootline}_\mathcal{H}
      \,\big];$
   \item [\textup{(g)}]
      $\mathsurround=0pt\forall{g}\,{\in}\nodes\mathcal{G}
      \ \big[\,
      {g}{\upspoon}_{\!\mathcal{H}}\equiv {g}{\upspoon}_{\!\mathcal{G}}
      \sqcup (\hspace{-1pt}{0}_\mathcal{G}\hspace{-1.4pt})\hspace{-1pt}{\upspoon}_{\!\mathcal{H}}
      \,\big].$
   \end{itemize}
\end{lem}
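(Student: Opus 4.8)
The plan is to verify each item by unfolding the hybrid relation $<_{\mathcal{H}}$ into its defining clauses (b1)--(b5) of Definition~\ref{def.hybr}. Throughout I lean on three bookkeeping facts about the partition $\nodes\mathcal{H}=\supp(\mathcal{T},\gamma)\cup\bigcup_{\mathcal{G}\in\gamma}\impl\mathcal{G}$: implants are pairwise disjoint and disjoint from $\nodes\mathcal{T}$ (Definitions~\ref{def.graft}(f) and~\ref{def.c.f.grafts}(b)), while $\{0_{\mathcal{G}}\}\cup\maxel\mathcal{G}\subseteq\supp(\mathcal{T},\gamma)$ with $\maxel\mathcal{G}$ an antichain lying strictly above $0_{\mathcal{G}}$ in $\mathcal{T}$ (Definition~\ref{def.graft}(d),(e) and Lemma~\ref{lem.graft}(b)). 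A constraint I will respect everywhere is that transitivity of $<_{\mathcal{H}}$ is \emph{not} yet available (it is proved only in Proposition~\ref{hybr.is.tree}, whose proof may not cite this lemma), so every chained inequality $a<_{\mathcal{H}}b<_{\mathcal{H}}c$ must be re-derived by exhibiting the single clause that yields $a<_{\mathcal{H}}c$.

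For (a) and (b) the containment $\nodes\mathcal{G}\subseteq\nodes\mathcal{H}$ and the identity $\supp(\mathcal{T},\gamma)=\nodes\mathcal{H}\cap\nodes\mathcal{T}$ are immediate from the partition and the disjointness facts. I prove the order equivalences in both directions using the partition of $\nodes\mathcal{G}$ into $\{0_{\mathcal{G}}\}$, $\maxel\mathcal{G}$, $\impl\mathcal{G}$: the forward implications (from $<_{\mathcal{G}}$ or $<_{\mathcal{T}}$ to $<_{\mathcal{H}}$) are read off the matching clause --- (b2) for two implant nodes, (b3) from $0_{\mathcal{G}}$ into the implant, (b4) from the implant up to a maximal node (where $\roott_{\mathcal{T}}(y,\maxel\mathcal{G})=y$), and (b1) between support nodes --- whereas for the converse implications I check that each of (b1)--(b5), with both endpoints constrained to $\nodes\mathcal{G}$ (resp.\ to $\supp(\mathcal{T},\gamma)$), is either impossible by disjointness or forces the desired internal order; the antichain property of $\maxel\mathcal{G}$ in $\mathcal{T}$ is what excludes the stray comparabilities in the (b1) case.

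For (c)--(f) each claim is a monotonicity statement relating an implant node to $0_{\mathcal{G}}$ or to $\maxel\mathcal{G}$, and I establish it by the same mechanism: assume the hypothesized relation, identify the clause that produced it, and reproduce the target relation through a (possibly different) clause. The recurring moves are the mutual translations among (b3), (b4) and (b5): for instance, in (d) a relation $h<_{\mathcal{H}}0_{\mathcal{G}}$ arising from (b4) through a graft $\mathcal{D}\neq\mathcal{G}$ is converted, via $0_{\mathcal{G}}\in(\maxel\mathcal{D}){\downfootline}_{\mathcal{T}}$, into the (b5) witness for $h<_{\mathcal{H}}i$; in (f) I additionally use Lemma~\ref{lem.graft}(d) to pass between $s>_{\mathcal{T}}0_{\mathcal{G}}$ and $s\in(\maxel\mathcal{G}){\downfootline}_{\mathcal{T}}$ for support nodes $s$. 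Part (e) forward is exactly (d), and its converse is one more clause analysis (a (b5) witness converting into a (b4) witness for $h<_{\mathcal{H}}0_{\mathcal{G}}$).

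Finally, part (g) asserts $g{\upspoon}_{\mathcal{H}}\equiv g{\upspoon}_{\mathcal{G}}\sqcup(0_{\mathcal{G}}){\upspoon}_{\mathcal{H}}$, which I treat by the cases $g=0_{\mathcal{G}}$ (trivial), $g\in\impl\mathcal{G}$, and $g\in\maxel\mathcal{G}$. Disjointness of the two summands follows from irreflexivity of $<_{\mathcal{H}}$ (an easy clause check, since no clause can relate a node to itself) together with the monotonicity already obtained in (b)--(d). The inclusion $\supseteq$ is handled by feeding each predecessor of $0_{\mathcal{G}}$ --- either a support node below $0_{\mathcal{G}}$ (via (b1)) or an implant node of a foreign graft (via (b4)) --- into the clause that makes it a predecessor of $g$. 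I expect the genuine obstacle to be the inclusion $\subseteq$ in the case $g\in\maxel\mathcal{G}$: given $v<_{\mathcal{H}}g$ obtained from (b4) through a foreign graft $\mathcal{G}'$, I must decide whether $v$ belongs to $g{\upspoon}_{\mathcal{G}}$ or to $(0_{\mathcal{G}}){\upspoon}_{\mathcal{H}}$, which forces a delicate comparison of $0_{\mathcal{G}}$, $0_{\mathcal{G}'}$ and $\roott_{\mathcal{T}}(g,\maxel\mathcal{G}')$ within the chain $g{\upspoon}_{\mathcal{T}}$. The key will be that the consistency trichotomy of Definition~\ref{def.c.f.grafts}(c), combined with the antichain property of $\maxel\mathcal{G}$ and $\maxel\mathcal{G}'$ in $\mathcal{T}$, excludes every configuration except $\roott_{\mathcal{T}}(g,\maxel\mathcal{G}')\leqslant_{\mathcal{T}}0_{\mathcal{G}}$, after which (b4) delivers $v<_{\mathcal{H}}0_{\mathcal{G}}$ as required.
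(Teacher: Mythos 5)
Your proposal is correct and matches the paper's approach: the paper dismisses (a)--(e) as straightforward and for (f) and (g) cites exactly the ingredients you use --- earlier parts of the lemma itself, (\hspace{-0.4pt}d) and (f) of Lemma~\ref{lem.graft}, and the antichain condition (e) of Definition~\ref{def.graft} --- so your clause-by-clause unfolding of (b1)--(b5) is just the explicit version of what the paper leaves implicit. Your insistence on avoiding transitivity of $<_{\mathcal{H}}$ is also the right reading of the logical order, since Proposition~\ref{hybr.is.tree} establishes transitivity only after (and using) this lemma.
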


\begin{proof}
(a)--(e) are straightforward; (f) follows from (\hspace{0.3pt}b\hspace{-0.5pt}) of Lemma~\ref{lem.hybr}, (\hspace{-0.4pt}d) and (f) of Lemma~\ref{lem.graft}, and (e) of Definiton~\ref{def.graft};
(g) can be proved by using (a)--(f) of Lemma~\ref{lem.hybr}, (\hspace{-0.4pt}d) and (f) of Lemma~\ref{lem.graft}, and (e) of Definiton~\ref{def.graft}.
\end{proof}

First we show that a result of the hybrid operation is always a tree:

\begin{prop}\label{hybr.is.tree}
   Suppose that $\gamma$ is a consistent family of grafts for a tree $\mathcal{T}\hspace{-1pt}.$
   Then
    $\hybr(\mathcal{T}\hspace{-1pt},\gamma)$ is a tree.
\end{prop}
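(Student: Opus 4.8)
The plan is to verify the two defining properties of a tree for $\mathcal{H}=\hybr(\mathcal{T}\hspace{-1pt},\gamma)$: first that $<$ is a strict partial order (irreflexive and transitive) on $H$, and second that for each $x\in\nodes\mathcal{H}$ the set $x{\upspoon}_{\!\mathcal{H}}$ is well-ordered by $<_\mathcal{H}$. The most convenient engine for both parts is clause~(g) of Lemma~\ref{lem.hybr}, which describes $g{\upspoon}_{\!\mathcal{H}}$ for nodes $g\in\nodes\mathcal{G}$ as the disjoint union ${g}{\upspoon}_{\!\mathcal{G}}\sqcup({0}_\mathcal{G}){\upspoon}_{\!\mathcal{H}}$. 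I would also exploit the partition $H=\supp(\mathcal{T}\hspace{-1pt},\gamma)\sqcup\bigcup_{\mathcal{G}\in\gamma}\impl\mathcal{G}$ from clause~(a) of Definition~\ref{def.hybr} to do a case analysis on where each node lives.

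First I would establish \emph{irreflexivity and transitivity}. Rather than grinding through the five clauses (b1)--(b5) directly, I would lean on Remark~\ref{rem.hybg}: since $<$ is the transitive closure of $\vartriangleleft\coloneq(<_\mathcal{T}\cup\bigcup_{\mathcal{G}\in\gamma}<_\mathcal{G})\cap(H\times H)$, transitivity is automatic, and the real content is irreflexivity --- equivalently, that this transitive closure contains no cycle. (One must be careful not to cite Remark~\ref{rem.hybg} circularly, since its proof invokes this Proposition; so in practice I would prove irreflexivity of $<$ as defined by (b1)--(b5) directly.) The clean way is to define a rank function witnessing acyclicity: send each node $h\in\supp(\mathcal{T},\gamma)$ to its $\mathcal{T}$-position and each $i\in\impl\mathcal{G}$ to a position ``between'' ${0}_\mathcal{G}$ and $\maxel\mathcal{G}$, and check that every one of (b1)--(b5) strictly increases rank. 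The consistency conditions (b)--(c) of Definition~\ref{def.c.f.grafts}, together with Lemma~\ref{lem.graft}, ensure the implants do not overlap and are correctly situated, so no clause can send a node below itself.

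Next I would verify the \emph{well-ordering of predecessor sets}. Fix $x\in\nodes\mathcal{H}$ and split into two cases. If $x\in\impl\mathcal{G}\cup\{{0}_\mathcal{G}\}\subseteq\nodes\mathcal{G}$ for some $\mathcal{G}\in\gamma$, then clause~(g) of Lemma~\ref{lem.hybr} gives $x{\upspoon}_{\!\mathcal{H}}=x{\upspoon}_{\!\mathcal{G}}\sqcup({0}_\mathcal{G}){\upspoon}_{\!\mathcal{H}}$; here $x{\upspoon}_{\!\mathcal{G}}$ is well-ordered because $\mathcal{G}$ is a tree, ${0}_\mathcal{G}{\upspoon}_{\!\mathcal{H}}$ lives inside $\supp(\mathcal{T},\gamma)$ and is order-isomorphic via (b) of Lemma~\ref{lem.hybr} to the $\mathcal{T}$-predecessors of ${0}_\mathcal{G}$ (hence well-ordered since $\mathcal{T}$ is a tree), and every element of the second block lies below every element of the first, so the concatenation is again well-ordered. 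If instead $x\in\supp(\mathcal{T},\gamma)$, then I claim $x{\upspoon}_{\!\mathcal{H}}$ is carried into $\supp(\mathcal{T},\gamma)$ and matches $x{\upspoon}_{\!\mathcal{T}}$ under the order-isomorphism of Lemma~\ref{lem.hybr}(b); the point here is that no implant node can lie $<_\mathcal{H}$-below a support node — a node of $\impl\mathcal{G}$ only gains successors inside $\impl\mathcal{G}$ (b2) or up in $(\maxel\mathcal{G}){\downfootline}_\mathcal{T}$ (b4), never in $x{\upspoon}$ for a support node $x\leqslant_\mathcal{T}{0}_\mathcal{G}$ — which follows from Lemma~\ref{lem.hybr}(d)--(f).

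The main obstacle I anticipate is the irreflexivity/acyclicity verification across the ``cross'' clauses, especially clause~(b5), where a chain may weave through several distinct implants $\impl\mathcal{D},\impl\mathcal{E},\dots$ glued along their base-points ${0}_\mathcal{D}\in(\maxel\mathcal{E}){\downfootline}_\mathcal{T}$. Ruling out a cycle that returns to its start after passing through a sequence of grafts requires using the trichotomy in (c) of Definition~\ref{def.c.f.grafts} to impose a consistent $\mathcal{T}$-direction on the base-points, so that traversing from one graft to the next strictly advances in $\mathcal{T}$; the rank-function approach is precisely what tames this, since it converts the global acyclicity claim into a purely local ``each step increases rank'' check. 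I would therefore invest most of the effort in defining the rank carefully and confirming the strict increase on clause~(b5), treating (b1)--(b4) as the easier warm-up cases.
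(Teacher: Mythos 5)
Your proposal has two genuine gaps, one in each half. First, transitivity. The object you must prove to be a tree is $\hybr(\mathcal{T}\hspace{-1pt},\gamma)$, whose order is \emph{defined} by clauses (b1)--(b5) of Definition~\ref{def.hybr}, and that relation is not transitive by fiat: the paper spends the entire first part of its proof on a case analysis showing that a composition of two instances of (b1)--(b5) is again an instance of (b1)--(b5). Your plan to get transitivity ``automatically'' from Remark~\ref{rem.hybg} is circular (as you note), but your announced fallback --- ``prove irreflexivity of $<$ as defined by (b1)--(b5) directly'' --- repairs only irreflexivity. You are left with no argument that the relation (b1)--(b5) is transitive, nor that it equals the transitive closure of $\vartriangleleft$; establishing either of these non-circularly is exactly the case analysis you were trying to avoid. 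The rank-function idea, even if carried out, yields acyclicity, not transitivity.

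Second, and more seriously, your well-ordering argument rests on a claim that is false: ``no implant node can lie $<_\mathcal{H}$-below a support node.'' Clause (b4) of Definition~\ref{def.hybr} produces precisely such pairs: for ${x}\in\impl\mathcal{G}$ and ${y}\in\supp(\mathcal{T}\hspace{-1pt},\gamma)$ with ${y}\in(\hspace{-0.5pt}\maxel\mathcal{G}){\hspace{-1pt}\downfootline}_{\mathcal{T}}$ and ${x}<_{\mathcal{G}}\roott_{\mathcal{T}}({y},\maxel\mathcal{G}),$ one has ${x}<_\mathcal{H}{y}$ --- splicing the implant \emph{below} the support nodes lying above $\maxel\mathcal{G}$ is the whole point of the grafting (your own parenthetical, which grants implant nodes successors in $(\hspace{-0.5pt}\maxel\mathcal{G}){\hspace{-1pt}\downfootline}_{\mathcal{T}},$ contradicts the claim). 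Consequently, for a support node ${x}$ above some node of $\maxel\mathcal{G},$ the set ${x}{\upspoon}_{\!\mathcal{H}}$ contains implant nodes and is not carried into $\supp(\mathcal{T}\hspace{-1pt},\gamma),$ so your Case 2 fails; and Case 1 fails with it, because $(\hspace{-1pt}{0}_\mathcal{G}\hspace{-1.4pt})\hspace{-1pt}{\upspoon}_{\!\mathcal{H}}$ can likewise contain implant nodes of other grafts $\mathcal{D}$ with ${0}_\mathcal{G}\in(\hspace{-0.5pt}\maxel\mathcal{D}){\hspace{-1pt}\downfootline}_{\mathcal{T}}$ (clauses (b4)/(b5)), so it need not be order-isomorphic to $(\hspace{-1pt}{0}_\mathcal{G}\hspace{-1.4pt})\hspace{-1pt}{\upspoon}_{\!\mathcal{T}}.$ The true picture is that a predecessor set of a support node is its $\mathcal{T}\!$-predecessor chain with implant segments interleaved into the cut-out explant gaps; this is why the paper does not attempt your decomposition and instead proves the chain property and well-foundedness separately, extracting from an arbitrary nonempty ${A}\subseteq{z}{\upspoon}_{\!\mathcal{H}}$ the auxiliary set ${B}=\big({A}\cap\supp(\mathcal{T}\hspace{-1pt},\gamma)\big)\cup\{{0}_\mathcal{G}:{A}\cap\impl\mathcal{G}\neq\varnothing\}$ and taking $<_{\mathcal{T}}$-minimal elements there.
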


\begin{proof}
Let $\mathcal{H}\coloneq\hybr(\mathcal{T}\hspace{-1pt},\gamma).$
The irreflexivity of $<_\mathcal{H}$ is trivial, let us prove that ${x}<_\mathcal{H}{y}<_\mathcal{H}{z}$ implies ${x}<_\mathcal{H}{z}.$ We consider several cases\textup{:}

\begin{itemize}
\item[\textup{(i)\ }]
   $\mathsurround=0pt{z}\in\supp(\mathcal{T}\hspace{-1pt},\gamma).$
   \begin{itemize}
   \item[\textup{(i.1)\ }]
      $\mathsurround=0pt{y}\in\supp(\mathcal{T}\hspace{-1pt},\gamma).$

         The case ${x}\in\supp(\mathcal{T}\hspace{-1pt},\gamma)$  is trivial.
         If there is $\mathcal{D}\in\gamma$ such that ${x}\in\impl\mathcal{D},$
         then ${x}<_\mathcal{D}\roott_{\mathcal{T}}({y},\maxel\mathcal{D}).$
         Since ${y}<_{\mathcal{T}}{z},$ we have
         $\roott_{\mathcal{T}}({z},\maxel\mathcal{D})=\roott_{\mathcal{T}}({y},\maxel\mathcal{D}),$
         so
         ${x}<_\mathcal{H}{z}.$
   \item[\textup{(i.2)\ }]
      $\mathsurround=0pt\exists\hspace{0.5pt}\mathcal{E}\hspace{1pt}{\in}\,\gamma\ [\,{y}\in\impl\mathcal{E}\,].$

      The case ${x}\in\impl\mathcal{E}$ is trivial.
      If ${x}\notin\impl\mathcal{E},$ then ${x}\leqslant_{\mathcal{H}\!}{0}_{\mathcal{E}}$ by (e) of Lemma~\ref{lem.hybr} and ${0}_{\mathcal{E}}<_\mathcal{H}{z}$ by (c) of Lemma~\ref{lem.hybr}. Therefore ${x}\leqslant_{\mathcal{H}\!}{0}_{\mathcal{E}}<_\mathcal{H}{z}$
      and we may use (i.1), since ${0}_\mathcal{E}\in\supp(\mathcal{T}\hspace{-1pt},\gamma).$
   \end{itemize}
\item[\textup{(ii)\ }]
   $\mathsurround=0pt\exists\hspace{0.5pt}\mathcal{G}\hspace{1pt}{\in}\,\gamma\ [\,{z}\in\impl\mathcal{G}\,].$
   \begin{itemize}
   \item[\textup{(ii.1)\ }]
      $\mathsurround=0pt{y}\in\impl\mathcal{G}.$

         The case ${x}\in\impl\mathcal{G}$ is trivial.
         If ${x}\notin\impl\mathcal{G},$ then using (e) of Lemma~\ref{lem.hybr} twice, we get
         ${x}<_\mathcal{H}{z}.$
   \item[\textup{(ii.2)\ }]
      $\mathsurround=0pt{y}\notin\impl\mathcal{G}.$
      \\
      By (e) of Lemma~\ref{lem.hybr}, ${x}<_\mathcal{H}{y}\leqslant_{\mathcal{H}\!}{0}_\mathcal{G}.$
      Since ${0}_\mathcal{G}\in\supp(\mathcal{T}\hspace{-1pt},\gamma),$
      we have ${x}<_{\mathcal{H}\!}{0}_\mathcal{G}$ by (i),
      so ${x}<_\mathcal{H}{z}$ by (\hspace{-0.4pt}d) of Lemma~\ref{lem.hybr}.
   \end{itemize}
\end{itemize}
\medskip

Now we prove that for each ${z}\in\nodes\mathcal{H},$ the set ${z}{\upspoon}_{\!\mathcal{H}}$ is a chain in $\mathcal{H}.$ We must show that ${x},{y}\in{z}{\upspoon}_{\!\mathcal{H}}$ implies  ${x}\leqslant_{\mathcal{H}}{y}\ \ \mathsf{or}\ \ {x}>_{\mathcal{H}}{y}.$ Again, we consider several cases\textup{:}

\begin{itemize}
\item[\textup{(i)\ }]
   $\mathsurround=0pt{z}\in\supp(\mathcal{T}\hspace{-1pt},\gamma).$
   \begin{itemize}
   \item[\textup{(i.1)\ }]
      $\mathsurround=0pt{y}\in\supp(\mathcal{T}\hspace{-1pt},\gamma).$
      \begin{itemize}
      \item[\textup{(i.1.1)\ }]
         $\mathsurround=0pt{x}\in\supp(\mathcal{T}\hspace{-1pt},\gamma).$
         \\
         This case is trivial.
      \item[\textup{(i.1.2)\ }]
         $\mathsurround=0pt\exists\hspace{0.5pt}\mathcal{E}\hspace{1pt}{\in}\,\gamma\ [\,{x}\in\impl\mathcal{D}\,].$
         \\
         By (c) of Lemma~\ref{lem.hybr}, ${0}_\mathcal{D}\in{z}{\upspoon}_{\!\mathcal{H}},$
         so  ${0}_\mathcal{D}\in\supp(\mathcal{T}\hspace{-1pt},\gamma)$ implies
         ${y}\leqslant_{\mathcal{H}}\!{0}_\mathcal{D}\ \ \mathsf{or}\ \ {y}>_{\mathcal{H}}\!{0}_\mathcal{D}$ by  (i.1.1).
         If ${y}\leqslant_\mathcal{H}\!{0}_\mathcal{D},$ then
         ${y}<_\mathcal{H}{x}$ by (\hspace{-0.4pt}d) of Lemma~\ref{lem.hybr}.
         If ${y}>_\mathcal{H}\!{0}_\mathcal{D},$ then
         ${y}\in(\hspace{-0.5pt}\maxel\mathcal{D}){\hspace{-1pt}\downfootline}_{\mathcal{T}}\!$ by (\hspace{-0.4pt}d) of Lemma~\ref{lem.graft}.
         Then  $\roott_{\mathcal{T}}({y},\maxel\mathcal{D})=\roott_{\mathcal{T}}({z},\maxel\mathcal{D}),$
         so ${x}<_\mathcal{D}\roott_{\mathcal{T}}({y},\maxel\mathcal{D}),$
         whence ${x}<_\mathcal{H}{y}.$
      \end{itemize}
   \item[\textup{(i.2)\ }]
      $\mathsurround=0pt\exists\hspace{0.5pt}\mathcal{E}\hspace{1pt}{\in}\,\gamma\ [\,{y}\in\impl\mathcal{E}\,].$
      \begin{itemize}
      \item[\textup{(i.2.1)\ }]
         $\mathsurround=0pt{x}\in\supp(\mathcal{T}\hspace{-1pt},\gamma).$
         \\
         This case is the same as (i.1.2).
      \item[\textup{(i.2.2)\ }]
         $\mathsurround=0pt\exists\hspace{0.5pt}\mathcal{E}\hspace{1pt}{\in}\,\gamma\ [\,{x}\in\impl\mathcal{D}\,].$
         \\
         The case $\mathcal{D}=\mathcal{E}$ is trivial.
         If $\mathcal{D}\neq\mathcal{E},$ then by (c) of Lemma~\ref{lem.hybr},
         ${0}_\mathcal{D},{0}_\mathcal{E}\in{z}{\upspoon}_{\!\mathcal{H}},$
         so ${0}_\mathcal{D}\leqslant_{\mathcal{H}}{0}_\mathcal{E}\ \ \mathsf{or}\ \
         {0}_\mathcal{D}>_{\mathcal{H}}{0}_\mathcal{E}$
         by (i.1.1). By (f) of Lemma~\ref{lem.graft}, ${0}_\mathcal{D}\neq{0}_\mathcal{E},$
         so by (\hspace{-0.4pt}d) of Lemma~\ref{lem.graft} we may assume without loss of generality that
         ${0}_\mathcal{E}\in(\hspace{-0.5pt}\maxel\mathcal{D}){\hspace{-1pt}\downfootline}_{\mathcal{T}}.$
         Since ${0}_\mathcal{E}<_{\mathcal{T}}{z},$ we have
         $\roott_{\mathcal{T}}({0}_\mathcal{E},\maxel\mathcal{D})
         =\roott_{\mathcal{T}}({z},\maxel\mathcal{D}),$
         so ${x}<_{\mathcal{H}\!}{0}_\mathcal{E},$
         whence ${x}<_\mathcal{H}{y}$ by transitivity.
      \end{itemize}
   \end{itemize}
\item[\textup{(ii)\ }]
   $\mathsurround=0pt\exists\hspace{0.5pt}\mathcal{G}\hspace{1pt}{\in}\,\gamma\ [\,{z}\in\impl\mathcal{G}\,].$
   \begin{itemize}
   \item[\textup{(ii.1)\ }]
      $\mathsurround=0pt{y}\in\impl\mathcal{G}\,\ \mathsf{or}\ {x}\in\impl\mathcal{G}.$

      This case is similar to case (ii.1) from the proof of transitivity.
   \item[\textup{(ii.2)\ }]
      $\mathsurround=0pt{x},{y}\notin\impl\mathcal{G}.$
      \\
      By (e) of Lemma~\ref{lem.hybr}, ${x},{y}\in(\hspace{-1pt}{0}_\mathcal{G}\hspace{-1.4pt}){\upfilledspoon}_{\!\mathcal{H}}.$
      Then either $\{{x},{y}\}\cap\{\hspace{-1pt}{0}_\mathcal{G}\hspace{-1pt}\}\neq\varnothing$
      or ${x},{y}\in(\hspace{-1pt}{0}_\mathcal{G}\hspace{-1.4pt}){\upspoon}_{\!\mathcal{H}}$
      and the proof from (i) for ${z}\coloneq{0}_\mathcal{G}$ works.
   \end{itemize}
\end{itemize}
\medskip

It remains to prove that for each ${z}\in\nodes\mathcal{H}$ and each nonempty
${A}\subseteq {z}{\upspoon}_{\!\mathcal{H}},$ there is a $<_\mathcal{H}\mathsurround=0pt $-minimal node in ${A}.$ We consider several cases\textup{:}

\begin{itemize}
\item[\textup{(i)\ }]
   $\mathsurround=0pt{z}\in\supp(\mathcal{T}\hspace{-1pt},\gamma).$

   Consider a nonempty set
   $${B}\,\coloneq\,\big({A}\cap\hspace{0.4pt}\supp(\mathcal{T}\hspace{-1pt},\gamma)\big)
   \:\cup\,\{\,{0}_\mathcal{G}\,:\,\mathcal{G}\in\gamma
   \enskip\mathsf{and}\enskip{A}\cap\impl\mathcal{G}\neq\varnothing\,\}.$$

   We have $\!{B}\subseteq\supp(\mathcal{T}\hspace{-1pt},\gamma),$
   so it follows by (c) of Lemma~\ref{lem.hybr} that $\!{B}\subseteq {z}{\upfilledspoon}_{\!\mathcal{T}}.$
   Then there is a $<_{\mathcal{T}}\mathsurround=0pt$-minimal node ${m}$ in $\!{B}.$ Note that ${m}\in\supp(\mathcal{T}\hspace{-1pt},\gamma).$
   \begin{itemize}
   \item[\textup{(i.1)\ }]
      $\mathsurround=0pt{m}\in{A}.$

      Let us show that ${m}$ is a $<_\mathcal{H}\mathsurround=0pt$-minimal node of ${A}.$ Suppose ${x}\in{A}$ and ${x}\leqslant_\mathcal{H}{m}.$
      \begin{itemize}
      \item[\textup{(i.1.1)\ }]
      $\mathsurround=0pt{x}\in\supp(\mathcal{T}\hspace{-1pt},\gamma).$

      In this case ${x}\in{B}$ and ${x}\leqslant_{\mathcal{T}}{m},$ so ${x}={m}.$
      \item[\textup{(i.1.2)\ }]
      $\mathsurround=0pt\exists\hspace{0.5pt}\mathcal{E}\hspace{1pt}{\in}\,\gamma\ [\,{x}\in\impl\mathcal{D}\,].$

      By (c) of Lemma~\ref{lem.hybr}, ${0}_\mathcal{D}<_{\mathcal{T}}{m}.$
      But ${0}_\mathcal{D}\in{B},$ since ${x}\in{A}\cap\impl\mathcal{D}.$
      This contradicts the $<_{\mathcal{T}}\mathsurround=0pt$-minimality of ${m}$ in $\!{B} .$
      \end{itemize}
   \item[\textup{(i.2)\ }]
      $\mathsurround=0pt{m}\notin {A}.$

      In this case ${m}={0}_\mathcal{G}$ for some $\mathcal{G}\in\gamma$ such that ${A}\cap\impl\mathcal{G}\neq\varnothing.$
      Since ${A}$ is a chain in $\mathcal{H},$ it follows that
      ${A}\cap\impl\mathcal{G}$ is a chain in $\mathcal{G}.$
      Then it is not hard to prove that there is a $<_\mathcal{G}\mathsurround=0pt$-minimal node ${l}$ in ${A}\cap\impl\mathcal{G}.$
      Let us show that ${l}$ is a $<_\mathcal{H}\mathsurround=0pt$-minimal node of~${A}.$
      Suppose ${x}\in{A}$ and ${x}\leqslant_\mathcal{H} {l}.$
      \begin{itemize}
      \item[\textup{(i.2.1)\ }]
         $\mathsurround=0pt{x}\in\supp(\mathcal{T}\hspace{-1pt},\gamma).$
         \\
         Then ${x}<_\mathcal{H} {l}$ (since ${l}\in\impl\mathcal{G}\not\ni {x}$), so by (e) of Lemma~\ref{lem.hybr}, ${x}\leqslant_{\mathcal{T}}{0}_\mathcal{G}={m}.$
         Since ${x}\in{A}\not\ni {m},$ we have ${x}<_{\mathcal{T}}{m}$ and ${x}\in{B}.$
         This contradicts the $<_{\mathcal{T}}\mathsurround=0pt$-minimality of ${m}$ in~${B}.$
      \item[\textup{(i.2.2)\ }]
         $\mathsurround=0pt\exists\hspace{0.5pt}\mathcal{E}\hspace{1pt}{\in}\,\gamma\ [\,{x}\in\impl\mathcal{D}\,].$

         We have ${0}_\mathcal{D},{0}_\mathcal{G}\in{B}$ and
         ${0}_\mathcal{D}\leqslant_{\mathcal{H}\!}{0}_\mathcal{G}$ by (c) and (e) of Lemma~\ref{lem.hybr}. Then ${0}_\mathcal{D}={0}_\mathcal{G}$ by the $<_{\mathcal{T}}\mathsurround=0pt $-minimality of ${0}_\mathcal{G}={m}$ in $\!{B},$ so
         $\mathcal{D}=\mathcal{G}$ by (f) of Lemma~\ref{lem.graft}. This implies
         ${x}\in{A}\cap\impl\mathcal{G}$ and ${x}\leqslant_\mathcal{G}{l},$ so ${x}={l}$ by the
         $<_\mathcal{G}\mathsurround=0pt$-minimality of ${l}$ in ${A}\cap\impl\mathcal{G}.$
      \end{itemize}
   \end{itemize}
\item[\textup{(ii)\ }]
   $\mathsurround=0pt\exists\hspace{0.5pt}\mathcal{G}\hspace{1pt}{\in}\,\gamma\ [\,{z}\in\impl\mathcal{G}\,].$

   By (g) of Lemma~\ref{lem.hybr},\vspace{-1ex}
   $${A}\subseteq {z}{\upspoon}_{\!\mathcal{G}}
      \cup (\hspace{-1pt}{0}_\mathcal{G}\hspace{-1.4pt})\hspace{-1pt}{\upspoon}_{\!\mathcal{H}}.$$
      If
      ${A}\cap(\hspace{-1pt}{0}_\mathcal{G}\hspace{-1.4pt})\hspace{-1pt}{\upspoon}_{\!\mathcal{H}}\neq\varnothing,$
      then a $<_\mathcal{H}\mathsurround=0pt $-minimal node of
      ${A}\cap(\hspace{-1pt}{0}_\mathcal{G}\hspace{-1.4pt})\hspace{-1pt}{\upspoon}_{\!\mathcal{H}},$ which exists by (i), is a $<_\mathcal{H}\mathsurround=0pt $-minimal node of~${A}.$
      Otherwise,
      ${A}\subseteq {z}{\upspoon}_{\!\mathcal{G}},$
      and then a $<_\mathcal{G}\mathsurround=0pt $-minimal node of ${A}$ is a $<_\mathcal{H}\mathsurround=0pt $-minimal node of ${A}.$
   \end{itemize}
\end{proof}

Now we establish several properties of the hybrid operation:

\begin{prop}\label{prop.hybr}
   Suppose that $\gamma$ is a consistent family of grafts for a tree $\mathcal{T}\hspace{-1pt}$ and  $\mathcal{H}=\hybr(\mathcal{T}\hspace{-1pt},\gamma).$

   Then\textup{:}
   \begin{itemize}
   \item [\textup{(a)}]
      For each ${x}\in\nodes\mathcal{H},$
      \\
      $
      \sons_{\hspace{0.5pt}\mathcal{H}\hspace{-0.5pt}}(\hspace{-0.7pt}{x}\hspace{-1pt})=
      \begin{cases}
         \,\sons_{\hspace{0.5pt}\mathcal{G}\hspace{-0.5pt}}(\hspace{-0.7pt}{x}\hspace{-1pt}),
         &\text
         {if \ ${x}\in\{\hspace{-1pt}{0}_\mathcal{G}\hspace{-1pt}\}\cup\hspace{0.4pt}\impl\mathcal{G}\ $ for some
         $\ \mathcal{G}\in\gamma;$
          }
      \\ \,\sons_{\hspace{0.6pt}\mathcal{T}}(\hspace{-0.7pt}{x}\hspace{-1pt}),
         &\text
         {otherwise \textup{\big(}i.e., when ${x}\in\supp(\mathcal{T}\hspace{-1pt},\gamma)
         \setminus
         \{{0}_\mathcal{G}:\mathcal{G}\in\gamma\}\textup{\big)}.\vphantom{\tilde{\big\langle\rangle}}$}
      \end{cases}
      $
   \item [\textup{(\hspace{0.3pt}b\hspace{-1pt})}]
      If ${x},{y}\in\nodes\mathcal{H}$ and ${x}\:{\parallel}_{\!\mathcal{H}}\:{y},$

      then there are ${x}'\in{x}\hspace{-0.5pt}{\upfilledspoon}_{\!\mathcal{H}}\!$
      and ${y}'\in{y}{\upfilledspoon}_{\!\mathcal{H}}$ such that
      \begin{itemize}
      \item[\textup{(b1)}\ ]
         $\mathsurround=0pt\big[\,{x}'\!,{y}'\,{\in}\,\supp(\mathcal{T}\hspace{-1pt},\gamma)\ \mathsf{and}\ {x}'\,{\parallel}_{\!\mathcal{T}}\:{y}'\,\big]\quad
         \mathsf{or}$
      \item[\textup{(b2)}\ ]
         $\mathsurround=0pt\exists\hspace{0.5pt}\mathcal{G}\hspace{1pt}{\in}\,\gamma\,\big[\,
         {x}'\!,{y}'\,{\in}\hspace{1pt}\nodes\mathcal{G}\ \mathsf{and}\ {x}'\,{\parallel}_{\!\mathcal{G}}\:{y}'\,\big].$
      \end{itemize}
   \item [\textup{(c)}]
      If $\mathcal{T}\hspace{-1pt}$ has the least node, \\
      then $\mathcal{H}$ has the least node, ${0}_\mathcal{H}={0}_{\mathcal{T}}\!,$ and
      ${0}_\mathcal{H}\in\supp(\mathcal{T}\hspace{-1pt},\gamma).$
   \item [\textup{(\hspace{-0.4pt}d)}]
      If $\maxel\mathcal{T}=\varnothing,$\\
      then
      $\maxel\mathcal{H}=\varnothing.$
   \item [\textup{(e)}]
      If
      $\mathcal{T}\hspace{-1pt}$ is $\kappa\mathsurround=0pt $-branching and $\forall\mathcal{G}\,{\in}\,\gamma
      \;[\,\mathcal{G}$ is $\kappa\mathsurround=0pt $-branching\,$],$
      \\
      then $\mathcal{H}$ is $\kappa\mathsurround=0pt $-branching.
   \item [\textup{(f)}]
      If
      $\height\hspace{-0.4pt}\mathcal{T}\hspace{-1pt}\leqslant\omega$ and $\forall\mathcal{G}\,{\in}\,\gamma
      \:[\,\height\hspace{-0.4pt}\mathcal{G}\leqslant\omega\,],$
      \\
      then $\height\hspace{-0.4pt}\mathcal{H}\leqslant\omega.$
   \end{itemize}
\end{prop}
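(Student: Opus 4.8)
The plan is to prove the six parts by reducing everything to the order description in Definition~\ref{def.hybr} together with Lemmas~\ref{lem.graft} and~\ref{lem.hybr}, deducing (d) and (e) from (a). Throughout I write $\mathcal{H}$ for $\hybr(\mathcal{T},\gamma)$ and use Proposition~\ref{hybr.is.tree}, so that $<_{\mathcal{H}}$ is transitive and each clause (b1)--(b5) may be read as a direct description of $<_{\mathcal{H}}$. For part~(a) I would first record the auxiliary fact that $\sons_{\mathcal{T}}(x)\subseteq\supp(\mathcal{T},\gamma)$ whenever $x\in\supp(\mathcal{T},\gamma)\setminus\{0_{\mathcal{G}}:\mathcal{G}\in\gamma\}$: a $\mathcal{T}$-son lying in some explant would force $x$ to sit strictly between $0_{\mathcal{G}}$ and $(\maxel\mathcal{G})\downfootline_{\mathcal{T}}$, contradicting $x\sqsubset_{\mathcal{T}}s$ through Lemma~\ref{lem.graft}(d). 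Then I split on the two cases. When $x\in\{0_{\mathcal{G}}\}\cup\impl\mathcal{G}$, Lemma~\ref{lem.hybr}(a) identifies $<_{\mathcal{H}}$ with $<_{\mathcal{G}}$ on $\nodes\mathcal{G}$, while Lemma~\ref{lem.hybr}(g) shows that any $\mathcal{H}$-predecessor of a node of $\nodes\mathcal{G}$ lying strictly above $0_{\mathcal{G}}$ is again in $\nodes\mathcal{G}$; hence nothing can be inserted between $x$ and a $\mathcal{G}$-son, and conversely an $\mathcal{H}$-son of $x$ must land in $\nodes\mathcal{G}$, since if it were a support node or an implant node of another graft, the corresponding $\roott_{\mathcal{T}}$-value or graft root would lie strictly between $x$ and it. The case $x\in\supp(\mathcal{T},\gamma)\setminus\{0_{\mathcal{G}}\}$ is symmetric, using Lemma~\ref{lem.hybr}(b) and the auxiliary fact to keep the argument inside the support.

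For part~(b) I would push $x$ and $y$ down to canonical representatives. If both lie in the support then $x\parallel_{\mathcal{H}}y$ already gives $x\parallel_{\mathcal{T}}y$ by Lemma~\ref{lem.hybr}(b), so (b1) holds with $x'=x$, $y'=y$; if both lie in one implant, (b2) holds by Lemma~\ref{lem.hybr}(a). Otherwise I replace each implant node by the root of its graft and apply the trichotomy of Definition~\ref{def.c.f.grafts}(c). The incomparable-roots alternative gives (b1); in the remaining alternatives, say $0_{\mathcal{E}}\in(\maxel\mathcal{D})\downfootline_{\mathcal{T}}$, I pass to $r=\roott_{\mathcal{T}}(\,\cdot\,,\maxel\mathcal{D})\in\maxel\mathcal{D}\subseteq\nodes\mathcal{D}$, which is an $\mathcal{H}$-predecessor of the node coming from $\mathcal{E}$, and compare the implant node $x\in\impl\mathcal{D}$ with $r$ inside $\mathcal{D}$: since $r$ is $\mathcal{D}$-maximal, $x>_{\mathcal{D}}r$ is impossible, $x<_{\mathcal{D}}r$ would force $x<_{\mathcal{H}}y$ by clause (b4) or (b5) and contradict incomparability, so $x\parallel_{\mathcal{D}}r$ and (b2) holds.

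Parts (c)--(e) are then short. For (c), $0_{\mathcal{T}}\in\minel\mathcal{T}\subseteq\supp(\mathcal{T},\gamma)$ by Lemma~\ref{lem.graft}(b), and clauses (b1),(b3) give $0_{\mathcal{T}}<_{\mathcal{H}}h$ for every other $h$. For (d) and (e) I combine part~(a) with Lemma~\ref{about trees}(a): every node of $\mathcal{H}$ is a graft root, an implant node, or a non-root support node, and in each case its son-set equals a $\mathcal{G}$-son-set or a $\mathcal{T}$-son-set, the node being non-maximal exactly when this set is nonempty. A graft root is non-maximal in its graft because $|\nodes\mathcal{G}|>1$, and an implant node is non-maximal because $\impl\mathcal{G}$ is disjoint from $\maxel\mathcal{G}$; hence $\maxel\mathcal{T}=\varnothing$ propagates to $\maxel\mathcal{H}=\varnothing$, and $\kappa$-branching of $\mathcal{T}$ and of every $\mathcal{G}$ propagates to $\kappa$-branching of $\mathcal{H}$.

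The main obstacle is part~(f), because a graft of height $\leqslant\omega$ may still carry infinite chains, so one cannot bound $\height_{\mathcal{H}}$ fibrewise in a naive way. My plan is to define $\phi\colon\nodes\mathcal{H}\to\supp(\mathcal{T},\gamma)$ by $\phi(v)=v$ on the support and $\phi(v)=0_{\mathcal{D}}$ for $v\in\impl\mathcal{D}$, and to verify against (b1)--(b5), using $\maxel\mathcal{G}\subseteq(0_{\mathcal{G}})\downspoon_{\mathcal{T}}$, that $\phi$ is monotone: $v<_{\mathcal{H}}w$ implies $\phi(v)\leqslant_{\mathcal{T}}\phi(w)$. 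Fixing $h$ and the chain $C=h\upspoon_{\mathcal{H}}$, monotonicity makes $\phi(C)\subseteq\phi(h)\upfilledspoon_{\mathcal{T}}$ finite (as $\height\mathcal{T}\leqslant\omega$) and each fibre convex in $C$. A fibre over a non-root value is a singleton, and a fibre over a root $0_{\mathcal{D}}$ equals $C\cap\impl\mathcal{D}$; here is the crux. If this interval has a largest element, or if $h\in\nodes\mathcal{D}$, its finiteness is immediate from Lemma~\ref{lem.hybr}(a),(g) and $\height\mathcal{D}\leqslant\omega$. Otherwise I take the least $\mathcal{H}$-upper bound $u$ of the interval in $h\upfilledspoon_{\mathcal{H}}$ and use Lemma~\ref{lem.hybr}(c),(f) (or clause (b5) when $u$ is itself an implant node) to produce a single $m\in\maxel\mathcal{D}$ with $v<_{\mathcal{D}}m$ for every $v$ in the fibre, whence the fibre is contained in the finite set $m\upspoon_{\mathcal{D}}$. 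Thus $C$ is a finite union of finite fibres, so $\height_{\mathcal{H}}(h)<\omega$ for all $h$ and $\height\mathcal{H}\leqslant\omega$.
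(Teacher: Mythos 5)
Your proposal is correct and follows essentially the same route as the paper's own proof: parts (a)--(e) run the same case analyses through Lemmas~\ref{lem.graft} and~\ref{lem.hybr}, and your projection map $\phi$ in part (f) merely repackages the paper's decomposition of $h{\upspoon}_{\mathcal{H}}$ into its support part (finite because it embeds into $h{\upspoon}_{\mathcal{T}}$, which also contains the roots of all contributing grafts, so that only finitely many grafts contribute by Lemma~\ref{lem.graft}(f)) and its per-graft parts (each finite because clauses (b4)--(b5) of Definition~\ref{def.hybr} place it below the single node $\roott_{\mathcal{T}}(\,\cdot\,,\maxel\mathcal{D})$, whose $\mathcal{D}$-up-set is finite since $\height\mathcal{D}\leqslant\omega$). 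In particular, the ``crux'' you isolate --- that grafts of height $\leqslant\omega$ may contain infinite chains --- is handled by the paper in exactly the same way, by pinning each graft's contribution under one maximal node of that graft rather than by any bounded-chains hypothesis.
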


\begin{proof}
(a) Suppose ${x}\in\nodes\mathcal{H}.$ We consider two cases:
\smallskip

\textit{\hspace{-1pt}Case 1.} $\exists\hspace{0.5pt}\mathcal{G}\hspace{1pt}{\in}\,\gamma\ \big[\,{x}\in\{\hspace{-1pt}{0}_\mathcal{G}\hspace{-1pt}\}\cup\hspace{0.4pt}\impl\mathcal{G}\,\big].$

   First we prove $\sons_{\hspace{0.5pt}\mathcal{H}\hspace{-0.5pt}}(\hspace{-0.7pt}{x}\hspace{-1pt})\supseteq
   \sons_{\hspace{0.5pt}\mathcal{G}\hspace{-0.5pt}}(\hspace{-0.7pt}{x}\hspace{-1pt}).$
   If not, then there is ${s}\in\hspace{0.4pt}\sons_{\hspace{0.5pt}\mathcal{G}\hspace{-0.5pt}}(\hspace{-0.7pt}{x}\hspace{-1pt})\setminus
   \sons_{\hspace{0.5pt}\mathcal{H}\hspace{-0.5pt}}(\hspace{-0.7pt}{x}\hspace{-1pt}).$
   Then ${x}<_\mathcal{H}{s}$ by (a) of Lemma~\ref{lem.hybr},
   so ${s}\notin\sons_{\hspace{0.5pt}\mathcal{H}\hspace{-0.5pt}}(\hspace{-0.7pt}{x}\hspace{-1pt})$ implies there is ${v}\in({x},{s})_\mathcal{H}.$
   We have  ${v}\notin(\hspace{-1pt}{0}_\mathcal{G}\hspace{-1.4pt})\hspace{-1pt}{\upspoon}_{\!\mathcal{H}},$
   so ${v}\in{s}{\upspoon}_{\!\mathcal{G}}\subseteq\nodes\mathcal{G}$  by (g) of Lemma~\ref{lem.hybr},
   whence ${v}\in({x},{s})_\mathcal{G}.$  This contradicts ${s}\in\hspace{0.4pt}\sons_{\hspace{0.5pt}\mathcal{G}\hspace{-0.5pt}}(\hspace{-0.7pt}{x}\hspace{-1pt}).$

   Now we prove $\sons_{\hspace{0.5pt}\mathcal{H}\hspace{-0.5pt}}(\hspace{-0.7pt}{x}\hspace{-1pt})
   \subseteq\hspace{0.4pt}\sons_{\hspace{0.5pt}\mathcal{G}\hspace{-0.5pt}}(\hspace{-0.7pt}{x}\hspace{-1pt}).$
   If not, then there is ${s}\in\hspace{0.4pt}\sons_{\hspace{0.5pt}\mathcal{H}\hspace{-0.5pt}}(\hspace{-0.7pt}{x}\hspace{-1pt})
   \setminus\sons_{\hspace{0.5pt}\mathcal{G}\hspace{-0.5pt}}(\hspace{-0.7pt}{x}\hspace{-1pt}).$ We consider several subcases\textup{:}
\begin{itemize}
\item[\textup{(i)\ }]
   $\mathsurround=0pt{x}\in\impl\mathcal{G}.$
   \begin{itemize}
   \item[\textup{(i.1)\ }]
      $\mathsurround=0pt{s}\in\nodes\mathcal{G}.$

      Then ${x}<_\mathcal{G}{s},$ so $({x},{s})_\mathcal{G}\neq\varnothing,$
      whence $({x},{s})_\mathcal{H}\neq\varnothing$ by (a) of Lemma~\ref{lem.hybr}.
      This contradicts ${s}\in\hspace{0.4pt}\sons_{\hspace{0.5pt}\mathcal{H}\hspace{-0.5pt}}(\hspace{-0.7pt}{x}\hspace{-1pt}).$
   \item[\textup{(i.2)\ }]
      $\mathsurround=0pt{s}\notin\nodes\mathcal{G}.$

      Then ${x}<_{\mathcal{H}}{s}$ implies ${x}<_{\mathcal{H}}{r}<_\mathcal{H}{s}$ for some ${r}\in\maxel\mathcal{G}.$
      This contradicts  ${s}\in\hspace{0.4pt}\sons_{\hspace{0.5pt}\mathcal{H}\hspace{-0.5pt}}(\hspace{-0.7pt}{x}\hspace{-1pt}).$
   \end{itemize}
\item[\textup{(ii)\ }]
   $\mathsurround=0pt{x}={0}_\mathcal{G}.$
   \begin{itemize}
   \item[\textup{(ii.1)\ }]
      $\mathsurround=0pt{s}\in\impl\mathcal{G}.$

      This case is similar to (i.1).
   \item[\textup{(ii.2)\ }]
      $\mathsurround=0pt{s}\notin\impl\mathcal{G}.$

     Then ${s}\in\hspace{0.4pt}\sons_{\hspace{0.5pt}\mathcal{H}\hspace{-0.5pt}}(\hspace{-1pt}{0}_\mathcal{G}\hspace{-1.4pt})$
     with (f) of Lemma~\ref{lem.hybr} imply ${s}\in\maxel\mathcal{G},$
     so $({0}_\mathcal{G},{s})_\mathcal{H}=\varnothing$
     implies $({0}_\mathcal{G},{s})_\mathcal{G}=\varnothing.$
     This contradicts ${s}\notin\sons_{\hspace{0.5pt}\mathcal{G}\hspace{-0.5pt}}(\hspace{-1pt}{0}_\mathcal{G}\hspace{-1.4pt}).$
   \end{itemize}
\end{itemize}
\smallskip

\textit{\hspace{-1pt}Case 2.}  ${x}\in\supp(\mathcal{T}\hspace{-1pt},\gamma)\setminus
   \{{0}_\mathcal{G}:\mathcal{G}\in\gamma\}.$

   First we prove $\sons_{\hspace{0.5pt}\mathcal{H}\hspace{-0.5pt}}(\hspace{-0.7pt}{x}\hspace{-1pt})\subseteq
   \hspace{0.4pt}\sons_{\hspace{0.6pt}\mathcal{T}}(\hspace{-0.7pt}{x}\hspace{-1pt}).$
   If not, then there is ${s}\in\hspace{0.4pt}\sons_{\hspace{0.5pt}\mathcal{H}\hspace{-0.5pt}}(\hspace{-0.7pt}{x}\hspace{-1pt})
   \setminus\sons_{\hspace{0.6pt}\mathcal{T}}(\hspace{-0.7pt}{x}\hspace{-1pt}).$ We consider two subcases\textup{:}
   \begin{itemize}
   \item[\textup{(i)\ }]
      $\mathsurround=0pt
      {s}\notin\supp(\mathcal{T}\hspace{-1pt},\gamma).$

      Then there is $\mathcal{E}\in\gamma$ such that ${s}\in\impl\mathcal{E}.$
      Then ${x}\leqslant_{\mathcal{H}\!}{0}_\mathcal{E}<_\mathcal{H}{s}$ by (e) of Lemma~\ref{lem.hybr},
      so ${x}<_{\mathcal{H}\!}{0}_\mathcal{E}<_\mathcal{H}{s}$ by Case~2.
      This contradicts ${s}\in\hspace{0.4pt}\sons_{\hspace{0.5pt}\mathcal{H}\hspace{-0.5pt}}(\hspace{-0.7pt}{x}\hspace{-1pt}).$
   \item[\textup{(ii)\ }]
      $\mathsurround=0pt{s}\in\supp(\mathcal{T}\hspace{-1pt},\gamma).$

      Then ${x}<{_{\mathcal{T}}}{s},$ so
      ${s}\notin\sons_{\hspace{0.6pt}\mathcal{T}}(\hspace{-0.7pt}{x}\hspace{-1pt})$ implies there is ${v}\in({x},{s})_{\mathcal{T}}.$
      Since $({x},{s})_{\mathcal{H}} =\varnothing,$ we have
      ${v}\notin\supp(\mathcal{T}\hspace{-1pt},\gamma),$ so there is $\mathcal{E}\in\gamma$
      such that ${v}\in\expl(\mathcal{T}\hspace{-1pt},\mathcal{E}).$
      Then ${x}\leqslant_{\mathcal{T}}{0}_\mathcal{E}<_{\mathcal{T}}{v}<_{\mathcal{T}}{s}$
      by (e) of Lemma~\ref{lem.graft},
      so ${x}\leqslant_{\mathcal{H}\!}{0}_\mathcal{E}<_\mathcal{H}{s},$
      whence    ${x}<_{\mathcal{H}\!}{0}_\mathcal{E}<_\mathcal{H}{s}$ by Case~2.
      This contradicts ${s}\in\hspace{0.4pt}\sons_{\hspace{0.5pt}\mathcal{H}\hspace{-0.5pt}}(\hspace{-0.7pt}{x}\hspace{-1pt}).$
   \end{itemize}

   Now we prove $\sons_{\hspace{0.5pt}\mathcal{H}\hspace{-0.5pt}}(\hspace{-0.7pt}{x}\hspace{-1pt})
   \supseteq\sons_{\hspace{0.6pt}\mathcal{T}}(\hspace{-0.7pt}{x}\hspace{-1pt}).$
   If not, then there is ${s}\in\hspace{0.4pt}\sons_{\hspace{0.6pt}\mathcal{T}}(\hspace{-0.7pt}{x}\hspace{-1pt})\setminus
   \sons_{\hspace{0.5pt}\mathcal{H}\hspace{-0.5pt}}(\hspace{-0.7pt}{x}\hspace{-1pt}).$ Again, there are two subcases\textup{:}
   \begin{itemize}
   \item[\textup{(i)\ }]
      $\mathsurround=0pt{s}\notin\supp(\mathcal{T}\hspace{-1pt},\gamma).$

      Then there is $\mathcal{E}\in\gamma$ such that ${s}\in\expl({\mathcal{T}\hspace{-1pt},\mathcal{E}}).$
      Then  ${x}\leqslant_{\mathcal{T}}{0}_\mathcal{E}<_{\mathcal{T}}{s}$ by (e) of Lemma~\ref{lem.graft},
      so  ${x}<_{\mathcal{T}}{0}_\mathcal{E}<_{\mathcal{T}}{s}$ by Case~2.
      This contradicts ${s}\in\hspace{0.4pt}\sons_{\hspace{0.6pt}\mathcal{T}}(\hspace{-0.7pt}{x}\hspace{-1pt}).$
   \item[\textup{(ii)\ }]
      $\mathsurround=0pt{s}\in\supp(\mathcal{T}\hspace{-1pt},\gamma).$

      Then ${x}<_\mathcal{H}{s},$ so
      ${s}\notin\sons_{\hspace{0.5pt}\mathcal{H}\hspace{-0.5pt}}(\hspace{-0.7pt}{x}\hspace{-1pt})$ implies there is ${v}\in({x},{s})_\mathcal{H}.$
      Since $({x},{s})_{\mathcal{T}} =\varnothing,$
      we have ${v}\notin\supp(\mathcal{T}\hspace{-1pt},\gamma),$ so
      there is $\mathcal{E}\in\gamma$ such that ${v}\in\impl\mathcal{E}.$
      Then ${x}\leqslant_{\mathcal{H}\!}{0}_\mathcal{E}<_\mathcal{H}{v}<_\mathcal{H}{s}$
      by (e) of Lemma~\ref{lem.hybr},
      so ${x}\leqslant_{\mathcal{T}}{0}_\mathcal{E}<_{\mathcal{T}}{s},$
      whence ${x}<_{\mathcal{T}}{0}_\mathcal{E}<_{\mathcal{T}}{s}$ by Case~2.
      This contradicts ${s}\in\hspace{0.4pt}\sons_{\hspace{0.6pt}\mathcal{T}}(\hspace{-0.7pt}{x}\hspace{-1pt}).$
   \end{itemize}
\medskip

(\hspace{0.3pt}b\hspace{-0.5pt}) Suppose ${x},{y}\in\nodes\mathcal{H}$ and ${x}\:{\parallel}_{\!\mathcal{H}}\:{y}.$
We consider several cases\textup{:}
\begin{itemize}
\item[\textup{(i)\ }]
   $\mathsurround=0pt{x},{y}\in\supp(\mathcal{T}\hspace{-1pt},\gamma).$

   Then by (\hspace{0.3pt}b\hspace{-0.5pt}) of Lemma~\ref{lem.hybr}, ${x}'\coloneq {x}$ and ${y}'\coloneq {y}$ satisfy (b1) of Proposition~\ref{prop.hybr}.
\item[\textup{(ii)\ }]
   $\mathsurround=0pt\big|\,\{{x},{y}\}\cap\hspace{0.4pt}\supp(\mathcal{T}\hspace{-1pt},\gamma)\,\big|\,=\,1.$

   We may assume without loss of generality that
   ${x}\in\supp(\mathcal{T}\hspace{-1pt},\gamma)$ and ${y}\notin\supp(\mathcal{T}\hspace{-1pt},\gamma).$
   Then there is $\mathcal{G}\,{\in}\,\gamma$ such that ${y}\in\impl\mathcal{G}.$
   \begin{itemize}
   \item[\textup{(ii.1)\ }]
      $\mathsurround=0pt{x}\:{\parallel}_{\!\mathcal{H}}\:{0}_{\mathcal{G}}.$

   Then ${x}'\coloneq {x}$ and ${y}'\coloneq {0}_{\mathcal{G}}$
   satisfy (b1) of Proposition~\ref{prop.hybr}.
   \item[\textup{(ii.2)\ }]
      $\mathsurround=0pt{x}\leqslant_{\mathcal{H}\!}{0}_{\mathcal{G}}.$

      Then ${x}\leqslant_\mathcal{H} {y},$ which contradicts ${x}\:{\parallel}_{\!\mathcal{H}}\:{y}.$

   \item[\textup{(ii.3)\ }]
      $\mathsurround=0pt{x}>_{\mathcal{H}\!}{0}_{\mathcal{G}}.$

      Then by (f) of Lemma~\ref{lem.hybr},
      ${x}\in(\hspace{-0.5pt}\maxel\mathcal{G}){\hspace{-1pt}\downfootline}_\mathcal{H}.$
      Let ${r}\coloneq\roott_\mathcal{H}({x},\maxel\mathcal{G}).$
      We have ${r}\:{\parallel}_{\!\mathcal{G}}\:{y}$
      (else ${r}\geqslant_\mathcal{G}{y},$ which contradicts ${x}\:{\parallel}_{\!\mathcal{H}}\:{y}\text{)},$
      so ${x}'\coloneq {r}$ and ${y}'\coloneq {y}$ satisfy (b2) of Proposition~\ref{prop.hybr}.
   \end{itemize}
\item[\textup{(iii)\ }]
   $\mathsurround=0pt{x},{y}\notin\supp(\mathcal{T}\hspace{-1pt},\gamma).$

   Then there are $\mathcal{D},\mathcal{E}\,{\in}\,\gamma$ such that ${x}\in\impl\mathcal{D}$
   and ${y}\in\impl\mathcal{E}.$
   \begin{itemize}
   \item[\textup{(iii.1)\ }]
      $\mathsurround=0pt\mathcal{D}=\mathcal{E}.$

      Then by (a) of Lemma~\ref{lem.hybr}, ${x}'\coloneq {x}$ and ${y}'\coloneq {y}$ satisfy (b2) of Proposition~\ref{prop.hybr}.
   \item[\textup{(iii.2)\ }]
      $\mathsurround=0pt\mathcal{D}\neq\mathcal{E}\ \,\mathsf{and}\ \,
      {0}_{\mathcal{D}}\:{\parallel}_{\!\mathcal{H}}\:{0}_{\mathcal{E}}.$

      Then ${x}'\coloneq {0}_{\mathcal{D}}$ and ${y}'\coloneq {0}_{\mathcal{E}}$ satisfy (b1) of Proposition~\ref{prop.hybr}.
   \item[\textup{(iii.3)\ }]
      $\mathsurround=0pt\mathcal{D}\neq\mathcal{E}\ \,\mathsf{and}\ \,
      {0}_{\mathcal{D}}\:{\nparallel}_{\!\mathcal{H}}\:{0}_{\mathcal{E}}.$

      Then by (c) of Definition~\ref{def.c.f.grafts} we may assume without loss of generality that
      ${0}_\mathcal{E}\in(\hspace{-0.5pt}\maxel\mathcal{D}){\hspace{-1pt}\downfootline}_{\mathcal{T}}.$
      We have ${x}\:{\parallel}_{\!\mathcal{H}}\:{0}_{\mathcal{E}}$ ---
      otherwise ${x}\leqslant_{\mathcal{H}\!}{0}_{\mathcal{E}},$
      which contradicts ${x}\:{\parallel}_{\!\mathcal{H}}\:{y},$
      or ${x}>_{\mathcal{H}\!}{0}_{\mathcal{E}},$
      which contradicts ${0}_{\mathcal{E}}\in(\hspace{-0.5pt}\maxel\mathcal{D}) {\downfootline}_{\mathcal{T}}.$
      Let us consider ${x}_1\coloneq{x}$ and ${y}_1\coloneq{0}_{\mathcal{E}}.$
      Then ${x}_1\,{\parallel}_{\!\mathcal{H}}\:{y}_1$
      and $\big|\{{x}_1,{y}_1\}\cap\hspace{0.4pt}\supp(\mathcal{T}\hspace{-1pt},\gamma)\big|=1,$
      so by (ii) there are corresponding ${x}'_1\!\in{x}_1\!{\upfilledspoon}_{\!\mathcal{H}}\!$
      and ${y}'_1\!\in{y}_1\!{\upfilledspoon}_{\!\mathcal{H}}.$
      Then ${x}'\coloneq {x}'_1\!\in{x}\hspace{-0.5pt}{\upfilledspoon}_{\!\mathcal{H}}$
      and ${y}'\coloneq {y}'_1\!\in{y}{\upfilledspoon}_{\!\mathcal{H}}$ satisfy (b1) or (b2) of Proposition~\ref{prop.hybr}.
   \end{itemize}
\end{itemize}
\medskip

(c) Suppose $\mathcal{T}\hspace{-1pt}$ has the least node.
Then ${0}_{\mathcal{T}}\!\in\supp(\mathcal{T}\hspace{-1pt},\gamma)$ by (\hspace{0.3pt}b\hspace{-0.5pt}) of Lemma~\ref{lem.graft},
therefore ${0}_{\mathcal{T}}$ is the least node of $\mathcal{H}$ by (\hspace{0.3pt}b\hspace{-0.5pt}) and (\hspace{-0.4pt}d) of Lemma~\ref{lem.hybr}.
\medskip

(\hspace{-0.4pt}d) Suppose $\maxel\mathcal{T}=\varnothing.$
Let ${x}\in\nodes\mathcal{H}.$
If ${x}\in\supp(\mathcal{T}\hspace{-1pt},\gamma)\setminus
\{{0}_\mathcal{G}:\mathcal{G}\in\gamma\},$
then $\sons_{\hspace{0.5pt}\mathcal{H}\hspace{-0.5pt}}(\hspace{-0.7pt}{x}\hspace{-1pt})\neq\varnothing$ by (a) of Proposition~\ref{prop.hybr} and by (a) of Lemma~\ref{about trees},
hence ${x}\notin\maxel\mathcal{H}.$
If  ${x}\in\{\hspace{-1pt}{0}_\mathcal{G}\hspace{-1pt}\}\cup\hspace{0.4pt}\impl\mathcal{G}$
for some $\mathcal{G}\in\gamma,$
then ${x}\notin\maxel\mathcal{G}$ by (a) of Lemma~\ref{lem.graft},
so ${x}\notin\maxel\mathcal{H}$ by (a) of Lemma~\ref{lem.hybr}.
\medskip

(e) Suppose  $\mathcal{T}\hspace{-1pt}$ is $\kappa\mathsurround=0pt $-branching and for each $\mathcal{G}\,{\in}\,\gamma,$ the $\mathcal{G}$ is $\kappa\mathsurround=0pt $-branching.
Then $\mathcal{H}$ is $\kappa\mathsurround=0pt $-branching by (a) of Proposition~\ref{prop.hybr} and by (a) of Lemma~\ref{about trees}.
\medskip

(f) Suppose $\height\hspace{-0.4pt}\mathcal{T}\hspace{-1pt}\leqslant\omega$ and for each $\mathcal{G}\in\gamma,$
we have $\height\hspace{-0.4pt}\mathcal{G}\leqslant\omega.$ It is enough to prove that for each ${x}\in\nodes\mathcal{H},$ the ${x}\hspace{-0.3pt}{\upspoon}_{\!\mathcal{H}}$ is finite.

If ${x}\in\supp(\mathcal{T}\hspace{-1pt},\gamma),$
then ${x}\hspace{-0.3pt}{\upspoon}_{\!\mathcal{H}}\cap\supp(\mathcal{T}\hspace{-1pt},\gamma)\subseteq
{x}\hspace{-0.3pt}{\upspoon}_{\!\mathcal{T}},$
so ${x}\hspace{-0.3pt}{\upspoon}_{\!\mathcal{H}}\cap\supp(\mathcal{T}\hspace{-1pt},\gamma)$ is finite.
Suppose ${G}\in\gamma.$
If ${x}\hspace{-0.3pt}{\upspoon}_{\!\mathcal{H}}\cap\impl\mathcal{G}\neq\varnothing,$
then ${0}_\mathcal{G}\in{x}\hspace{-0.3pt}{\upspoon}_{\!\mathcal{H}}$ by (c) of Lemma~\ref{lem.hybr},
so ${0}_\mathcal{G}\in{x}\hspace{-0.3pt}{\upspoon}_{\!\mathcal{T}}.$
Then ${x}\in(\hspace{-0.5pt}\maxel\mathcal{G}){\hspace{-1pt}\downfootline}_{\mathcal{T}}$ by (\hspace{-0.4pt}d) of Lemma~\ref{lem.graft},
so ${x}\hspace{-0.3pt}{\upspoon}_{\!\mathcal{H}}\cap\impl\mathcal{G}\subseteq
\big(\!\roott_{\mathcal{T}}({x},\maxel\mathcal{G})\big){\upspoon}_{\!\mathcal{G}}$
by (b4) of Definition~\ref{def.hybr}.
This means that ${x}\hspace{-0.3pt}{\upspoon}_{\!\mathcal{H}}\cap\impl\mathcal{G}$ is finite, since
${v}\hspace{-0.6pt}{\upspoon}_{\!\mathcal{G}}$ is finite for every ${v}\in\nodes\mathcal{G}.$
So it is enough to show that the set $\{\mathcal{G}\in\gamma:{0}_\mathcal{G}\in{x}\hspace{-0.3pt}{\upspoon}_{\!\mathcal{T}}\}$
is finite. Since ${x}\hspace{-0.3pt}{\upspoon}_{\!\mathcal{T}}$ is finite, the (f) of Lemma~\ref{lem.graft} implies that this is indeed the case.

If ${x}\in\impl\mathcal{G}$ for some $\mathcal{G}\in\gamma,$
then ${x}\hspace{-0.3pt}{\upspoon}_{\!\mathcal{H}}={x}\hspace{-0.3pt}{\upspoon}_{\!\mathcal{G}}\cup
(\hspace{-1pt}{0}_\mathcal{G}\hspace{-1.4pt})\hspace{-1pt}{\upspoon}_{\!\mathcal{H}}$ by (g) of Lemma~\ref{lem.hybr}.
Since ${0}_\mathcal{G}\in\supp(\mathcal{T}\hspace{-1pt},\gamma),$ the $(\hspace{-1pt}{0}_\mathcal{G}\hspace{-1.4pt})\hspace{-1pt}{\upspoon}_{\!\mathcal{H}}$ is finite by the above,
therefore ${x}\hspace{-0.3pt}{\upspoon}_{\!\mathcal{H}}$ is finite.
\end{proof}

Finally we establish two properties of branches in $\hybr(\mathcal{T}\hspace{-1pt},\gamma):$

\begin{lem}\label{lem.reb.hybr}
   Suppose that  $\gamma$ is a consistent family of grafts for a tree $\mathcal{T}\hspace{-1pt}$
   and ${B}$ is a branch in $\hybr(\mathcal{T}\hspace{-1pt},\gamma).$
   Then\textup{:}

   \begin{itemize}
   \item [\textup{(a)}]
      If $\mathcal{G}\,{\in}\,\gamma$ and ${B}\cap\nodes\mathcal{G}\neq\varnothing,$

      then
      ${B}\cap\nodes\mathcal{G}$ is a branch in $\mathcal{G}.$
   \item [\textup{(\hspace{0.3pt}b\hspace{-1pt})}]
      If every graft in $\gamma$ has bounded chains,

      then
      ${B}\cap\supp(\mathcal{T}\hspace{-1pt},\gamma)$ is $\mathsurround=0pt\,\hybr(\mathcal{T}\hspace{-1pt},\gamma)$-cofinal in $\!{B}.$
   \end{itemize}
\end{lem}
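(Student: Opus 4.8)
The plan is to prove part~(a) first and then obtain part~(\hspace{0.3pt}b\hspace{-0.5pt}) as a short consequence. Write $\mathcal{H}=\hybr(\mathcal{T}\hspace{-1pt},\gamma),$ fix $\mathcal{G}\in\gamma$ with $B\cap\nodes\mathcal{G}\neq\varnothing,$ and set $C\coloneq B\cap\nodes\mathcal{G}.$ For (a) I must show that $C$ is a $\subseteq$-maximal chain in $\mathcal{G}.$ That $C$ is a chain in $\mathcal{G}$ is immediate: any two of its nodes are comparable in $\mathcal{H}$ because $B$ is a chain, and by (a) of Lemma~\ref{lem.hybr} the order $<_\mathcal{G}$ is exactly the restriction of $<_\mathcal{H}$ to $\nodes\mathcal{G},$ so they are comparable in $\mathcal{G}.$ I would also record at the outset that ${0}_\mathcal{G}\in B$: choosing any $g_0\in C$ we have ${0}_\mathcal{G}\leqslant_\mathcal{G}g_0,$ hence ${0}_\mathcal{G}\leqslant_\mathcal{H}g_0,$ and since branches are downward closed (by (f) of Lemma~\ref{about trees}) this puts ${0}_\mathcal{G}\in g_0{\upfilledspoon}_{\mathcal{H}}\subseteq B,$ so ${0}_\mathcal{G}\in C.$

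For maximality I would argue by contradiction. Suppose some $g\in\nodes\mathcal{G}\setminus C$ is comparable in $\mathcal{G}$ to every node of $C;$ then $g\in\nodes\mathcal{H}\setminus B,$ so by (d) of Lemma~\ref{about trees} there is $b\in B$ with $b\parallel_{\mathcal{H}}g.$ Since ${0}_\mathcal{G}\in C$ and $g\neq{0}_\mathcal{G},$ we have ${0}_\mathcal{G}<_\mathcal{H}g;$ comparing $b$ with ${0}_\mathcal{G}$ inside the chain $B$ then forces $b>_\mathcal{H}{0}_\mathcal{G}$ (otherwise $b\leqslant_\mathcal{H}{0}_\mathcal{G}<_\mathcal{H}g$ contradicts $b\parallel_{\mathcal{H}}g$). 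Moreover $b\notin\nodes\mathcal{G},$ for otherwise $b\in C$ and then $b,g$ would be comparable in $\mathcal{G}.$ Now $b\notin\impl\mathcal{G}$ together with $b>_\mathcal{H}{0}_\mathcal{G}$ lets me invoke (f) of Lemma~\ref{lem.hybr}, giving $b\in(\maxel\mathcal{G}){\downfootline}_\mathcal{H},$ so there is $r\in\maxel\mathcal{G}$ with $r\leqslant_\mathcal{H}b.$ Downward closedness of branches again yields $r\in b{\upfilledspoon}_{\mathcal{H}}\subseteq B,$ and $r\in\nodes\mathcal{G},$ hence $r\in C.$ Finally $r$ is maximal in $\mathcal{G}$ and $g\neq r,$ so comparability of $g$ with $r$ gives $g<_\mathcal{G}r,$ whence $g<_\mathcal{H}r\leqslant_\mathcal{H}b;$ this contradicts $b\parallel_{\mathcal{H}}g$ and completes (a).

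For part~(\hspace{0.3pt}b\hspace{-0.5pt}) I would show that every $b\in B$ lies $\leqslant_\mathcal{H}$ some node of $B\cap\supp(\mathcal{T}\hspace{-1pt},\gamma).$ If $b\in\supp(\mathcal{T}\hspace{-1pt},\gamma)$ this is witnessed by $b$ itself. Otherwise $b\in\impl\mathcal{G}$ for some $\mathcal{G}\in\gamma,$ so $B\cap\nodes\mathcal{G}\neq\varnothing$ and, by part~(a), $B\cap\nodes\mathcal{G}$ is a branch in $\mathcal{G}.$ Since $\mathcal{G}$ has bounded chains, (h) of Lemma~\ref{about trees} identifies this branch as $m{\upfilledspoon}_{\mathcal{G}}$ for some $m\in\maxel\mathcal{G}.$ Then $b\leqslant_\mathcal{G}m,$ hence $b\leqslant_\mathcal{H}m,$ while $m\in\maxel\mathcal{G}\subseteq\supp(\mathcal{T}\hspace{-1pt},\gamma)$ by (b) of Lemma~\ref{lem.graft} and $m\in B\cap\nodes\mathcal{G}\subseteq B;$ so $m\in B\cap\supp(\mathcal{T}\hspace{-1pt},\gamma)$ is the required witness.

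The main obstacle is the maximality half of part~(a): the delicate point is to locate, above the incomparable witness $b,$ a genuine leaf $r\in\maxel\mathcal{G}$ of the graft that already lies on the branch $B,$ which is exactly what the explant/leaf bookkeeping in (f) of Lemma~\ref{lem.hybr} together with downward closedness of branches deliver. Once (a) is in place, part~(\hspace{0.3pt}b\hspace{-0.5pt}) is routine.
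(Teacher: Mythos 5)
Your proof is correct. Part~(\hspace{0.3pt}b\hspace{-0.5pt}) is essentially the paper's own argument: reduce to the implant case, apply part~(a), then (h) of Lemma~\ref{about trees} and (\hspace{0.3pt}b\hspace{-0.5pt}) of Lemma~\ref{lem.graft} to land on a maximal node ${m}\in{B}\cap\supp(\mathcal{T}\hspace{-1pt},\gamma).$ Part~(a), however, takes a genuinely different route. The paper proves maximality of ${C}\coloneq{B}\cap\nodes\mathcal{G}$ directly, by a two-case analysis on whether ${B}$ contains a node outside $(\hspace{-1pt}{0}_\mathcal{G}\hspace{-1.4pt})\hspace{-1pt}{\upspoon}_{\!\mathcal{H}}\cup\nodes\mathcal{G}$: in the first case it pulls such a node back to ${r}=\roott_{\mathcal{H}}({y},\maxel\mathcal{G})$ and identifies ${C}$ with the branch ${r}{\upfilledspoon}_{\!\mathcal{G}}$ via (g) of Lemma~\ref{about trees}; in the second case it extends ${C}$ to a branch ${B}_\mathcal{G}$ of $\mathcal{G},$ shows $(\hspace{-1pt}{0}_\mathcal{G}\hspace{-1.4pt})\hspace{-1pt}{\upspoon}_{\!\mathcal{H}}\cup{B}_\mathcal{G}$ is a chain containing ${B},$ and uses maximality of ${B}$ to force ${C}={B}_\mathcal{G}.$ You instead refute non-maximality in a single uniform argument: a hypothetical extension node ${g}$ produces, via (\hspace{-0.4pt}d) of Lemma~\ref{about trees} (which the paper's proof of this lemma never invokes), an incomparable witness ${b}\in{B},$ and the same structural fact both proofs hinge on --- (f) of Lemma~\ref{lem.hybr} --- then yields ${r}\in\maxel\mathcal{G}$ with ${r}\in{C}$ and ${g}<_{\mathcal{G}}{r}\leqslant_{\mathcal{H}}{b},$ contradicting ${b}\:{\parallel}_{\!\mathcal{H}}\:{g}.$ Your version buys uniformity (the case split disappears, since the witness ${b}$ automatically lies above ${0}_\mathcal{G}$ and outside $\nodes\mathcal{G}$), whereas the paper's version is more constructive in flavor, exhibiting ${C}$ explicitly as ${r}{\upfilledspoon}_{\!\mathcal{G}}$ or ${B}_\mathcal{G}$ rather than only proving it is maximal. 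All steps of your argument check out against the definitions and the cited lemmas.
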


\begin{proof} Let $\mathcal{H}\coloneq\hybr(\mathcal{T}\hspace{-1pt},\gamma).$

(a) Suppose that $\mathcal{G}\,{\in}\,\gamma,$ $\!{B} $ is a branch in $\mathcal{H},$
and ${x}\in{C}_\mathcal{G}\coloneq{B}\cap\nodes\mathcal{G}.$
We must prove that ${C}_\mathcal{G}$ is a branch in $\mathcal{G}.$
We consider two cases\textup{:}
\smallskip

\textit{\hspace{-1pt}Case 1.} $\exists {y}\in\hspace{1pt}\!{B}\setminus\big(\!(\hspace{-1pt}{0}_\mathcal{G}\hspace{-1.4pt})\hspace{-1pt}{\upspoon}_{\!\mathcal{H}\!}
\cup\nodes\mathcal{G}\big).$

By (f) of Lemma~\ref{about trees}, ${x}\hspace{-0.5pt}{\upfilledspoon}_{\!\mathcal{H}}\subseteq {B},$ so ${0}_{\mathcal{G}}\in{B}.$
Then since $\!{B}$ is a chain in $\mathcal{H}$ and
${y}\notin(\hspace{-1pt}{0}_\mathcal{G}\hspace{-1.4pt})\hspace{-1pt}{\upspoon}_{\!\mathcal{H}\!}\cup\{{0}_{\mathcal{G}}\},$
we have ${y}>_{\mathcal{H}\!}{0}_{\mathcal{G}}.$
Then ${y}\in(\hspace{-0.5pt}\maxel\mathcal{G}){\hspace{-1pt}\downfootline}_\mathcal{H}$
by (f) of Lemma~\ref{lem.hybr}.
Let ${r}\coloneq\roott_\mathcal{H}({y},\maxel\mathcal{G}).$
We have ${y}\in{B},$ so by (f) of Lemma~\ref{about trees} ${y}{\upfilledspoon}_{\!\mathcal{H}}\subseteq {B},$
hence ${r}{\upfilledspoon}_{\!\mathcal{H}}\subseteq {B}.$
Now ${r}{\upfilledspoon}_{\!\mathcal{G}}\subseteq\nodes\mathcal{G}$
and by (a) of Lemma~\ref{lem.hybr},
${r}{\upfilledspoon}_{\!\mathcal{G}}\subseteq {r}{\upfilledspoon}_{\!\mathcal{H}},$
so ${r}{\upfilledspoon}_{\!\mathcal{G}}\subseteq{B}\cap\nodes\mathcal{G}={C}_\mathcal{G}.$
Further,  ${r}{\upfilledspoon}_{\!\mathcal{G}}$ is branch in $\mathcal{G}$ by (g) of Lemma~\ref{about trees},
${r}{\upfilledspoon}_{\!\mathcal{G}}\subseteq {C}_\mathcal{G},$
and ${C}_\mathcal{G}$ is a chain in $\mathcal{G},$
therefore  ${C}_\mathcal{G}$ is a branch in $\mathcal{G}.$
\smallskip

\textit{\hspace{-1pt}Case 2.} $\!{B}\subseteq(\hspace{-1pt}{0}_\mathcal{G}\hspace{-1.4pt})\hspace{-1pt}{\upspoon}_{\!\mathcal{H}\!}\cup\nodes\mathcal{G}.$

Since ${C}_\mathcal{G}$ is a chain in $\mathcal{G},$
then by (c) of Lemma~\ref{about trees} there is $\!{B}_\mathcal{G}\in\branches\mathcal{G}$
such that ${C}_\mathcal{G}\subseteq {B}_\mathcal{G}.$
Now $(\hspace{-1pt}{0}_\mathcal{G}\hspace{-1.4pt})\hspace{-1pt}{\upspoon}_{\!\mathcal{H}\!}$ and $\!{B}_\mathcal{G}$
are chains in $\mathcal{H}$
and  $\!{B}_\mathcal{G}\subseteq\nodes\mathcal{G}\subseteq
(\hspace{-1pt}{0}_\mathcal{G}\hspace{-1.4pt})\!{\downfilledspoon}_{\!\mathcal{H}\!},$
therefore $(\hspace{-1pt}{0}_\mathcal{G}\hspace{-1.4pt})\hspace{-1pt}{\upspoon}_{\!\mathcal{H}}\cup {B}_\mathcal{G}$
is a chain in $\mathcal{H}.$
Furthermore, $\!{B}$ is a branch in $\mathcal{H},$ by Case~2
$${B}\subseteq(\hspace{-1pt}{0}_\mathcal{G}\hspace{-1.4pt})\hspace{-1pt}{\upspoon}_{\!\mathcal{H}}\cup\big(\!{B}\cap\nodes\mathcal{G}\big)
=(\hspace{-1pt}{0}_\mathcal{G}\hspace{-1.4pt})\hspace{-1pt}{\upspoon}_{\!\mathcal{H}}\cup {C}_\mathcal{G}\subseteq(\hspace{-1pt}{0}_\mathcal{G}\hspace{-1.4pt})\hspace{-1pt}{\upspoon}_{\!\mathcal{H}}\cup {B}_\mathcal{G},$$
and $(\hspace{-1pt}{0}_\mathcal{G}\hspace{-1.4pt})\hspace{-1pt}{\upspoon}_{\!\mathcal{H}}\cup {B}_\mathcal{G}$
is a chain in $\mathcal{H},$ so
$${B}=(\hspace{-1pt}{0}_\mathcal{G}\hspace{-1.4pt})\hspace{-1pt}{\upspoon}_{\!\mathcal{H}}\cup {C}_\mathcal{G}
=(\hspace{-1pt}{0}_\mathcal{G}\hspace{-1.4pt})\hspace{-1pt}{\upspoon}_{\!\mathcal{H}}\cup {B}_\mathcal{G}.$$
Then ${C}_\mathcal{G}={B}_\mathcal{G}$ because
$(\hspace{-1pt}{0}_\mathcal{G}\hspace{-1.4pt})\hspace{-1pt}{\upspoon}_{\!\mathcal{H}}\cap{C}_\mathcal{G}=\varnothing$
and $(\hspace{-1pt}{0}_\mathcal{G}\hspace{-1.4pt})\hspace{-1pt}{\upspoon}_{\!\mathcal{H}}\cap{B}_\mathcal{G}=\varnothing,$
so ${C}_\mathcal{G}$ is a branch in $\mathcal{G}.$

\medskip
(\hspace{0.3pt}b\hspace{-0.5pt}) Suppose that every $\mathcal{G}\in\gamma$ has bounded chains
and $\!{B}\in\branches\mathcal{H}.$
Let ${x}\in{B}$ and ${C}\coloneq {B}\cap\hspace{0.4pt}\supp(\mathcal{T}\hspace{-1pt},\gamma).$
We must prove that ${x}\in{C}\,{\upfootline}_{\!\mathcal{H}}.$
If ${x}\in\supp(\mathcal{T}\hspace{-1pt},\gamma),$ then ${x}\in{C},$
so ${x}\in{C}\,{\upfootline}_{\!\mathcal{H}}.$
If ${x}\notin\supp(\mathcal{T}\hspace{-1pt},\gamma),$ then there is $\mathcal{G}\in\gamma$
such that ${x}\in\impl\mathcal{G}.$
We have $\!{B}\cap\nodes\mathcal{G}\neq\varnothing,$
so by (a), $\!{B}_\mathcal{G}\coloneq {B}\cap\nodes\mathcal{G}$ is a branch in $\mathcal{G}.$
Now, by (h) of Lemma~\ref{about trees}, there is ${m}\in\maxel\mathcal{G}$
such that $\!{B}_\mathcal{G}={m}{\upfilledspoon}_{\!\mathcal{G}}.$
Then
${x}\in{B}_\mathcal{G}={m}{\upfilledspoon}_{\!\mathcal{G}}\subseteq{m}{\upfilledspoon}_{\!\mathcal{H}}$
and ${m}\in\supp(\mathcal{T}\hspace{-1pt},\gamma),$
whence ${m}\in{C},$ so ${x}\in{C}\,{\upfootline}_{\!\mathcal{H}}.$
\end{proof}

\section{Foliage hybrid operation}
\label{sect.f.hybr}

In this section we construct the foliage hybrid operation and establish its properties --- see Definition~\ref{def.f.hybr} and Proposition~\ref{prop.f.hybr}.
The foliage hybrid operation  modifies a given foliage tree $\mathbf{F}$ with the help of a family $\varphi$ of special foliage trees, which we call \emph{foliage grafts}. This operation deals with nonincreasing foliage trees and it acts as follows. At first, applying the hybrid operation (see Section~\ref{sect.hybr}) to $\skeleton\mathbf{F}$ and $\{\skeleton\mathbf{G}\,{:}\,\mathbf{G}\in\varphi\},$ we obtain a tree. After that we define leaves at nodes of this tree by using leaves of $\mathbf{F}$ and leaves of foliage grafts~$\mathcal{G},$ $\mathcal{G}\in\varphi.$

\begin{deff}\label{def.fol.graft}
   Let $\mathbf{F}$ be a nonincreasing foliage tree.
   Then
   a \textbf{foliage graft} for $\mathbf{F}$ is a foliage tree~$\mathbf{G}$
   such that\textup{:}

   \begin{itemize}
   \item[\textup{(a)}]
      $\mathsurround=0pt\mathbf{G}$ is nonincreasing;
   \item[\textup{(\hspace{0.3pt}b\hspace{-1pt})}]
      $\mathsurround=0pt\skeleton\mathbf{G}$ is a graft for $\skeleton\mathbf{F}$\\
      (hence ${0}_{\hspace{-0.2pt}\mathbf{G}}\in\nodes\mathbf{F}$ and
      $\mathsurround=0pt\hspace{1pt}\maxel\mathbf{G}\subseteq\nodes\mathbf{F}$);
   \item[\textup{(c)}]
      $\mathsurround=0pt\mathbf{G}_{\hspace{-0.3pt}{0}_{\hspace{-0.6pt}\mathbf{G}\!}}\subseteq\mathbf{F}_{\hspace{-1.2pt}
      {0}_{\hspace{-0.6pt}\mathbf{G}}};$
   \item[\textup{(\hspace{-0.4pt}d)}]
      $\mathsurround=0pt\forall{m}\,{\in}\maxel\mathbf{G}
      \;[\mathbf{G}_{\hspace{-0.3pt}{m\!}}=\mathbf{F}_{\hspace{-1.2pt}{m}}
      ].$
   \end{itemize}
   The set\vspace{-1ex}
   $$\cut(\mathbf{F},\mathbf{G})\coloneq
   \mathbf{F}_{\hspace{-1.2pt}{0}_{\hspace{-0.6pt}\mathbf{G}}}\!\setminus
   \mathbf{G}_{\hspace{-0.3pt}{0}_{\hspace{-0.6pt}\mathbf{G}}}
   $$
   is called the \textbf{cut} from
   $\mathbf{F}$ by $\mathbf{G}.$
\end{deff}

\begin{deff}\label{def.c.f.adj}
   Let $\mathbf{F}$ be a nonincreasing foliage tree.
   Then
   $\varphi$ is a \textbf{consistent} family of foliage grafts for $\mathbf{F}$
   iff

   \begin{itemize}
   \item [\textup{(a)}]
      $\forall\mathbf{G}\,{\in}\,\varphi\;
      [\hspace{1pt}\mathbf{G}$ is a foliage graft for $\mathbf{F}\mathsurround=0pt\,];$
   \item [\textup{(\hspace{0.3pt}b\hspace{-1pt})}]
      $\forall\mathbf{D}\,{\neq}\,\mathbf{E}\,{\in}\,\varphi\;
      [\hspace{1pt}\skeleton\mathbf{D}\neq\skeleton\mathbf{E}\,];$
   \item [\textup{(c)}]
      $\{\skeleton\mathbf{G}\,{:}\:\mathbf{G}\,{\in}\,\varphi\}$ is a consistent family of grafts for $\skeleton\mathbf{F}.$
   \end{itemize}
   The set\vspace{-0.5ex}
   $$
   \loss(\mathbf{F},\varphi)\coloneq\bigcup_{\mathbf{G}\in\varphi}\cut(\mathbf{F},\mathbf{G})\vspace{-1ex}
   $$
   is called the \textbf{loss} of $\mathbf{F}$ on $\varphi.$
\end{deff}

Now we define the foliage hybrid operation:

\begin{deff}\label{def.f.hybr}
   Let $\varphi$ be a consistent family of foliage grafts for a nonincreasing foliage tree $\mathbf{F}.$
   Then
   the \textbf{foliage hybrid} of $\mathbf{F}$ and $\varphi$
    --- in symbols, $\fhybr(\mathbf{F},\varphi)$ ---
   is a foliage tree $\mathbf{H}$
   such that\textup{:}

   \begin{itemize}
   \item[\textup{(a)}]
      $\mathsurround=0pt\skeleton\mathbf{H}\coloneq
      \hybr\!\big(\!\skeleton\mathbf{F},\{\skeleton\mathbf{G}:\mathbf{G}\,{\in}\,\varphi\}\!\big);$
   \item[\textup{(\hspace{0.3pt}b\hspace{-1pt})}]
      $\mathbf{H}_{{x}}\coloneq
      \begin{cases}
         \mathbf{G}_{\hspace{-0.3pt}{x}}\!\setminus\loss(\mathbf{F},\varphi),
         &\text
         {if $\ {x}\in\impl\mathbf{G}\ $ for some $\ \mathbf{G}\in\varphi;$}
      \\ \,\mathbf{F}_{\hspace{-1.2pt}{x}}\!\setminus\loss(\mathbf{F},\varphi)\vphantom{\tilde{\big\langle\rangle}},
         &\text
         {otherwise
         \textup{\big(}i.e., when ${x}\in\supp(\mathbf{F},\varphi)
         \textup{\big)},$}
      \end{cases}
      $
   \end{itemize}
   where\vspace{-0.5ex}
   $$\supp(\mathbf{F},\varphi)\coloneq
   \supp\!\big(\!\skeleton\mathbf{F},\{\skeleton\mathbf{G}\,{:}\:\mathbf{G}\,{\in}\,\varphi\}\!\big).$$
\end{deff}

Note that the hybrid of $\skeleton\mathbf{F}$ and $\{\skeleton\mathbf{G}\hspace{-1pt}:\hspace{-1pt}\mathbf{G}\,{\in}\,\varphi\}$
is a tree by Proposition~\ref{hybr.is.tree}, so a foliage hybrid is indeed a foliage tree.

\begin{lem}\label{lem.reb}
   Suppose that $\varphi$ is a consistent family of foliage grafts for a nonincreasing foliage tree $\mathbf{F}$ and $\,\mathbf{H}=\fhybr(\mathbf{F},\varphi).$
   Then\textup{:}

   \begin{itemize}
   \item [\textup{(a)}]
      $\mathsurround=0pt\forall\mathbf{G}\,{\in}\,\varphi\ \forall{x}\,{\in}\nodes\mathbf{G}\;
      \big[\,\mathbf{H}_{{x}}=\mathbf{G}_{\hspace{-0.3pt}{x}}\setminus\loss(\mathbf{F},\varphi)
      \,\big].$
   \item [\textup{(\hspace{0.3pt}b\hspace{-1pt})}]
      For any set ${A}\!:$
      \\
      if $\ \forall\mathbf{G}\,{\in}\,\varphi\;
      \big[\,
      {A}\subseteq\mathbf{G}_{\hspace{-0.3pt}{0}_{\hspace{-0.6pt}\mathbf{G}}}
      \enskip\mathsf{or}\enskip
      {A}\cap\mathbf{F}_{\hspace{-1.2pt}{0}_{\hspace{-0.6pt}\mathbf{G}}}=\varnothing
      \,\big],$
      then
      ${A}\cap\loss(\mathbf{F},\varphi)=\varnothing.$
      \hfill$\Box$
   \end{itemize}
\end{lem}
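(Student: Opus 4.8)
The plan is to read part (a) straight off the defining clauses of $\mathbf{H}$ in Definition~\ref{def.f.hybr}(\hspace{0.3pt}b\hspace{-0.5pt}), treating the three kinds of nodes of $\mathbf{G}$ separately. By (a) of Lemma~\ref{lem.graft} we have $\nodes\mathbf{G}\equiv\{0_{\mathbf{G}}\}\sqcup\maxel\mathbf{G}\sqcup\impl\mathbf{G}$, so it suffices to verify $\mathbf{H}_{x}=\mathbf{G}_{x}\setminus\loss(\mathbf{F},\varphi)$ for $x$ in each of these three pieces. For $x\in\impl\mathbf{G}$ this is exactly the first case of Definition~\ref{def.f.hybr}(\hspace{0.3pt}b\hspace{-0.5pt}). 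For $x\in\{0_{\mathbf{G}}\}\cup\maxel\mathbf{G}$ I would first note, using (\hspace{0.3pt}b\hspace{-0.5pt}) of Lemma~\ref{lem.graft}, that $x\in\supp(\mathbf{F},\varphi)$; since the ``otherwise'' clause of Definition~\ref{def.f.hybr}(\hspace{0.3pt}b\hspace{-0.5pt}) is precisely the case $x\in\supp(\mathbf{F},\varphi)$, this already fixes $\mathbf{H}_{x}=\mathbf{F}_{x}\setminus\loss(\mathbf{F},\varphi)$ and rules out the implant clause.

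It then remains to compare $\mathbf{F}_{x}$ with $\mathbf{G}_{x}$ at these nodes. When $x\in\maxel\mathbf{G}$, Definition~\ref{def.fol.graft}(\hspace{-0.4pt}d) gives $\mathbf{G}_{x}=\mathbf{F}_{x}$, so $\mathbf{H}_{x}=\mathbf{F}_{x}\setminus\loss(\mathbf{F},\varphi)=\mathbf{G}_{x}\setminus\loss(\mathbf{F},\varphi)$ at once. When $x=0_{\mathbf{G}}$ the two leaves differ, but only by the cut: Definition~\ref{def.fol.graft}(c) gives $\mathbf{G}_{0_{\mathbf{G}}}\subseteq\mathbf{F}_{0_{\mathbf{G}}}$, and $\cut(\mathbf{F},\mathbf{G})=\mathbf{F}_{0_{\mathbf{G}}}\setminus\mathbf{G}_{0_{\mathbf{G}}}$ is one of the sets whose union defines $\loss(\mathbf{F},\varphi)$, so $\cut(\mathbf{F},\mathbf{G})\subseteq\loss(\mathbf{F},\varphi)$. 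Subtracting $\loss(\mathbf{F},\varphi)$ from $\mathbf{F}_{0_{\mathbf{G}}}$ therefore absorbs the whole cut and leaves $\mathbf{F}_{0_{\mathbf{G}}}\setminus\loss(\mathbf{F},\varphi)=\mathbf{G}_{0_{\mathbf{G}}}\setminus\loss(\mathbf{F},\varphi)$; this is the single elementary set-algebra identity on which part (a) turns.

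For part (\hspace{0.3pt}b\hspace{-0.5pt}), since $\loss(\mathbf{F},\varphi)=\bigcup_{\mathbf{G}\in\varphi}\cut(\mathbf{F},\mathbf{G})$, it is enough to show $A\cap\cut(\mathbf{F},\mathbf{G})=\varnothing$ for each fixed $\mathbf{G}\in\varphi$ and then take the union over $\varphi$. I would split on the hypothesis for that $\mathbf{G}$: if $A\subseteq\mathbf{G}_{0_{\mathbf{G}}}$, then $A$ misses $\cut(\mathbf{F},\mathbf{G})=\mathbf{F}_{0_{\mathbf{G}}}\setminus\mathbf{G}_{0_{\mathbf{G}}}$ since that set is disjoint from $\mathbf{G}_{0_{\mathbf{G}}}$; and if $A\cap\mathbf{F}_{0_{\mathbf{G}}}=\varnothing$, then a fortiori $A\cap\cut(\mathbf{F},\mathbf{G})=\varnothing$ because $\cut(\mathbf{F},\mathbf{G})\subseteq\mathbf{F}_{0_{\mathbf{G}}}$.

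I do not expect a genuine obstacle: the whole argument is bookkeeping against the definitions. The only two points needing any care are confirming that $0_{\mathbf{G}}$ and the nodes of $\maxel\mathbf{G}$ are governed by the ``otherwise'' clause of Definition~\ref{def.f.hybr}(\hspace{0.3pt}b\hspace{-0.5pt})---which is exactly their membership in $\supp(\mathbf{F},\varphi)$, supplied by Lemma~\ref{lem.graft}(\hspace{0.3pt}b\hspace{-0.5pt})---and the cut-absorption identity at the root node in part (a).
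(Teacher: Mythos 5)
Your proof is correct. The paper offers no proof of this lemma (it is stated with a closing $\Box$ as routine), and your verification --- partitioning $\nodes\mathbf{G}$ via Lemma~\ref{lem.graft}(a), routing $\{0_{\mathbf{G}}\}\cup\maxel\mathbf{G}$ through the ``otherwise'' clause of Definition~\ref{def.f.hybr}(b) by way of Lemma~\ref{lem.graft}(b) and (c), absorbing $\cut(\mathbf{F},\mathbf{G})\subseteq\loss(\mathbf{F},\varphi)$ at the root node, and reducing part (b) to the disjointness of $A$ from each $\cut(\mathbf{F},\mathbf{G})$ --- is exactly the intended routine argument.
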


Now we establish several properties of the foliage hybrid operation:

\begin{prop}\label{prop.f.hybr}
   Suppose that $\varphi$ is a consistent family of foliage grafts for a nonincreasing foliage tree $\mathbf{F}$ and $\mathbf{H}=\fhybr(\mathbf{F},\varphi).$
   Then\textup{:}

   \begin{itemize}
   \item [\textup{(a)}]
      $\mathsurround=0pt\mathbf{H}\,$ is nonincreasing.
   \item [\textup{(\hspace{0.3pt}b\hspace{-1pt})}]
      If $\hspace{1pt}\mathbf{F}$ and each $\mathbf{G}\in\varphi$ are splittable,\\
      then
      $\mathbf{H}$ is splittable.
   \item [\textup{(c)}]
      If $\hspace{1pt}\mathbf{F}$ and each $\mathbf{G}\in\varphi$  are locally strict,\\
      then
      $\mathbf{H}$ is locally strict.
   \item [\textup{(\hspace{-0.4pt}d)}]
      If $\hspace{1pt}\mathbf{F}$ is complete \textup{(}has strict branches\textup{)} and splittable, and each $\mathbf{G}\in\varphi$ has bounded chains,\\
      then $\mathbf{H}$ is complete \textup{(}has strict branches\textup{)}.
   \item [\textup{(e)}]
      If $\hspace{1pt}\mathbf{F}$ and each $\mathbf{G}\in\varphi$  are open in a space ${X},$
      \\
      then
      $\mathbf{H}$ is open in the subspace ${X}\setminus\loss(\mathbf{F},\varphi)\!$ of ${X}.$
   \end{itemize}
\end{prop}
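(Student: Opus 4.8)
Write $\mathcal{T}=\skeleton\mathbf{F}$, $S=\supp(\mathbf{F},\varphi)$ and $L=\loss(\mathbf{F},\varphi)$. The organising principle is that every leaf of $\mathbf{H}$ is obtained from a leaf of $\mathbf{F}$ or of a single graft by removing the \emph{same} set $L$: by Lemma~\ref{lem.reb}(a) we have $\mathbf{H}_x=\mathbf{G}_x\setminus L$ for all $x\in\nodes\mathbf{G}$, and $\mathbf{H}_x=\mathbf{F}_x\setminus L$ for $x\in S$. Because $L$ is subtracted uniformly, inclusions, disjointness and disjoint-union decompositions of leaves all descend from $\mathbf{F}$ and the grafts upon removing $L$. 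Parts (a), (b), (c), (e) are obtained from this observation together with the structural results of Section~\ref{sect.hybr}; the real difficulty is concentrated in (d).

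For (a) it suffices, since $L$ is removed everywhere, to verify the $\mathbf{F}$/$\mathbf{G}$-leaf inclusions for $x<_\mathbf{H} y$, which I would do by running through clauses (b1)--(b5) of Definition~\ref{def.hybr}: (b1) and (b2) are immediate from nonincreasingness of $\mathbf{F}$ and of the relevant graft, while (b3)--(b5) are closed up by the sandwiches $\mathbf{G}_{0_\mathbf{G}}\subseteq\mathbf{F}_{0_\mathbf{G}}$ and $\mathbf{F}_m=\mathbf{G}_m$ for $m\in\maxel\mathbf{G}$ supplied by Definition~\ref{def.fol.graft}(c),(d). Part (e) is then immediate: $\mathbf{H}_z=\mathbf{F}_z\cap(X\setminus L)$ or $\mathbf{G}_z\cap(X\setminus L)$ with the first factor open in $X$, hence $\mathbf{H}_z$ is open in the subspace $X\setminus L$. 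For (b) I would feed $x\parallel_\mathbf{H} y$ into Proposition~\ref{prop.hybr}(b), obtaining ancestors $x'\le_\mathbf{H} x$ and $y'\le_\mathbf{H} y$ that are either $\mathcal{T}$-incomparable nodes of $S$ or incomparable nodes of one common graft; by part (a), $\mathbf{H}_x\cap\mathbf{H}_y\subseteq\mathbf{H}_{x'}\cap\mathbf{H}_{y'}$, and splittability of $\mathbf{F}$ (resp. of the graft) kills $\mathbf{H}_{x'}\cap\mathbf{H}_{y'}$ since $\mathbf{H}_{x'}\subseteq\mathbf{F}_{x'}$ (resp. $\subseteq\mathbf{G}_{x'}$).

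For (c) I would classify $x\in\nodes\mathbf{H}\setminus\maxel\mathbf{H}$ by Proposition~\ref{prop.hybr}(a), which computes $\sons_\mathbf{H}(x)$. If $x\in\impl\mathbf{G}$ the children are $\sons_\mathbf{G}(x)$ and the local-strictness equation for $\mathbf{G}$ survives removing $L$; if $x\in S\setminus\{0_\mathbf{G}:\mathbf{G}\in\varphi\}$ the children are $\sons_\mathcal{T}(x)\subseteq\nodes\mathcal{T}\cap\nodes\mathbf{H}=S$ and the equation for $\mathbf{F}$ survives. The one subtle node is $x=0_\mathbf{G}$, where $\sons_\mathbf{H}(x)=\sons_\mathbf{G}(x)$ but the leaf is the $\mathbf{F}$-leaf $\mathbf{F}_{0_\mathbf{G}}\setminus L$; here the identity $\cut(\mathbf{F},\mathbf{G})=\mathbf{F}_{0_\mathbf{G}}\setminus\mathbf{G}_{0_\mathbf{G}}\subseteq L$ gives $\mathbf{F}_{0_\mathbf{G}}\setminus L=\mathbf{G}_{0_\mathbf{G}}\setminus L$, so the graft equation again applies. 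In each case the pairwise disjointness of the pieces is preserved under the uniform subtraction of $L$.

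Part (d) is the main obstacle. After noting $\nodes\mathbf{H}\neq\varnothing$ (as $\minel\mathcal{T}\subseteq S$), fix $B\in\branches\mathbf{H}$ and set $C=B\cap S$. Since every graft has bounded chains, $C$ is $\mathbf{H}$-cofinal in $B$ by Lemma~\ref{lem.reb.hybr}(b); combined with nonincreasingness (part (a)) and Lemma~\ref{about foliage trees}(a) this gives $\fruit_\mathbf{H}(B)=\fruit_\mathbf{H}(C)=\fruit_\mathbf{F}(C)\setminus L$. The heart of the argument is the claim that $C$ has no proper $\mathcal{T}$-upper bound, so that $C$ is $\mathcal{T}$-cofinal in any branch $B_\mathbf{F}\in\branches\mathbf{F}$ extending it (Lemma~\ref{about trees}(c)). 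I would prove this from the maximality of $B$: a support upper bound $z\in S$ of $C$ would satisfy $e<_\mathbf{H} z$ for every $e\in B$ --- for $e$ in an implant one routes through the maximal graft-node $m_\mathbf{E}\in\maxel\mathbf{E}\cap C$ supplied by Lemma~\ref{lem.reb.hybr}(a) and Lemma~\ref{about trees}(h) --- whence $B\cup\{z\}$ contradicts maximality; and an upper bound lying in an explant is excluded because Lemma~\ref{lem.graft}(d), or the graft-branch of Lemma~\ref{lem.reb.hybr}(a), forces a node of the corresponding $\maxel$ below it, contradicting the definition of the explant. Granting cofinality, Lemma~\ref{about foliage trees}(a) gives $\fruit_\mathbf{F}(C)=\fruit_\mathbf{F}(B_\mathbf{F})$, which is nonempty (a singleton) by completeness (strict branches) of $\mathbf{F}$. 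Finally I would show $\fruit_\mathbf{F}(C)\cap L=\varnothing$ using splittability: a point $p\in\fruit_\mathbf{F}(C)\cap\cut(\mathbf{F},\mathbf{G})$ lies in $\mathbf{F}_{0_\mathbf{G}}\setminus\mathbf{G}_{0_\mathbf{G}}$, so $\scope_\mathbf{F}(p)\supseteq C\cup\{0_\mathbf{G}\}$ is a chain; comparing $0_\mathbf{G}$ with $C$, it cannot lie strictly above all of $C$ (no upper bound), and if it equals or lies below some $c\in C$ then Lemma~\ref{lem.reb.hybr}(a) or Lemma~\ref{lem.graft}(d) puts an $m\in\maxel\mathbf{G}$ at or below $c$, giving $p\in\mathbf{F}_m=\mathbf{G}_m\subseteq\mathbf{G}_{0_\mathbf{G}}$, a contradiction. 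Hence $\fruit_\mathbf{H}(B)=\fruit_\mathbf{F}(C)$, completing (d). The two genuinely delicate steps, which I expect to absorb most of the work, are the no-upper-bound (cofinality) argument and the splittability-driven disjointness $\fruit_\mathbf{F}(C)\cap L=\varnothing$.
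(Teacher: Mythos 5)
Your proposal is correct, and parts (a), (b), (c), (e) coincide with the paper's own proof: the clause-by-clause check of (b1)--(b5) of Definition~\ref{def.hybr} for nonincreasingness, Proposition~\ref{prop.hybr}(b) plus nonincreasingness for splittability, Proposition~\ref{prop.hybr}(a) with uniform removal of $\loss(\mathbf{F},\varphi)$ for local strictness (your separate handling of $x=0_{\mathbf{G}}$ via $\cut(\mathbf{F},\mathbf{G})\subseteq\loss(\mathbf{F},\varphi)$ is exactly the content of Lemma~\ref{lem.reb}(a), which the paper cites instead of re-deriving), and Definition~\ref{def.f.hybr}(b) for openness. Part (d) is where you genuinely reorganize. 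The paper treats the two conclusions asymmetrically: completeness is proved \emph{without} any $\mathbf{F}$-cofinality of $C=B_{\mathbf{H}}\cap\supp(\mathbf{F},\varphi)$ in $B_{\mathbf{F}}$ --- it uses only the trivial inclusion $\fruit_{\mathbf{F}}(B_{\mathbf{F}})\subseteq\fruit_{\mathbf{F}}(C)$ and proves $\fruit_{\mathbf{F}}(B_{\mathbf{F}})\cap\loss(\mathbf{F},\varphi)=\varnothing$ by reducing through Lemma~\ref{lem.reb}(b) to a per-graft dichotomy (either $0_{\mathbf{G}}\in B_{\mathbf{F}}$, whence $\fruit_{\mathbf{F}}(B_{\mathbf{F}})\subseteq\mathbf{G}_{0_{\mathbf{G}}}$ via the graft branch, or $0_{\mathbf{G}}\notin B_{\mathbf{F}}$, whence splittability and Lemma~\ref{about trees}(d) give disjointness from $\mathbf{F}_{0_{\mathbf{G}}}$) --- while strict branches is a separate contradiction argument in which non-cofinality is first derived and then refuted. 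You instead prove the cofinality (no-strict-upper-bound) claim up front and feed it into both conclusions; your two cases for that claim (a support upper bound contradicts maximality of $B_{\mathbf{H}}$; an explant upper bound is refuted via Lemma~\ref{lem.graft}(d),(e) and the graft branch supplied by Lemma~\ref{lem.reb.hybr}(a) with Lemma~\ref{about trees}(h)) are exactly the paper's Cases 1 and 2 of its strict-branches half, stated positively, and your disjointness step replaces the paper's branch dichotomy by a pointwise one: $p\in\fruit_{\mathbf{F}}(C)\cap\cut(\mathbf{F},\mathbf{G})$ makes $\scope_{\mathbf{F}}(p)\supseteq C\cup\{0_{\mathbf{G}}\}$ a chain by splittability, and every possible position of $0_{\mathbf{G}}$ relative to $C$ is excluded. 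What your route buys is uniformity: the single identity $\fruit_{\mathbf{H}}(B_{\mathbf{H}})=\fruit_{\mathbf{F}}(B_{\mathbf{F}})$ yields completeness and strict branches simultaneously, whereas the paper in effect performs the hard cofinality-type work twice; what it costs is that even completeness now rests on that hard claim, and your explant case is compressed --- to invoke Lemma~\ref{lem.reb.hybr}(a) you must first get $0_{\mathbf{G}}\in B_{\mathbf{H}}$, e.g.\ by noting that Lemma~\ref{lem.graft}(e) places all of $C$ at or below $0_{\mathbf{G}}$, so either $0_{\mathbf{G}}\in C$ or $0_{\mathbf{G}}$ is itself a support upper bound already excluded --- but the lemmas you cite do suffice to fill this in, so it is a matter of writing out details, not a gap.
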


\begin{proof}
(a) We must prove that the foliage tree $\mathbf{H}$ is nonincreasing.
Suppose ${x},{y}\in\nodes\mathbf{H}$ and ${x}<_{\mathbf{H}}{y}.$
Then one of conditions (b1)--(b5) of Definition~\ref{def.hybr} holds.
For example, if (b4) holds, then there is $\mathbf{G}\in\varphi$ such that
$${x}\in\impl\mathbf{G},\quad {y}\in\supp(\mathbf{F},\varphi),\quad \text{and}\quad {x}<_{\mathbf{G}}{r}\coloneq\roott_\mathbf{F}\hspace{-0.5pt}({y},\maxel\mathbf{G})
\leqslant_{\mathbf{F}}{y}.$$
Then $\mathbf{G}_{\hspace{-0.3pt}{x}}\supseteq\mathbf{G}_{\hspace{-0.3pt}{r}}$
and $\mathbf{F}_{\hspace{-1.2pt}{r}}\supseteq\mathbf{F}_{\hspace{-1.2pt}{y}}$
because $\mathbf{G}$ and $\mathbf{F}$ are nonincreasing by Definition~\ref{def.fol.graft}.
Then $\mathbf{H}_{{x}}\supseteq\mathbf{H}_{r}$ by (a) of Lemma~\ref{lem.reb}
and $\mathbf{H}_{r}\supseteq\mathbf{H}_{{y}}$ by (\hspace{0.3pt}b\hspace{-0.5pt}) of Definition~\ref{def.f.hybr},
so $\mathbf{H}_{{x}}\supseteq\mathbf{H}_{{y}}.$ The other cases are similar.
\medskip

(\hspace{0.3pt}b\hspace{-0.5pt}) Suppose $\mathbf{F}$ and each $\mathbf{G}\in\varphi$ are splittable; we must prove that
$\mathbf{H}$ is also splittable.
By (a), $\mathbf{H}$ is nonincreasing.
Let ${x},{y}\in\nodes\mathbf{H}$ and ${x}\:{\parallel}_{\!\mathbf{H}}\:{y}.$
Then by (\hspace{0.3pt}b\hspace{-0.5pt}) of Proposition~\ref{prop.hybr}, there a ${x}'\in{x}\hspace{-0.5pt}{\upfilledspoon}_{\!\mathbf{H}}\!$ and ${y}'\in{y}{\upfilledspoon}_{\!\mathbf{H}}$
such that\vspace{-1ex}
\begin{align}\label{supp}
\qquad\qquad\qquad&\text{either}\qquad&{x}'\!,{y}'\,{\in}\,\supp(\mathbf{F},\varphi)\enskip\mathsf{and}\enskip {x}'\,{\parallel}_{\!\mathbf{F}}\:{y}'\:&\qquad\qquad\qquad\\ \label{add}
\qquad\qquad\qquad&\text{or}\qquad&\exists\mathbf{G}\,{\in}\,\varphi\,\big[\,{x}'\!,{y}'\,{\in}\hspace{1pt}\nodes\mathbf{G}\enskip\mathsf{and}\enskip {x}'\,{\parallel}_{\!\mathbf{G}}\:{y}'\,\big].&\qquad\qquad\qquad
\end{align}
If (\ref{supp}) holds, then
$\mathbf{H}_{{x}}\subseteq\mathbf{H}_{{x}'\!}\subseteq\mathbf{F}_{\hspace{-1.2pt}{x}'\!}$
and $\mathbf{H}_{{y}}\subseteq\mathbf{H}_{{y}'\!}\subseteq\mathbf{F}_{\hspace{-1.2pt}{y}'\!}$
since $\mathbf{H}$ is nonincreasing and by (\hspace{0.3pt}b\hspace{-0.5pt}) of Definition~\ref{def.f.hybr},
and $\mathbf{F}_{\hspace{-1.2pt}{x}'\!}\cap\mathbf{F}_{\hspace{-1.2pt}{y}'\!}=\varnothing$ because $\mathbf{F}$ is splittable,
so $\mathbf{H}_{{x}}\cap\,\mathbf{H}_{{y}}=\varnothing.$
If (\ref{add}) holds, then
$\mathbf{H}_{{x}'\!}\subseteq\mathbf{G}_{\hspace{-0.3pt}{x}'\!}$
and $\mathbf{H}_{{y}'\!}\subseteq\mathbf{G}_{\hspace{-0.3pt}{y}'\!}$
by (a) of Lemma~\ref{lem.reb},
and $\mathbf{G}_{\hspace{-0.3pt}{x}'\!}\cap\mathbf{G}_{\hspace{-0.3pt}{y}'\!}=\varnothing$ since $\mathbf{G}$ is splittable,
so $\mathbf{H}_{{x}}\cap\mathbf{H}_{{y}}=\varnothing$ again.
\medskip

(c) Suppose that $\mathbf{F}$ and each $\mathbf{G}\in\varphi$  are locally strict; we must prove that
$\mathbf{H}$ is also locally strict. Let ${x}\in\nodes\mathbf{H}\setminus\maxel\mathbf{H}.$
Then $\sons_{\hspace{0.2pt}\mathbf{H}\hspace{-0.7pt}}(\hspace{-0.7pt}{x}\hspace{-1pt})\neq\varnothing$ by (a) of Lemma~\ref{about trees}.
We consider two cases\textup{:}
\smallskip

\textit{\hspace{-1pt}Case 1.} $\exists\mathbf{G}\,{\in}\,\varphi\,\big[\,{x}\in\{{0}_{\hspace{-0.2pt}\mathbf{G}}\}\cup\hspace{0.4pt}\impl\mathbf{G}\,\big].$

By (a) of Proposition~\ref{prop.hybr} we have
$\sons_{\hspace{0.2pt}\mathbf{G}\hspace{-1.2pt}}(\hspace{-0.7pt}{x}\hspace{-1pt})=
\sons_{\hspace{0.2pt}\mathbf{H}\hspace{-0.7pt}}(\hspace{-0.7pt}{x}\hspace{-1pt})\neq\varnothing,$
so ${x}\in\nodes\mathbf{G}\setminus\maxel\mathbf{G}.$
Then
$$\mathbf{G}_{\hspace{-0.3pt}{x}}\,\equiv\!\bigsqcup_{{s}\in\hspace{0.4pt}\sons_{\hspace{0.2pt}\mathbf{G}\hspace{-1.2pt}}
(\hspace{-0.7pt}{x}\hspace{-1pt})}\mathbf{G}_{\hspace{-0.3pt}{s}}\vspace{-2ex}
$$
since $\mathbf{G}$ is locally strict, hence
$$\mathbf{G}_{\hspace{-0.3pt}{x}}\,{\setminus}\,\loss(\mathbf{F},\varphi)\:\equiv
\!\bigsqcup_{{s}\in\hspace{0.4pt}\sons_{\hspace{0.2pt}\mathbf{H}\hspace{-0.7pt}}
(\hspace{-0.7pt}{x}\hspace{-1pt})}
\big(\mathbf{G}_{\hspace{-0.3pt}{s}}{\setminus}\loss(\mathbf{F},\varphi)\big).
$$
Since ${x}\in\nodes\mathbf{G}$ and
$\sons_{\hspace{0.2pt}\mathbf{H}\hspace{-0.7pt}}(\hspace{-0.7pt}{x}\hspace{-1pt})=
\sons_{\hspace{0.2pt}\mathbf{G}\hspace{-1.2pt}}(\hspace{-0.7pt}{x}\hspace{-1pt})\subseteq
\nodes\mathbf{G},$
then by (a) of Lemma~\ref{lem.reb} we have
$$\mathbf{H}_{{x}}\,\equiv\!\bigsqcup_{{s}\in\hspace{0.4pt}
\sons_{\hspace{0.2pt}\mathbf{H}\hspace{-0.7pt}}(\hspace{-0.7pt}{x}\hspace{-1pt})}
\mathbf{H}_{{s}}\,.\vspace{-1ex}
$$

\textit{\hspace{-1pt}Case 2.} ${x}\in\supp(\mathbf{F},\varphi)\,{\setminus}\,\{{0}_{\hspace{-0.2pt}\mathbf{G}}:\mathbf{G}\in\varphi\}.$

By (a) of Proposition~\ref{prop.hybr} we have
$\sons_{\hspace{0.5pt}\mathbf{F}\hspace{-0.5pt}}(\hspace{-0.7pt}{x}\hspace{-1pt})=\sons_{\hspace{0.2pt}\mathbf{H}\hspace{-0.7pt}}(\hspace{-0.7pt}{x}\hspace{-1pt})\neq\varnothing,$
so ${x}\in\nodes\mathbf{F}\setminus\maxel\mathbf{F}.$
Then
$$\mathbf{F}_{\hspace{-1.2pt}{x}}\,\equiv\!\bigsqcup_{{s}\in\hspace{0.4pt}
\sons_{\hspace{0.5pt}\mathbf{F}\hspace{-0.5pt}}(\hspace{-0.7pt}{x}\hspace{-1pt})}\mathbf{F}_{\hspace{-1.2pt}{s}}
\vspace{-2ex}
$$
since $\mathbf{F}$ is locally strict, whence
$$\mathbf{F}_{\hspace{-1.2pt}{x}}\,{\setminus}\,\loss(\mathbf{F},\varphi)\,\equiv\!
\bigsqcup_{{s}\in\hspace{0.4pt}\sons_{\hspace{0.2pt}\mathbf{H}\hspace{-0.7pt}}
(\hspace{-0.7pt}{x}\hspace{-1pt})}\big(\mathbf{F}_{\hspace{-1.2pt}{s}}{\setminus}\loss(\mathbf{F},\varphi)\big).
$$
Since $\sons_{\hspace{0.2pt}\mathbf{H}\hspace{-0.7pt}}(\hspace{-0.7pt}{x}\hspace{-1pt})=
\sons_{\hspace{0.5pt}\mathbf{F}\hspace{-0.5pt}}(\hspace{-0.7pt}{x}\hspace{-1pt})\subseteq
\nodes\mathbf{F}$
and $\sons_{\hspace{0.2pt}\mathbf{H}\hspace{-0.7pt}}(\hspace{-0.7pt}{x}\hspace{-1pt})\subseteq
\nodes\mathbf{H},$
we have
$$\sons_{\hspace{0.2pt}\mathbf{H}\hspace{-0.7pt}}(\hspace{-0.7pt}{x}\hspace{-1pt})
\:\subseteq\:\nodes\mathbf{F}\cap\nodes\mathbf{H}
\:=\:\supp(\mathbf{F},\varphi)$$
by (\hspace{0.3pt}b\hspace{-0.5pt}) of Lemma~\ref{lem.hybr}.
Also we have ${x}\in\supp(\mathbf{F},\varphi),$
so by (\hspace{0.3pt}b\hspace{-0.5pt}) of Definition~\ref{def.f.hybr} we get
$$\mathbf{H}_{{x}}\,\equiv\!\bigsqcup_{{s}\in\hspace{0.4pt}
\sons_{\hspace{0.2pt}\mathbf{H}\hspace{-0.7pt}}(\hspace{-0.7pt}{x}\hspace{-1pt})}
\mathbf{H}_{{s}}.
$$

(\hspace{-0.4pt}d) First, suppose that $\mathbf{F}$ is complete and splittable, and each $\mathbf{G}\in\varphi$ has bounded chains. We must prove that $\mathbf{H}$ is complete.
Since $\mathbf{F}$ is complete, we have $\nodes\mathbf{F}\neq\varnothing,$
so $\nodes\mathbf{H}\neq\varnothing$
because either $\varphi=\varnothing$ and $\nodes\mathbf{H}=\nodes\mathbf{F}$
or ${0}_{\hspace{-0.2pt}\mathbf{G}}\in\nodes\mathbf{H}$ for some $\mathbf{G}\in\varphi.$
Suppose that $\!{B}_\mathbf{H}\in\branches\mathbf{H}$
and ${C}\coloneq {B}_\mathbf{H}\cap\hspace{0.4pt}\supp(\mathbf{F},\varphi).$
Then it follows by (\hspace{0.3pt}b\hspace{-0.5pt}) of Lemma~\ref{lem.reb.hybr} that
${C}$ is $\mathsurround=0pt\mathbf{H}$-cofinal in ${B}_\mathbf{H},$
and then ${C}\neq\varnothing$ since $\!{B}_\mathbf{H}\neq\varnothing.$
By (a), $\mathbf{H}$ is nonincreasing, so by (a) of Lemma~\ref{about foliage trees} we have
\begin{equation}\label{*143.1}
\fruit_{\hspace{0.4pt}\mathbf{H}}\hspace{-0.7pt}(\hspace{-0.5pt}{B}_\mathbf{H}\hspace{-0.5pt})=
\fruit_{\hspace{0.4pt}\mathbf{H}}\hspace{-0.7pt}(\hspace{-0.5pt}{C}).
\end{equation}
Since ${C}\subseteq\supp(\mathbf{F},\varphi),$
then by (\hspace{0.3pt}b\hspace{-0.5pt}) of Definition~\ref{def.f.hybr} we get
\begin{equation}\label{*143.2}
\fruit_{\hspace{0.4pt}\mathbf{H}}\hspace{-0.7pt}(\hspace{-0.5pt}{C})\,=\hspace{0.5pt}
\bigcap_{{x}\in\hspace{0.3pt}{C}}\big(\mathbf{F}_{\hspace{-1.2pt}{x}}{\setminus}\loss(\mathbf{F},\varphi)\big)\,=\,
\fruit_{\hspace{0.4pt}\mathbf{F}}\hspace{-0.7pt}(\hspace{-0.5pt}{C})
\setminus\loss(\mathbf{F},\varphi).
\end{equation}
Further, since ${C}$ is a chain in $\mathbf{H}$ and ${C}\subseteq\supp(\mathbf{F},\varphi),$ we see
by (\hspace{0.3pt}b\hspace{-0.5pt}) of Lemma~\ref{lem.hybr} that ${C}$ is a chain in $\mathbf{F}.$
Then by (c) of Lemma~\ref{about trees} there is $\!{B}_\mathbf{F}\in\branches\mathbf{F}$ such that
${C}\subseteq {B}_\mathbf{F},$
so we have
\begin{equation*}
\fruit_{\hspace{0.4pt}\mathbf{F}}\hspace{-0.7pt}(\hspace{-0.5pt}{C})\supseteq
\fruit_{\hspace{0.4pt}\mathbf{F}}\hspace{-0.7pt}(\hspace{-0.5pt}{B}_\mathbf{F}\hspace{-0.5pt})
\neq\varnothing
\end{equation*}
because $\mathbf{F}$ is complete. It follows that
\begin{equation*}
\fruit_{\hspace{0.4pt}\mathbf{H}}\hspace{-0.7pt}(\hspace{-0.5pt}{B}_\mathbf{H}\hspace{-0.5pt})\supseteq
\fruit_{\hspace{0.4pt}\mathbf{F}}\hspace{-0.7pt}(\hspace{-0.5pt}{B}_\mathbf{F}\hspace{-0.5pt})
\!\setminus\!\loss(\mathbf{F},\varphi)
\,\quad\mathsf{and}\quad
\fruit_{\hspace{0.4pt}\mathbf{F}}\hspace{-0.7pt}(\hspace{-0.5pt}{B}_\mathbf{F}\hspace{-0.5pt})
\neq\varnothing,
\end{equation*}
so it is enough to prove
$$\fruit_{\hspace{0.4pt}\mathbf{F}}\hspace{-0.7pt}(\hspace{-0.5pt}{B}_\mathbf{F}\hspace{-0.5pt})
\cap\loss(\mathbf{F},\varphi)=\varnothing.
$$
Then, by (\hspace{0.3pt}b\hspace{-0.5pt}) of Lemma~\ref{lem.reb}, it is enough to show that
for each $\mathbf{G}\in\varphi,$
\begin{equation}\label{*4.5}
\text{either}\qquad
\fruit_{\hspace{0.4pt}\mathbf{F}}\hspace{-0.7pt}(\hspace{-0.5pt}{B}_\mathbf{F}\hspace{-0.5pt})
\subseteq\mathbf{G}_{\hspace{-0.3pt}{0}_{\hspace{-0.6pt}\mathbf{G}}}
\qquad\text{or}\qquad
\fruit_{\hspace{0.4pt}\mathbf{F}}\hspace{-0.7pt}(\hspace{-0.5pt}{B}_\mathbf{F}\hspace{-0.5pt})
\cap\mathbf{F}_{\hspace{-1.2pt}{0}_{\hspace{-0.6pt}\mathbf{G}}}=\varnothing.
\end{equation}
To show it we consider two cases\textup{:}
\smallskip

\textit{\hspace{-1pt}Case 1.} ${0}_{\hspace{-0.2pt}\mathbf{G}}\in{B}_\mathbf{F}.$

First let us prove that ${0}_{\hspace{-0.2pt}\mathbf{G}}\in{B}_\mathbf{H}.$
If not, then by (\hspace{-0.4pt}d) of Lemma~\ref{about trees} there is ${b}\in{B}_\mathbf{H}$
such that ${b}\:{\parallel}_{\!\mathbf{H}}\:{0}_{\hspace{-0.2pt}\mathbf{G}}.$
Since ${C}$ is $\mathsurround=0pt\mathbf{H}$-cofinal in ${B}_\mathbf{H},$
there is ${c}\in{C}$ such that ${c}\geqslant_\mathbf{H}\!{b},$
so ${c}\:{\parallel}_{\!\mathbf{H}}\:{0}_{\hspace{-0.2pt}\mathbf{G}}$ by (\hspace{0.3pt}b\hspace{-0.5pt}) of Lemma~\ref{about trees}.
Both ${c}$ and ${0}_{\hspace{-0.2pt}\mathbf{G}}$ lie in $\supp(\mathbf{F},\varphi),$
so we have ${c}\:{\parallel}_{\!\mathbf{F}}\:{0}_{\hspace{-0.2pt}\mathbf{G}},$
but this contradicts ${c}\:{\in}\,{C}\,{\subseteq}\,{B}_\mathbf{F}\,{\ni}\,{0}_{\hspace{-0.2pt}\mathbf{G}}.$

Now ${0}_{\hspace{-0.2pt}\mathbf{G}}\in{B}_\mathbf{H}.$
Then by (a) of Lemma~\ref{lem.reb.hybr}, $\!{B}_\mathbf{H}\cap\nodes\mathbf{G}\in\branches\hspace{-0.5pt}\mathbf{G},$
so by (h) of Lemma~\ref{about trees}, $\!{B}_\mathbf{H}\cap\nodes\mathbf{G}={m}{\upfilledspoon}_{\!\mathbf{G}}$
for some ${m}\in\maxel\mathbf{G}.$
Since $\maxel\mathbf{G}\subseteq\supp(\mathbf{F},\varphi),$
we have ${m}\in{B}_\mathbf{H}\cap\hspace{0.4pt}\supp(\mathbf{F},\varphi)={C}\subseteq {B}_\mathbf{F},$ that is, ${m}\in{B}_\mathbf{F}.$
Then\vspace{-0.5ex}
$$\fruit_{\hspace{0.4pt}\mathbf{F}}\hspace{-0.7pt}(\hspace{-0.5pt}{B}_\mathbf{F}\hspace{-0.5pt})
\,\subseteq\,\mathbf{F}_{\hspace{-1.2pt}{m}}\hspace{1pt}=\,\mathbf{G}_{\hspace{-0.3pt}{m}}
\hspace{1pt}\subseteq\,\mathbf{G}_{\hspace{-0.3pt}{0}_{\hspace{-0.6pt}\mathbf{G}}}
$$
by (\hspace{-0.4pt}d) of Definition~\ref{def.fol.graft} and because $\mathbf{G}$ is nonincreasing,
so (\ref{*4.5}) satisfies.
\smallskip

\textit{\hspace{-1pt}Case 2.} ${0}_{\hspace{-0.2pt}\mathbf{G}}\notin {B}_\mathbf{F}.$

Then by (\hspace{-0.4pt}d) of Lemma~\ref{about trees}, there is ${b}\in{B}_\mathbf{F}$
such that ${b}\:{\parallel}_{\!\mathbf{F}}\:{0}_{\hspace{-0.2pt}\mathbf{G}}.$
Since $\mathbf{F}$ is splittable, we have
$\mathbf{F}_{\hspace{-1.2pt}{b}}\cap\mathbf{F}_{\hspace{-1.2pt}{0}_{\hspace{-0.6pt}\mathbf{G}}}\!=\varnothing.$
Then since ${b}\in{B}_\mathbf{F},$ we have
$\fruit_{\hspace{0.4pt}\mathbf{F}}\hspace{-0.7pt}(\hspace{-0.5pt}{B}_\mathbf{F}\hspace{-0.5pt})
\subseteq\mathbf{F}_{\hspace{-1.2pt}{b}},$
so (\ref{*4.5}) satisfies again.

\medskip
Now suppose that $\mathbf{F}$ is splittable and has strict branches, and each $\mathbf{G}\in\varphi$ has bounded chains.
We must prove that $\mathbf{H}$ has strict branches; suppose it does not.
Since $\mathbf{F}$ is complete, we already know that $\mathbf{H}$ is also complete, so there is $\!{B}_\mathbf{H}\in\branches\mathbf{H}$ such that
$|\fruit_{\hspace{0.4pt}\mathbf{H}}\hspace{-0.7pt}(\hspace{-0.5pt}{B}_\mathbf{H}\hspace{-0.5pt})\,|>\!1.$
Let ${C}$ and $\!{B}_\mathbf{F}$ be as above.
It follows by (\ref{*143.1}) and (\ref{*143.2}) that $|\fruit_{\hspace{0.4pt}\mathbf{F}}\hspace{-0.7pt}(\hspace{-0.5pt}{C})\,|>\!1,$ and
$|\fruit_{\hspace{0.4pt}\mathbf{F}}\hspace{-0.7pt}(\hspace{-0.5pt}{B}_\mathbf{F}\hspace{-0.5pt})\,|=1$
since $\mathbf{F}$ has strict branches, so we have
$\fruit_{\hspace{0.4pt}\mathbf{F}}\hspace{-0.7pt}(\hspace{-0.5pt}{C})\neq
\fruit_{\hspace{0.4pt}\mathbf{F}}\hspace{-0.7pt}(\hspace{-0.5pt}{B}_\mathbf{F}\hspace{-0.5pt}).$
Then, using (a) of Lemma~\ref{about foliage trees}, we see that
${C}$ is not $\hspace{1pt}\mathsurround=0pt\mathbf{F}$-cofinal in $\!{B}_\mathbf{F}$
because $\varnothing\neq{C}\subseteq {B}_\mathbf{F}\subseteq\nodes\mathbf{F}.$
Further, since ${C}\subseteq {B}_\mathbf{F},$ $\!{B}_\mathbf{F}$ is a chain in $\mathbf{F},$
and ${C}$ is not $\mathsurround=0pt\mathbf{F}$-cofinal in $\!{B}_\mathbf{F},$
it is not hard to show that there is ${x}\in{B}_\mathbf{F}$ such that
${C}\subseteq {x}\hspace{-0.3pt}{\upspoon}_{\!\mathbf{F}}.$
Now we consider two cases\textup{:}
\smallskip

\textit{\hspace{-1pt}Case 1.} ${x}\in\supp(\mathbf{F},\varphi).$

Then $ {x}\hspace{-0.3pt}{\upspoon}_{\!\mathbf{F}}\cap\hspace{0.4pt}\supp(\mathbf{F},\varphi)
\subseteq {x}\hspace{-0.3pt}{\upspoon}_{\!\mathbf{H}}.$
We have ${C}\subseteq {x}\hspace{-0.3pt}{\upspoon}_{\!\mathbf{F}}$
and ${C}\subseteq\supp(\mathbf{F},\varphi),$ so ${C}\subseteq {x}\hspace{-0.3pt}{\upspoon}_{\!\mathbf{H}}.$
Then ${C}{\upfootline}_{\!\mathbf{H}}\subseteq {x}\hspace{-0.3pt}{\upspoon}_{\!\mathbf{H}},$
so $\!{B}_\mathbf{H}\subseteq {x}\hspace{-0.3pt}{\upspoon}_{\!\mathbf{H}}$
because ${C}$ is $\hspace{1pt}\mathsurround=0pt\mathbf{H}$-cofinal in $\!{B}_\mathbf{H},$
whence $\!{B}_\mathbf{H}\subset {x}\hspace{-0.5pt}{\upfilledspoon}_{\!\mathbf{H}}.$
This contradicts $\!{B}_\mathbf{H}\in\branches\mathbf{H},$
since ${x}\hspace{-0.5pt}{\upfilledspoon}_{\!\mathbf{H}}$ is a chain in $\mathbf{H}.$
\smallskip

\textit{\hspace{-1pt}Case 2.} ${x}\notin\supp(\mathbf{F},\varphi).$

We have ${x}\in\nodes\mathbf{F}\setminus\supp(\mathbf{F},\varphi),$ so by definition of $\supp(\mathbf{F},\varphi)$
there is $\mathbf{G}\in\varphi$ such that ${x}\in\expl(\mathbf{F},\mathbf{G}).$
Then (e) of Lemma~\ref{lem.graft} implies
$${x}\hspace{-0.3pt}{\upspoon}_{\!\mathbf{F}}\cap\hspace{0.4pt}\supp(\mathbf{F},\varphi)\subseteq
(\hspace{-1pt}{0}_{\hspace{-0.2pt}\mathbf{G}}\hspace{-1.3pt})\hspace{-1pt}{\upfilledspoon}_{\!\mathbf{F}},$$
so ${C}\subseteq (\hspace{-1pt}{0}_{\hspace{-0.2pt}\mathbf{G}}\hspace{-1.3pt})\hspace{-1pt}{\upfilledspoon}_{\!\mathbf{F}}.$
Since ${0}_{\hspace{-0.2pt}\mathbf{G}}\in\supp(\mathbf{F},\varphi),$ we have
$$(\hspace{-1pt}{0}_{\hspace{-0.2pt}\mathbf{G}}\hspace{-1.3pt})\hspace{-1pt}{\upfilledspoon}_{\!\mathbf{F}}\cap\hspace{0.4pt}\supp(\mathbf{F},\varphi)
\subseteq(\hspace{-1pt}{0}_{\hspace{-0.2pt}\mathbf{G}}\hspace{-1.3pt})\hspace{-1pt}{\upfilledspoon}_{\!\mathbf{H}},$$
whence ${C}\subseteq(\hspace{-1pt}{0}_{\hspace{-0.2pt}\mathbf{G}}\hspace{-1.3pt})\hspace{-1pt}{\upfilledspoon}_{\!\mathbf{H}}.$
This implies ${C}{\upfootline}_{\!\mathbf{H}}\subseteq (\hspace{-1pt}{0}_\mathcal{G}\hspace{-1.4pt})\hspace{-1pt}{\upfilledspoon}_{\!\mathbf{H}},$
so ${B}_\mathbf{H}\subseteq (\hspace{-1pt}{0}_{\hspace{-0.2pt}\mathbf{G}}\hspace{-1.3pt})\hspace{-1pt}{\upfilledspoon}_{\!\mathbf{H}}$
because ${C}$ is $\mathsurround=0pt\mathbf{H}$-cofinal in ${B}_\mathbf{H}.$
Then $\!{B}_\mathbf{H}=(\hspace{-1pt}{0}_{\hspace{-0.2pt}\mathbf{G}}\hspace{-1.3pt})\hspace{-1pt}{\upfilledspoon}_{\!\mathbf{H}},$
since $\!{B}_\mathbf{H}$ is a branch in $\mathbf{H}$ and
$(\hspace{-1pt}{0}_{\hspace{-0.2pt}\mathbf{G}}\hspace{-1.3pt})\hspace{-1pt}{\upfilledspoon}_{\!\mathbf{H}}$ is a chain in $\mathbf{H}.$
Thus we have ${0}_{\hspace{-0.2pt}\mathbf{G}}\in{B}_\mathbf{H}.$
Now (a) of Lemma~\ref{lem.reb.hybr} with (h) of Lemma~\ref{about trees} imply that
${B}_\mathbf{H}\cap\nodes\mathbf{G}={m}{\upfilledspoon}_{\!\mathbf{G}}$
for some  ${m}\in\maxel\mathbf{G}.$
Then, since $\maxel\mathbf{G}\subseteq\supp(\mathbf{F},\varphi),$
we have $m\in\hspace{1pt}\!{B}_\mathbf{H}\cap\hspace{0.4pt}\supp(\mathbf{F},\varphi)={C}.$
So ${m}\in{C}$ and ${m}>_{\mathbf{F}}{0}_{\hspace{-0.2pt}\mathbf{G}}$ by (\hspace{-0.4pt}d) of Definition~\ref{def.graft}.
This contradicts ${C}\subseteq(\hspace{-1pt}{0}_{\hspace{-0.2pt}\mathbf{G}}\hspace{-1.3pt})\hspace{-1pt}{\upfilledspoon}_{\!\mathbf{F}}.$

\medskip
(e) Suppose that $\mathbf{F}$ and each $\mathbf{G}\in\varphi$  are open in a space ${X}.$ Then $\mathbf{H}$ is open in the subspace ${X}\setminus\loss(\mathbf{F},\varphi)$ of ${X}$ by (\hspace{0.3pt}b\hspace{-0.5pt}) of Definition~\ref{def.f.hybr}.
\end{proof}

\section{Application of the foliage hybrid operation}
\label{sect.LPB.in.terms.of.f.trees}

We will apply the foliage hybrid operation to a $\hspace{1pt}\mathsurround=0pt\pi$-tree $\mathbf{F}$ of a space ${X}\hspace{-1pt}$ in such a way that the $\fhybr(\mathbf{F},\varphi)$ will be a $\hspace{1pt}\mathsurround=0pt\pi$-tree on a subspace ${Y}\hspace{-1pt}$ of ${X}\hspace{-1pt}.$ To carry out this construction we need to answer (that is, to find some sufficient conditions) the following questions:

   \begin{itemize}
   \item [\textup{(i)}]
      When the $\fhybr(\mathbf{F},\varphi)$ is a Baire foliage tree on ${Y}$?
   \item [\textup{(ii)}]
      When the $\fhybr(\mathbf{F},\varphi)$ grows into ${Y}$?
   \end{itemize}
The answer to question (i) is given the following lemma:

\begin{lem}\label{lem.when.f.hybr.is.pi.tree}
    Suppose that $\mathbf{F}$ is a Baire foliage tree on a space ${X}$ and $\varphi$ is a consistent family of foliage grafts for $\mathbf{F}$ such that every $\mathbf{G}$ in $\varphi$ is $\aleph_0$-branching, locally strict, open in ${X},$ has bounded chains, and has $\height\mathbf{G}\leqslant\omega.$
    Then the $\fhybr(\mathbf{F},\varphi)$ is a Baire foliage tree on ${X}\setminus\loss(\mathbf{F},\varphi).$
\end{lem}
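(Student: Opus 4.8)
The plan is to verify directly the five clauses that, by Definition~\ref{def.B.f.tree}, make $\mathbf{H}\coloneq\fhybr(\mathbf{F},\varphi)$ a Baire foliage tree on ${Y}\coloneq{X}\setminus\loss(\mathbf{F},\varphi)$: namely that $\mathbf{H}$ is open in ${Y},$ is locally strict, is a foliage $\omega,\aleph_0$-tree, has strict branches, and satisfies $\mathbf{H}_{{0}_{\mathbf{H}}}={Y}.$ First I would unpack the hypothesis that $\mathbf{F}$ is a Baire foliage tree on ${X}$: this gives that $\mathbf{F}$ is open in ${X},$ locally strict, has strict branches, satisfies $\mathbf{F}_{{0}_{\mathbf{F}}}={X},$ and that $\skeleton\mathbf{F}$ is an $\omega,\aleph_0$-tree; by Lemma~\ref{about trees}(i) the latter means that $\skeleton\mathbf{F}$ has the least node, is $\aleph_0$-branching, satisfies $\maxel\mathbf{F}=\varnothing,$ and has $\height\mathbf{F}\leqslant\omega.$ Combined with the hypotheses on $\varphi$ (each $\mathbf{G}$ is $\aleph_0$-branching, locally strict, open in ${X},$ has bounded chains, and satisfies $\height\mathbf{G}\leqslant\omega$), this puts in place exactly the hypotheses of the earlier propositions.

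For the skeleton I would again use Lemma~\ref{about trees}(i): by parts (c), (d), (e), and (f) of Proposition~\ref{prop.hybr}, the tree $\skeleton\mathbf{H}$ has the least node (with ${0}_{\mathbf{H}}={0}_{\mathbf{F}}\in\supp(\mathbf{F},\varphi)$), satisfies $\maxel\mathbf{H}=\varnothing,$ is $\aleph_0$-branching, and has $\height\mathbf{H}\leqslant\omega,$ hence is an $\omega,\aleph_0$-tree. The foliage-theoretic clauses then follow from Proposition~\ref{prop.f.hybr}: part (c) yields local strictness of $\mathbf{H}$ (both $\mathbf{F}$ and every $\mathbf{G}$ are locally strict), and part (e) yields that $\mathbf{H}$ is open in the subspace ${X}\setminus\loss(\mathbf{F},\varphi)={Y}$ (both $\mathbf{F}$ and every $\mathbf{G}$ are open in ${X}$). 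The root clause is immediate: since ${0}_{\mathbf{H}}={0}_{\mathbf{F}}\in\supp(\mathbf{F},\varphi),$ clause (b) of Definition~\ref{def.f.hybr} gives $\mathbf{H}_{{0}_{\mathbf{H}}}=\mathbf{F}_{{0}_{\mathbf{F}}}\setminus\loss(\mathbf{F},\varphi)={X}\setminus\loss(\mathbf{F},\varphi)={Y}.$

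The single clause requiring more than a direct citation is \emph{strict branches}, which I would obtain from part (d) of Proposition~\ref{prop.f.hybr}. That result asks $\mathbf{F}$ to have strict branches \emph{and} to be splittable, and each $\mathbf{G}\in\varphi$ to have bounded chains. The bounded-chains condition is a hypothesis and strict branches are part of $\mathbf{F}$ being a Baire foliage tree, so the only missing ingredient is splittability of $\mathbf{F}.$ For this I would invoke Lemma~\ref{about foliage trees}(b): since $\mathbf{F}$ has the least node and $\height\mathbf{F}\leqslant\omega,$ local strictness of $\mathbf{F}$ forces $\mathbf{F}$ to be splittable. (Local strictness together with $\height\mathbf{F}\leqslant\omega$ also makes $\mathbf{F}$ nonincreasing, so that $\varphi$ is a genuine consistent family of foliage grafts and $\fhybr(\mathbf{F},\varphi)$ is well defined.) I expect this translation between local strictness and splittability to be the only nonroutine point; the remainder is a matter of checking that the hypotheses of Propositions~\ref{prop.hybr} and~\ref{prop.f.hybr} hold and reading off their conclusions against the checklist in Definition~\ref{def.B.f.tree}.
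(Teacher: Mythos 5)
Your proposal is correct and takes essentially the same route as the paper's proof: Proposition~\ref{prop.hybr}(c)--(f) together with Lemma~\ref{about trees}(i) for the skeleton being an $\omega,\aleph_0$-tree with ${0}_{\mathbf{H}}={0}_{\mathbf{F}},$ clause (\hspace{0.3pt}b\hspace{-0.5pt}) of Definition~\ref{def.f.hybr} for the root leaf, Lemma~\ref{about foliage trees}(\hspace{0.3pt}b\hspace{-0.5pt}) to extract splittability of $\mathbf{F}$ from local strictness, and Proposition~\ref{prop.f.hybr}(c)--(e) for local strictness, strict branches, and openness in ${X}\setminus\loss(\mathbf{F},\varphi).$ The differences are purely expository (your side remark that local strictness plus $\height\mathbf{F}\leqslant\omega$ makes $\mathbf{F}$ nonincreasing, so the hypothesis on $\varphi$ is meaningful, is a correct and harmless addition), so nothing needs to be changed.
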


\begin{proof}
   Let $\mathbf{H}\coloneq\fhybr(\mathbf{F},\varphi).$ It follows from (c)--(f) of Proposition~\ref{prop.hybr} and (i) of Lemma~\ref{about trees} that $\mathbf{H}$ is a foliage $\omega,\aleph_{{0}}$-tree and  ${0}_\mathbf{H}={0}_\mathbf{F},$ so $\mathbf{H}_{\hspace{0.5pt}{0}_{\hspace{-0.5pt}\mathbf{H}}}=
   \mathbf{F}_{\hspace{-1pt}{0}_{\hspace{-0.5pt}\mathbf{F}}}\setminus\loss(\mathbf{F},\varphi)=
   {X}\setminus\loss(\mathbf{F},\varphi).$ By (b) of Lemma~\ref{about foliage trees}, $\mathbf{F}$ is splittable, and then $\mathbf{H}$ is open in ${X}\setminus\loss(\mathbf{F},\varphi),$ locally strict, and has strict branches by (c)--(e) of Proposition~\ref{prop.f.hybr}.
\end{proof}

The answer to question (ii) is given in Lemma~\ref{l.grows.into.subspace}, and %
this answer raises another question: When the $\fhybr(\mathbf{F},\varphi)$ shoots into $\mathbf{F}$?
The answer to this question is given in Lemma~\ref{lem.when.f.hyb.sprts.thrgh}.

\begin{deff}\label{def.sprt.through}
   Let $\mathbf{H}$ and $\mathbf{F}$ be foliage trees. Then

   \begin{itemize}
   \item [\ding{46}\ ]
      $\mathsurround=0pt
      \mathbf{H}\,$ \textbf{shoots into}  $\mathbf{F}
      \quad{\colon}{\longleftrightarrow}\quad
      \forall{p}\,{\in}\flesh\hspace{-1pt}\mathbf{H}\ \forall{y}\,{\in}\scope_\mathbf{F}\hspace{-0.5pt}(\hspace{-0.3pt}{p}\hspace{-0.5pt})\ \exists{x}\,{\in}\scope_\mathbf{H}\hspace{-0.5pt}(\hspace{-0.3pt}{p}\hspace{-0.5pt})
      \;\big[\hspace{-1pt}\shoot_\mathbf{H}\hspace{-0.5pt}(\hspace{-0.7pt}{x}\hspace{-1pt})
      \gg\shoot_\mathbf{F}\hspace{-0.5pt}({y})\big].$
   \end{itemize}
\end{deff}

\begin{lem}\label{l.grows.into.subspace}
   Suppose that a foliage tree $\mathbf{H}\hspace{-1pt}$ shoots into a foliage tree $\mathbf{F}\hspace{-1pt}$
   and $\mathbf{F}\hspace{-1pt}$ grows into a space ${X}.$
   Then
   $\mathbf{H}\hspace{-1pt}$ grows into the subspace ${X}\hspace{-1pt}\cap\flesh\hspace{-1pt}\mathbf{H}$ of $\hspace{1pt}{X}.$
\end{lem}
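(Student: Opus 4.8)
The plan is to unfold the definition of ``grows into'' for the subspace $Y\coloneq X\cap\flesh\mathbf{H}$ and reduce it, by means of the two hypotheses, to the definition of ``grows into'' for $X$. Concretely, I fix $p\in Y$ and a neighbourhood $V\in\nbhds(p,Y)$, and I must produce $x\in\scope_{\mathbf{H}}(p)$ with $\shoot_{\mathbf{H}}(x)\gg\{V\}$. The first move is to pass from the subspace $Y$ back to the ambient space $X$: since $V$ is a neighbourhood of $p$ in $Y$, the subspace topology gives a set $O$ open in $X$ with $p\in O$ and $O\cap Y\subseteq V$; setting $U\coloneq O$ I obtain $U\in\nbhds(p,X)$ with $U\cap Y\subseteq V$. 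I record that $p\in X$ and $p\in\flesh\mathbf{H}$, both of which follow at once from $p\in Y=X\cap\flesh\mathbf{H}$.

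Next I would feed $U$ into the hypothesis that $\mathbf{F}$ grows into $X$: since $p\in X$ and $U\in\nbhds(p,X)$, this yields $y\in\scope_{\mathbf{F}}(p)$ with $\shoot_{\mathbf{F}}(y)\gg\{U\}$. Then, using $p\in\flesh\mathbf{H}$ and $y\in\scope_{\mathbf{F}}(p)$, the hypothesis that $\mathbf{H}$ shoots into $\mathbf{F}$ (Definition~\ref{def.sprt.through}) produces $x\in\scope_{\mathbf{H}}(p)$ with $\shoot_{\mathbf{H}}(x)\gg\shoot_{\mathbf{F}}(y)$. Applying the two $\pi$-refinements in turn (note $U\neq\varnothing$ because $p\in U$) I extract concrete witnesses: some $F\in\shoot_{\mathbf{F}}(y)\setminus\{\varnothing\}$ with $F\subseteq U$, and then some $G\in\shoot_{\mathbf{H}}(x)\setminus\{\varnothing\}$ with $G\subseteq F\subseteq U$.

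The crux is to check that this very $G$ witnesses $\shoot_{\mathbf{H}}(x)\gg\{V\}$, i.e. that $G\subseteq V$. Here I would use that every member of $\shoot_{\mathbf{H}}(x)$ is of the form $\flesh_{\mathbf{H}}(C)$ for a cofinite $C\subseteq\sons_{\mathbf{H}}(x)$, hence is a union of leaves of $\mathbf{H}$ and therefore automatically contained in $\flesh\mathbf{H}$. Combining $G\subseteq\flesh\mathbf{H}$ with $G\subseteq U\subseteq X$ gives $G\subseteq X\cap\flesh\mathbf{H}=Y$, whence $G\subseteq U\cap Y\subseteq V$. Since $G\neq\varnothing$ and $G\in\shoot_{\mathbf{H}}(x)$, this is exactly $\shoot_{\mathbf{H}}(x)\gg\{V\}$, and together with $x\in\scope_{\mathbf{H}}(p)$ it completes the verification that $\mathbf{H}$ grows into $Y$. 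I do not expect any serious obstacle; the only step requiring genuine care is this last one, where the specific form $Y=X\cap\flesh\mathbf{H}$ of the subspace is precisely what lets me upgrade ``$G\subseteq U$'' to ``$G\subseteq V$''. Thus the intersection with $\flesh\mathbf{H}$ in the definition of $Y$ is doing real work and must not be dropped, and it is this matching between the domain of the leaves of $\mathbf{H}$ and the subspace $Y$ that makes the statement go through.
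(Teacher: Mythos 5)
Your proof is correct and follows essentially the same route as the paper's: pass the subspace neighbourhood up to an ambient one, apply ``grows into'' to get $y$, apply ``shoots into'' to get $x$, compose the two $\pi$-refinements to extract a nonempty $G\in\shoot_{\mathbf{H}}(x)$ inside the ambient neighbourhood, and then use $G\subseteq\flesh\mathbf{H}$ to pull $G$ back into the subspace. The only differences are cosmetic (your names $U$ and $V$ are swapped relative to the paper, and you use $U\cap Y\subseteq V$ where the paper uses equality), so nothing further is needed.
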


\begin{proof} Let  ${Y}\coloneq{X}\hspace{-1pt}\cap\flesh\hspace{-1pt}\mathbf{H},$ ${p}\in{Y},$ and ${U}\!\in\nbhds({p},{Y}\hspace{-0.5pt}).$
Then there is ${V}\!\in\nbhds({p},{X}\hspace{-0.5pt})$
such that ${U}={V}\cap{Y},$
and there is ${y}\in\scope_\mathbf{F}\hspace{-0.5pt}(\hspace{-0.3pt}{p}\hspace{-0.5pt})$
such that $\shoot_\mathbf{F}\hspace{-0.5pt}({y})\gg\{{V}\}$
because $\mathbf{F}$ grows into ${X}.$
Since $\mathbf{H}$ shoots into $\mathbf{F},$
there is ${x}\in\scope_\mathbf{H}\hspace{-0.5pt}(\hspace{-0.3pt}{p}\hspace{-0.5pt})$
with the property $\shoot_\mathbf{H}\hspace{-0.5pt}(\hspace{-0.7pt}{x}\hspace{-1pt})
\gg\shoot_\mathbf{F}\hspace{-0.5pt}({y}).$
It follows that there is ${G}\in\shoot_\mathbf{H}\hspace{-0.5pt}(\hspace{-0.7pt}{x}\hspace{-1pt})\setminus\{\varnothing\}$ such that ${G}\subseteq{V}.$
Since ${G}\subseteq\flesh\mathbf{H},$
then ${G}\subseteq{V}\hspace{-1pt}\cap\flesh\mathbf{H}
\subseteq{X}\hspace{-1pt}\cap\flesh\mathbf{H}={Y},$
so ${G}\subseteq{V}\cap{Y}\!={U}.$
Therefore we have found
${x}\in\scope_\mathbf{H}\hspace{-0.5pt}(\hspace{-0.3pt}{p}\hspace{-0.5pt})$
such that $\shoot_{\mathbf{H}}\hspace{-0.5pt}(\hspace{-0.7pt}{x}\hspace{-1pt})\gg\{{U}\}.$
\end{proof}

\begin{deff}\label{def.pres.shoots}
   Let $\mathbf{F}$ be a nonincreasing foliage tree and let $\mathbf{G}$ be a foliage graft for $\mathbf{F}.$
   Then
   $\mathbf{G}$ \textbf{preserves shoots} of $\mathbf{F}$ iff

   \begin{itemize}
   \item[\ding{226}\ ]
      for each ${p}\in\flesh\hspace{-1pt}\mathbf{G}$ and
      for each ${y}\in\scope_\mathbf{F}\hspace{-0.5pt}(\hspace{-0.3pt}{p}\hspace{-0.5pt})
      \cap\big(\{{0}_{\hspace{-0.2pt}\mathbf{G}}\}\cup\hspace{0.4pt}\expl(\mathbf{F},\mathbf{G})\big)$
   \item[\ding{226}\ ]
      there is  ${x}\in\scope_\mathbf{G}\hspace{-0.5pt}(\hspace{-0.3pt}{p}\hspace{-0.5pt})
      \cap\big(\{{0}_{\hspace{-0.2pt}\mathbf{G}}\}\cup\hspace{0.4pt}\impl\mathbf{G}\big)$
      such that
      \begin{itemize}
      \item [\ding{51}\ \ ]
         $\mathsurround=0pt
         \shoot_{\hspace{-0.2pt}\mathbf{G}\hspace{-0.8pt}}(\hspace{-0.7pt}{x}\hspace{-1pt})
         \vphantom{\bar{\langle\rangle}}\gg\shoot_\mathbf{F}\hspace{-0.5pt}({y}).$
      \end{itemize}
   \end{itemize}
\end{deff}

\begin{lem}\label{lem.when.f.hyb.sprts.thrgh}
   Suppose that

   \begin{itemize}
   \item[\ding{226}\ ]
      $\mathsurround=0pt
      \mathbf{F}$ is a nonincreasing foliage tree, 
   \item[\ding{226}\ ]
      $\mathsurround=0pt
      \varphi$ is a consistent family of foliage grafts for $\mathbf{F},$
   \item[\ding{226}\ ]
      the foliage hybrid of $\,\mathbf{F}$ and $\varphi$ has nonempty leaves, and
   \item[\ding{226}\ ]
      each $\mathbf{G}\in\varphi$ preserves shoots of $\,\mathbf{F}.$
   \end{itemize}
   Then the foliage hybrid of $\,\mathbf{F}$ and $\varphi$ shoots into $\mathbf{F}.$
\end{lem}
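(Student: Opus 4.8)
The plan is to unwind Definition~\ref{def.sprt.through}: fixing ${p}\in\flesh\hspace{-1pt}\mathbf{H}$ and ${y}\in\scope_\mathbf{F}\hspace{-0.5pt}({p}),$ I must produce ${x}\in\scope_\mathbf{H}\hspace{-0.5pt}({p})$ with $\shoot_\mathbf{H}\hspace{-0.5pt}({x})\gg\shoot_\mathbf{F}\hspace{-0.5pt}({y}).$ The first remark is that, by (\hspace{0.3pt}b\hspace{-0.5pt}) of Definition~\ref{def.f.hybr} together with (a) of Lemma~\ref{lem.reb}, every leaf of $\mathbf{H}$ is disjoint from $\loss(\mathbf{F},\varphi),$ so $\flesh\hspace{-1pt}\mathbf{H}\cap\loss(\mathbf{F},\varphi)=\varnothing$ and in particular ${p}\notin\loss(\mathbf{F},\varphi).$ Since ${y}\in\nodes\mathbf{F}$ and $\supp(\mathbf{F},\varphi)=\nodes\mathbf{F}\setminus\bigcup_{\mathbf{G}\in\varphi}\expl(\mathbf{F},\mathbf{G}),$ I would split into the case ${y}\in\supp(\mathbf{F},\varphi)\setminus\{{0}_{\mathbf{G}}:\mathbf{G}\in\varphi\}$ and the case ${y}\in\{{0}_{\mathbf{G}}\}\cup\expl(\mathbf{F},\mathbf{G})$ for some $\mathbf{G}\in\varphi;$ these exhaust $\nodes\mathbf{F}.$

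The engine of both cases is a single local refinement principle: if ${x}\in\nodes\mathbf{H}$ and $\mathbf{K}$ is either $\mathbf{F}$ or a graft $\mathbf{G}\in\varphi$ with $\sons_\mathbf{H}({x})=\sons_\mathbf{K}({x})$ and $\mathbf{H}_{s}=\mathbf{K}_{s}\setminus\loss(\mathbf{F},\varphi)$ for each ${s}\in\sons_\mathbf{H}({x}),$ then $\shoot_\mathbf{H}\hspace{-0.5pt}({x})\gg\shoot_\mathbf{K}\hspace{-0.5pt}({x}).$ Indeed, for a cofinite ${C}\subseteq\sons_\mathbf{H}({x})=\sons_\mathbf{K}({x})$ one has $\flesh_\mathbf{H}({C})=\flesh_\mathbf{K}({C})\setminus\loss(\mathbf{F},\varphi)\subseteq\flesh_\mathbf{K}({C}),$ and whenever $\flesh_\mathbf{K}({C})\neq\varnothing$ the set ${C}$ is nonempty, whence $\flesh_\mathbf{H}({C})\neq\varnothing$ because $\mathbf{H}$ has nonempty leaves and ${C}\subseteq\nodes\mathbf{H};$ thus every nonempty member of $\shoot_\mathbf{K}\hspace{-0.5pt}({x})$ contains a nonempty member of $\shoot_\mathbf{H}\hspace{-0.5pt}({x}).$

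In the first case I would take ${x}\coloneq{y}.$ Here ${y}\in\supp(\mathbf{F},\varphi)\subseteq\nodes\mathbf{H}$ and ${y}\notin\{{0}_{\mathbf{G}}:\mathbf{G}\in\varphi\},$ so $\sons_\mathbf{H}({y})=\sons_\mathbf{F}({y})$ by (a) of Proposition~\ref{prop.hybr}; moreover each such son lies in $\nodes\mathbf{H}\cap\nodes\mathbf{F}=\supp(\mathbf{F},\varphi)$ by (\hspace{0.3pt}b\hspace{-0.5pt}) of Lemma~\ref{lem.hybr}, so its $\mathbf{H}$-leaf equals $\mathbf{F}_{s}\setminus\loss(\mathbf{F},\varphi)$ by (\hspace{0.3pt}b\hspace{-0.5pt}) of Definition~\ref{def.f.hybr}. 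The refinement principle with $\mathbf{K}\coloneq\mathbf{F}$ yields $\shoot_\mathbf{H}\hspace{-0.5pt}({y})\gg\shoot_\mathbf{F}\hspace{-0.5pt}({y}),$ while ${p}\in\mathbf{F}_{y}\setminus\loss(\mathbf{F},\varphi)=\mathbf{H}_{y}$ gives ${y}\in\scope_\mathbf{H}\hspace{-0.5pt}({p}).$

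The second case is where the hypothesis that each $\mathbf{G}$ preserves shoots enters, and the step I expect to be the main obstacle is verifying that this hypothesis is applicable, i.e.\ that ${p}\in\flesh\hspace{-1pt}\mathbf{G}.$ Since $\mathbf{G}$ is nonincreasing with least node ${0}_{\mathbf{G}},$ one has $\flesh\hspace{-1pt}\mathbf{G}=\mathbf{G}_{{0}_{\mathbf{G}}};$ and since ${y}\geqslant_\mathbf{F}{0}_{\mathbf{G}}$ (because ${y}={0}_{\mathbf{G}}$ or ${y}\in\expl(\mathbf{F},\mathbf{G})\subseteq({0}_{\mathbf{G}}){\downspoon}_{\mathbf{F}}$) and $\mathbf{F}$ is nonincreasing, ${p}\in\mathbf{F}_{y}\subseteq\mathbf{F}_{{0}_{\mathbf{G}}}.$ As ${p}\notin\loss(\mathbf{F},\varphi)\supseteq\cut(\mathbf{F},\mathbf{G})=\mathbf{F}_{{0}_{\mathbf{G}}}\setminus\mathbf{G}_{{0}_{\mathbf{G}}},$ this forces ${p}\in\mathbf{G}_{{0}_{\mathbf{G}}}=\flesh\hspace{-1pt}\mathbf{G}.$ Now ${y}\in\scope_\mathbf{F}\hspace{-0.5pt}({p})\cap\big(\{{0}_{\mathbf{G}}\}\cup\expl(\mathbf{F},\mathbf{G})\big),$ so Definition~\ref{def.pres.shoots} provides ${x}\in\scope_\mathbf{G}\hspace{-0.5pt}({p})\cap\big(\{{0}_{\mathbf{G}}\}\cup\impl\mathbf{G}\big)$ with $\shoot_\mathbf{G}\hspace{-0.5pt}({x})\gg\shoot_\mathbf{F}\hspace{-0.5pt}({y}).$ Then ${x}\in\nodes\mathbf{G}\subseteq\nodes\mathbf{H}$ and ${p}\in\mathbf{G}_{x}\setminus\loss(\mathbf{F},\varphi)=\mathbf{H}_{x}$ by (a) of Lemma~\ref{lem.reb}, giving ${x}\in\scope_\mathbf{H}\hspace{-0.5pt}({p});$ and since $\sons_\mathbf{H}({x})=\sons_\mathbf{G}({x})$ by (a) of Proposition~\ref{prop.hybr} with $\mathbf{H}_{s}=\mathbf{G}_{s}\setminus\loss(\mathbf{F},\varphi)$ on each son (again by (a) of Lemma~\ref{lem.reb}), the refinement principle with $\mathbf{K}\coloneq\mathbf{G}$ gives $\shoot_\mathbf{H}\hspace{-0.5pt}({x})\gg\shoot_\mathbf{G}\hspace{-0.5pt}({x}).$ Chaining the two $\pi$-refinements by transitivity of $\gg$ then yields $\shoot_\mathbf{H}\hspace{-0.5pt}({x})\gg\shoot_\mathbf{F}\hspace{-0.5pt}({y}),$ completing the argument. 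Apart from the membership ${p}\in\flesh\hspace{-1pt}\mathbf{G},$ the only points demanding care are the systematic use of the nonempty-leaves hypothesis to keep refining sets nonempty and the bookkeeping that the sons of ${x}$ carry exactly the leaves $\mathbf{K}_{s}\setminus\loss(\mathbf{F},\varphi).$
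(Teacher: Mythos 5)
Your proposal is correct and follows essentially the same route as the paper's proof: the same case split on ${y}\in\supp(\mathbf{F},\varphi)\setminus\{{0}_{\mathbf{G}}:\mathbf{G}\in\varphi\}$ versus ${y}\in\{{0}_{\mathbf{G}}\}\cup\expl(\mathbf{F},\mathbf{G}),$ the same use of (a) of Proposition~\ref{prop.hybr}, (\hspace{0.3pt}b\hspace{-0.5pt}) of Lemma~\ref{lem.hybr}, (a) of Lemma~\ref{lem.reb}, the preserves-shoots hypothesis, and transitivity of $\gg.$ Your explicit ``local refinement principle'' and the verification ${p}\in\mathbf{G}_{{0}_{\hspace{-0.5pt}\mathbf{G}}}$ via $\cut(\mathbf{F},\mathbf{G})\subseteq\loss(\mathbf{F},\varphi)$ are exactly the steps the paper carries out, just stated more explicitly.
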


    \begin{proof} Let $\mathbf{H}\coloneq\fhybr(\mathbf{F},\varphi),$
    ${p}\,{\in}\flesh\hspace{-1pt}\mathbf{H},$ and ${y}\,{\in}\scope_\mathbf{F}\hspace{-0.5pt}(\hspace{-0.3pt}{p}\hspace{-0.5pt}).$
    We consider two cases\textup{:}
    \smallskip

    \textit{\hspace{-1pt}Case 1.}
    ${y}\in\supp(\mathbf{F},\varphi)\,{\setminus}\,\{{0}_{\hspace{-0.2pt}\mathbf{G}}:\mathbf{G}\in\varphi\}.$

    By (a) of Proposition~\ref{prop.hybr} we have
    $\sons_{\hspace{0.2pt}\mathbf{H}\hspace{-0.7pt}}({y})=\sons_{\hspace{0.5pt}\mathbf{F}\hspace{-0.5pt}}({y}),$ so 
    $$\sons_{\hspace{0.2pt}\mathbf{H}\hspace{-0.7pt}}({y})\,\subseteq\,\nodes\mathbf{F}\cap\nodes\mathbf{H}
    \,=\,\supp(\mathbf{F},\varphi)$$
    by (\hspace{0.3pt}b\hspace{-0.5pt}) of Lemma~\ref{lem.hybr}. Then by (\hspace{0.3pt}b\hspace{-0.5pt}) of Definition~\ref{def.f.hybr} we have
    $$\mathbf{H}_{{y}}=\,\mathbf{F}_{\hspace{-1.2pt}{y}}\,{\setminus}\,\loss(\mathbf{F},\varphi)
    \quad\textsf{ and }\quad
    \forall{s}\in\hspace{0.4pt}\sons_{\hspace{0.2pt}\mathbf{H}\hspace{-0.7pt}}({y})
    \,\big[\mathbf{H}_{{s}}=\,\mathbf{F}_{\hspace{-1.2pt}{s}}{\setminus}\loss(\mathbf{F},\varphi)\big].
    $$
    Further, ${p}\in\mathbf{F}_{\hspace{-1.2pt}{y}}$ and ${p}\,{\in}\flesh\hspace{-1pt}\mathbf{H},$ so ${p}\notin\loss(\mathbf{F},\varphi),$ whence ${p}\in\mathbf{H}_{{y}},$ that is,
    ${y}\in\scope_\mathbf{H}\hspace{-0.5pt}(\hspace{-0.3pt}{p}\hspace{-0.5pt}).$
    Now, for ${x}\coloneq{y}$ and for each ${s}\in\hspace{0.4pt}\sons_{\hspace{0.2pt}\mathbf{H}\hspace{-0.7pt}}(\hspace{-0.7pt}{x}\hspace{-1pt})
    =\sons_{\hspace{0.2pt}\mathbf{H}\hspace{-0.7pt}}({y})
    =\sons_{\hspace{0.5pt}\mathbf{F}\hspace{-0.5pt}}({y}),$ we have $\varnothing\neq\mathbf{H}_{{s}}\subseteq\mathbf{F}_{\hspace{-1.2pt}{s}}.$
    This implies
    $\shoot_\mathbf{H}(\hspace{-0.7pt}{x}\hspace{-1pt})\gg\shoot_\mathbf{F}\hspace{-0.5pt}({y}).$
    \smallskip

    \textit{\hspace{-1pt}Case 2.} $\exists\mathbf{G}\,{\in}\,\varphi
    \;\big[\,{y}\in\{{0}_{\hspace{-0.2pt}\mathbf{G}}\}\cup\hspace{0.4pt}\expl(\mathbf{F},\mathbf{G})\big].$

    The foliage tree $\mathbf{F}$ is nonincreasing, ${p}\in\mathbf{F}_{\hspace{-1.2pt}{y}},$ and ${y}\geqslant_{\mathbf{F}}{0}_{\hspace{-0.2pt}\mathbf{G}},$
    so ${p}\in\mathbf{F}_{\hspace{-1.2pt}{{0}_{\hspace{-0.5pt}\mathbf{G}}}}.$ We have
    ${p}\,{\in}\flesh\hspace{-1pt}\mathbf{H},$ so  ${p}\notin\loss(\mathbf{F},\varphi),$
    hence ${p}\in\mathbf{F}_{\hspace{-1.2pt}{{0}_{\hspace{-0.5pt}\mathbf{G}}}}$ implies ${p}\in\mathbf{G}_{\hspace{-0.3pt}{{0}_{\hspace{-0.5pt}\mathbf{G}}}}.$
    Then ${p}\in\flesh\hspace{-1pt}\mathbf{G}$ and
    $$
    {y}\in\scope_\mathbf{F}\hspace{-0.5pt}(\hspace{-0.3pt}{p}\hspace{-0.5pt})\cap
    \big(\{{0}_{\hspace{-0.2pt}\mathbf{G}}\}\cup\hspace{0.4pt}\expl(\mathbf{F},\mathbf{G})\big),
    $$
    so, since $\mathbf{G}$ preserves shoots $\mathbf{F},$ there is
    $$
    {x}\in\scope_\mathbf{G}\hspace{-0.5pt}(\hspace{-0.3pt}{p}\hspace{-0.5pt})\cap
    \big(\{{0}_{\hspace{-0.2pt}\mathbf{G}}\}\cup\hspace{0.4pt}\impl\mathbf{G}\big)
    $$ such that
    $\shoot_{\hspace{-0.2pt}\mathbf{G}\hspace{-0.8pt}}(\hspace{-0.7pt}{x}\hspace{-1pt})
    \gg\shoot_\mathbf{F}\hspace{-0.5pt}({y}).    $
    Again, by (a) of Proposition~\ref{prop.hybr} we have
    $\sons_{\hspace{0.2pt}\mathbf{H}\hspace{-0.7pt}}(\hspace{-0.7pt}{x}\hspace{-1pt})=
    \sons_{\hspace{0.2pt}\mathbf{G}\hspace{-1.2pt}}(\hspace{-0.7pt}{x}\hspace{-1pt}),$ so
    $\sons_{\hspace{0.2pt}\mathbf{H}\hspace{-0.7pt}}(\hspace{-0.7pt}{x}\hspace{-1pt})\subseteq
    \nodes\mathbf{G}.$
    Then by (a) of Lemma~\ref{lem.reb} we have
    $$\mathbf{H}_{{x}}=\mathbf{G}_{\hspace{-0.3pt}{x}}\,{\setminus}\,\loss(\mathbf{F},\varphi)
    \quad\textsf{ and }\quad
    \forall{s}\in\hspace{0.4pt}\sons_{\hspace{0.2pt}\mathbf{H}\hspace{-0.7pt}}(\hspace{-0.7pt}{x}\hspace{-1pt})\,\big[\mathbf{H}_{{s}}=\mathbf{G}_{\hspace{-0.3pt}{s}}{\setminus}\loss(\mathbf{F},\varphi)\big].
    $$
    We have ${p}\in\mathbf{G}_{\hspace{-0.3pt}{x}}$ and ${p}\notin\loss(\mathbf{F},\varphi),$ so ${p}\in\mathbf{H}_{{x}},$ that is,
    ${x}\in\scope_\mathbf{H}\hspace{-0.5pt}(\hspace{-0.3pt}{p}\hspace{-0.5pt}).$
    Now, for each ${s}\in\hspace{0.4pt}\sons_{\hspace{0.2pt}\mathbf{H}\hspace{-0.7pt}}(\hspace{-0.7pt}{x}\hspace{-1pt})=\sons_{\hspace{0.2pt}\mathbf{G}\hspace{-1.2pt}}(\hspace{-0.7pt}{x}\hspace{-1pt}),$ we have $\varnothing\neq\mathbf{H}_{{s}}\subseteq\mathbf{G}_{\hspace{-0.3pt}{s}}.$
    This implies
    $\shoot_\mathbf{H}(\hspace{-0.7pt}{x}\hspace{-1pt})\gg
    \shoot_{\hspace{-0.2pt}\mathbf{G}\hspace{-0.8pt}}(\hspace{-0.7pt}{x}\hspace{-1pt}),$
    so $\shoot_\mathbf{H}(\hspace{-0.7pt}{x}\hspace{-1pt})\gg
    \shoot_\mathbf{F}\hspace{-0.5pt}({y})$ because $\gg$ is transitive.
    \end{proof}

\section{Main construction}

In this section we prove Theorem~\ref{main.theorem}, which can be viewed as
the main technical result of this paper. This theorem is a statement about the Baire space $\mathcal{N}$ and the standard foliage tree of ${}^\omega\hspace{-1pt}\omega,$ which we denote by $\mathbf{S}$ --- see Notation~\ref{def.stdrt.f.tr.of.B.sp}. The connection between $\mathcal{N}$ with $\mathbf{S}$ on the one hand and a space ${X}$ with a $\hspace{1pt}\mathsurround=0pt\pi$-tree on the other hand is explained by Lemma~\ref{lem.pi.and.B.f.trees.vs.S}.

\begin{notation}\label{not.pi.dense}
   Let ${A}\subseteq{}^\omega\hspace{-1pt}\omega$ and ${x}\in{}^{<\hspace{0.2pt}\omega}\hspace{-1pt}\omega.$ Recall that $\mathbf{S}_{x}=\{{p}\,{\in}\,{}^\omega\hspace{-1pt}\omega:{x}\subseteq{p}\}.$
   Then

   \begin{itemize}
   \item [\ding{46}\ ]
      ${A}$ is $\hspace{1pt}\mathsurround=0pt\pi$-\textbf{dense} at ${x}
      \quad{\colon}{\longleftrightarrow}\quad
      \forall{y}\in{}^{<\hspace{0.2pt}\omega}\hspace{-1pt}\omega\:
      \big[\,{y}\supseteq{x}
      \ \to\ \big|\{{n}\in\omega:\mathbf{S}_{{y}\hspace{0.7pt}\hspace{0.6pt}
      \hat{}\hspace{1pt}{\scriptscriptstyle\langle}\hspace{-0.4pt}{n}\hspace{-0.4pt}{\scriptscriptstyle\rangle}}
      \subseteq{A}\}\big|
      =\aleph_{0}\,\big];$
   \item [\ding{46}\ ]
      ${A}$ is $\hspace{1pt}\mathsurround=0pt\pi$-\textbf{dense} in the Baire space
      $\quad{\colon}{\longleftrightarrow}\quad
      \forall{y}\in{}^{<\hspace{0.2pt}\omega}\hspace{-1pt}\omega\:
      \big[\,\big|\{{n}\in\omega:\mathbf{S}_{{y}\hspace{0.7pt}\hspace{0.6pt}\hat{}
      \hspace{1pt}{\scriptscriptstyle\langle}\hspace{-0.4pt}{n}\hspace{-0.4pt}{\scriptscriptstyle\rangle}}
      \subseteq{A}\}\big|
      =\aleph_{0}\,\big].$
   \end{itemize}
\end{notation}

\begin{rem}\label{rem.pi.dense}
   \begin{itemize}
   \item [\textup{(a)}]
      If ${K}$ is a compact subset of $\mathcal{N},$
      then ${}^\omega\hspace{-1pt}\omega\,{\setminus}\,{K}$ is an open $\hspace{1pt}\mathsurround=0pt\pi$-dense subset of $\mathcal{N}.$
   \item [\textup{(\hspace{0.3pt}b\hspace{-1pt})}]
      If a set ${D}$ is $\hspace{1pt}\mathsurround=0pt\pi$-dense in $\mathcal{N},$
      then ${D}$ is dense in $\mathcal{N}.$\hfill$\Box$
   \end{itemize}
\end{rem}

\begin{teo}\label{main.theorem}
   Suppose that $\hspace{1pt}{Y}=\bigcap_{{n}\in\hspace{0.3pt}\omega}{U}_{n},$ where each $\hspace{1pt}{U}_{n}$ is an open $\hspace{1pt}\mathsurround=0pt\pi$-dense subset of the Baire space.
   Then there is a Baire foliage tree on $\hspace{1pt}{Y}\hspace{-1pt}$ that shoots into
   the standard foliage tree of $\hspace{1pt}{}^\omega\hspace{-1pt}\omega$ \textup{(see Definitions~\ref{def.B.f.tree}, \ref{def.stdrt.f.tr.of.B.sp}, \ref{def.sprt.through}, and \ref{not.pi.dense})}.
\end{teo}

\begin{que} Does Theorem~\ref{main.theorem} remains true if we replace ``$\mathsurround=0pt\pi$-dense'' by ``dense''\,?
\end{que}

We will build this Baire foliage tree on ${Y}\hspace{-1pt}$ which shoots into $\mathbf{S}$ by applying the foliage hybrid operation to $\mathbf{S}$ and $\varphi,$ where $\varphi$ is a consistent family of foliage grafts for $\mathbf{S}.$ We construct the family $\varphi$ in the proof of \hspace{1pt}Theorem~\ref{main.theorem}, see below. The construction of a single foliage graft $\mathbf{G}$ (that will be a member of  $\varphi$) is described in the following lemma:

\begin{lem}\label{Construction of an LPB-fine foliage graft}
   Suppose that $\mathbf{v}\in{}^{<\hspace{0.2pt}\omega}\hspace{-1pt}\omega$ and ${O}\subset\mathbf{S}_\mathbf{v}$
   is open in the Baire space and is $\hspace{1pt}\mathsurround=0pt\pi$-dense at $\mathbf{v}.$
   Then there is a foliage tree $\mathbf{G}$ such that

   \begin{itemize}
   \item [\textup{(a1)}\ ]
      $\mathsurround=0pt
      {0}_{\hspace{-0.2pt}\mathbf{G}}=\mathbf{v},$
   \item [\textup{(a2)}\ ]
      $\mathsurround=0pt
      \height\hspace{-0.4pt}\mathbf{G}\leqslant\omega,$
   \item [\textup{(a3)}\ ]
      $\mathsurround=0pt\mathbf{G}$ is
      $\mathsurround=0pt\aleph_0$-branching\textup{,}
   \item [\textup{(a4)}\ ]
      $\mathsurround=0pt\mathbf{G}$ has bounded chains\textup{,}
   \item [\textup{(a5)}\ ]
      $\mathsurround=0pt\mathbf{G}$ is
      locally strict\textup{,}
   \item [\textup{(a6)}\ ]
      $\mathsurround=0pt\mathbf{G}$ is
      open in the Baire space,
    \item [\textup{(a7)}\ ]
      $\mathsurround=0pt\mathbf{G}$ is a
      foliage graft for $\mathbf{S},$
    \item [\textup{(a8)}\ ]
      $\mathsurround=0pt\mathbf{G}$
      preserves shoots of $\mathbf{S},$
   \item [\textup{(a9)}\ ]
      $\mathsurround=0pt
      \impl\mathbf{G}\neq\varnothing,$
    \item [\textup{(a10)}\ ]
      $\mathsurround=0pt
      \cut(\mathbf{S},\mathbf{G})=\mathbf{S}_\mathbf{v}{\setminus}\,{O},$ and
   \item [\textup{(a11)}\ ]
      $\mathsurround=0pt
      {O}\equiv\bigsqcup_{{z}\in\maxel\mathbf{G}}\mathbf{S}_{z}.$
   \end{itemize}
\end{lem}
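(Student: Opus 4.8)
The plan is to reduce the whole construction to the choice of a skeleton together with the placement of its leaves, and then to obtain that skeleton by a recursion on the coordinates of $\mathbf{S}$ that is driven by the $\pi$-density of $O$ at $\mathbf{v}$. First I would fix the antichain $M$ of all $\subseteq$-minimal $z\in{}^{<\omega}\omega$ with $\mathbf{v}\subseteq z$ and $\mathbf{S}_z\subseteq O$. Since $O$ is open and $O\subset\mathbf{S}_\mathbf{v}$ properly, one gets $O\equiv\bigsqcup_{z\in M}\mathbf{S}_z$ and $\mathbf{v}\notin M$, so every $z\in M$ satisfies $z\supsetneq\mathbf{v}$, i.e. $M\subseteq(\mathbf{v}){\downspoon}_{\mathbf{S}}$, and $M$ is an antichain of $\mathbf{S}$. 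Because $O$ is $\pi$-dense at $\mathbf{v}$, already $\{\mathbf{v}^{\frown}\langle n\rangle:\mathbf{S}_{\mathbf{v}^{\frown}\langle n\rangle}\subseteq O\}\subseteq M$ is infinite, so $M$ is a countably infinite antichain. This $M$ will be taken to be $\maxel\mathbf{G}$, with $\mathbf{G}_z=\mathbf{S}_z=\mathbf{S}_z$ at each leaf, which is exactly what (a11) and clause (d) of Definition~\ref{def.fol.graft} require.

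Next I would observe that, once the skeleton is fixed with least node $\mathbf{v}$ and $\maxel\mathbf{G}=M$, local strictness forces the leaves: for every node $x$ set $\mathbf{G}_x:=\bigcup\{\mathbf{S}_z:z\in M,\ z\geqslant_{\mathbf{G}}x\}$. With this definition $\mathbf{G}_\mathbf{v}=O$, every leaf keeps its box $\mathbf{S}_z$, all leaves are open (so (a6) holds), $\mathbf{G}$ is nonincreasing, and — because the $\mathbf{S}_z$ ($z\in M$) are pairwise disjoint — $\mathbf{G}$ is locally strict, giving (a5); moreover clause (c) of Definition~\ref{def.fol.graft} reads $O\subseteq\mathbf{S}_\mathbf{v}$ and $\cut(\mathbf{S},\mathbf{G})=\mathbf{S}_\mathbf{v}\setminus O$, which is (a10). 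Thus (c),(d),(a5),(a6),(a10),(a11) and nonincreasingness become bookkeeping, and the conditions (a)--(f) of Definition~\ref{def.graft} that make $\skeleton\mathbf{G}$ a graft for $\mathbf{S}$ reduce to: $\mathbf{v}$ is the least node, $M=\maxel\mathbf{G}$ is an antichain of $\mathbf{S}$ lying in $(\mathbf{v}){\downspoon}_{\mathbf{S}}$, and $\impl\mathbf{G}$ consists of fresh symbols disjoint from $\nodes\mathbf{S}$; this yields (a7), and as soon as $\impl\mathbf{G}\neq\varnothing$ it yields (a9).

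Everything therefore comes down to building the skeleton as a tree on $\{\mathbf{v}\}\sqcup\impl\mathbf{G}\sqcup M$ (with $\impl\mathbf{G}$ fresh) that is $\aleph_0$-branching (a3), of height $\leqslant\omega$ (a2), has bounded chains (a4), and for which the foliage tree just described preserves shoots of $\mathbf{S}$ (a8). I would build it recursively, coordinate by coordinate. A node responsible for a box $\mathbf{S}_w\cap O$ (with $w\supseteq\mathbf{v}$, $\mathbf{S}_w\not\subseteq O$) is given, as sons, the leaves $\{w^{\frown}\langle n\rangle:\mathbf{S}_{w^{\frown}\langle n\rangle}\subseteq O\}$ — infinitely many of them, by $\pi$-density, which secures (a3) — together with one fresh implant son, responsible for $\mathbf{S}_{w^{\frown}\langle n\rangle}\cap O$, for each $n$ with $\varnothing\neq\mathbf{S}_{w^{\frown}\langle n\rangle}\cap O\subsetneq\mathbf{S}_{w^{\frown}\langle n\rangle}$. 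Such a node \emph{spreads} the coordinate $\length w$: its sons' descendant-leaves agree on that coordinate, and each value is taken by exactly one son. This is precisely what lets one verify $\shoot_\mathbf{G}(x)\gg\shoot_\mathbf{S}(w)$: given a nonempty $D=\mathbf{S}_w\setminus\bigcup_{n\in F}\mathbf{S}_{w^{\frown}\langle n\rangle}\in\shoot_\mathbf{S}(w)$ one discards the finitely many sons meeting $\bigcup_{n\in F}\mathbf{S}_{w^{\frown}\langle n\rangle}$ and the $\flesh_\mathbf{G}$ of the remaining cofinite family is a nonempty member of $\shoot_\mathbf{G}(x)$ contained in $D$. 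Since for $p\in O$ the relevant $y\in\scope_\mathbf{S}(p)\cap(\{\mathbf{v}\}\cup\expl(\mathbf{S},\mathbf{G}))$ are exactly the finitely many proper prefixes of the $M$-box of $p$ whose box is not inside $O$, each such $y$ is served by the node on $p$'s branch that is responsible for the box $\mathbf{S}_y\cap O$, giving (a8) through Definition~\ref{def.pres.shoots}.

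The main obstacle is reconciling this coordinate-following recursion with \emph{bounded chains} (a4). The recursion keeps an implant branch alive as long as the boxes meet $O$ without being contained in it, so a naive version produces one infinite descending branch for each point of $\overline O\setminus O$, and such a branch is an unbounded chain, contradicting (a4). The heart of the proof is thus to arrange the recursion so that every branch terminates at a node of $M$ while every branch to an $M$-leaf still carries, for each relevant coordinate, a node that spreads it; this is exactly the point at which the full strength of $\pi$-density of $O$ \emph{at every} $w\supseteq\mathbf{v}$ (not merely at $\mathbf{v}$) must be exploited, so that at each stage infinitely many coordinate-values are already resolved into leaves and only a controlled residue is passed downward. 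I expect this well-foundedness argument — showing that bounded chains can be achieved simultaneously with the coordinate-spreading needed for (a8) — to be the genuinely delicate step; once it and $\height\mathbf{G}\leqslant\omega$ are in place, (a1)--(a11) follow from the bookkeeping above.
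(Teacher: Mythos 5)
Your setup matches the paper's: you take $\maxel\mathbf{G}$ to be the set $M$ of $\subseteq$-minimal nodes $z\supseteq\mathbf{v}$ with $\mathbf{S}_z\subseteq O$, keep $\mathbf{G}_z=\mathbf{S}_z$ there, define the remaining leaves as unions of the boxes of $M$-descendants, and correctly reduce (a1), (a5)--(a7), (a9)--(a11) to bookkeeping about the skeleton. But the core of the lemma is exactly the part you leave open. The skeleton you actually describe --- one implant node per $w$ with $\varnothing\neq\mathbf{S}_w\cap O\subsetneq\mathbf{S}_w$, whose implant sons correspond to the $\Delta$-sons of $w$ --- is the naive construction, and as you yourself observe it has an infinite descending implant chain through every point of $\mathbf{S}_\mathbf{v}\setminus O$, so it violates (a4). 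Your closing suggestion (exploit $\pi$-density at every $w$ so that ``only a controlled residue is passed downward'') is not a construction; it is a restatement of the difficulty, and it is not how the tension is resolved. Since (a4) is indispensable downstream (bounded chains of the grafts are what make the foliage hybrid complete and give it strict branches in Proposition~\ref{prop.f.hybr}(d)), the proof is genuinely incomplete at its decisive step.

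For comparison, the paper resolves the conflict not by pruning the naive recursion but by flattening the implant structure into finite chains and redistributing the leaf-sons. For each $d\in\Delta$ (where $\Delta$ is the set of nodes $\supseteq\mathbf{v}$ whose box is not inside $O$) it introduces a \emph{finite} chain of fresh nodes $\mathbf{v}\sqsubset\mathsf{node}_d^{\,l(d)}\sqsubset\dots\sqsubset\mathsf{node}_d^{\,0}$, with $l(d)=\length d-\length\mathbf{v}$; simultaneously, each set $\Omega_y=\sons_{\mathbf{S}}(y)\cap\{z:\mathbf{S}_z\subseteq O\}$ (infinite by $\pi$-density) is partitioned into infinitely many infinite pieces $\Omega_{y,d}$ indexed by the $d\in\Delta$ extending $y$, and the piece $\Omega_{y,d}$ is attached as the set of maximal sons of $\mathsf{node}_d^{\,l}$ where $d_{-l}=y$. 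Thus every chain in the skeleton is finite (giving (a2) and (a4) outright), every implant node is $\aleph_0$-branching via its piece, and shoot preservation still holds: for $p\in O$ with $M$-box $\dot z$ and parent $w=\dot z_{-1}$, one picks $d$ with $\dot z\in\Omega_{w,d}$, and then for each prefix $y$ of $p$ the node $\mathsf{node}_d^{\,\length d-\length y}$ has $p$ in its leaf and has cofinitely many sons lying in $\Omega_{y,d}\subseteq\sons_{\mathbf{S}}(y)$ with full leaves $\mathbf{S}_z$, so Lemma~\ref{about shoots} applies. The essential idea you are missing is this distribution of the ``service'' of each node $y$ over infinitely many disjoint finite chains, one for each $d\supseteq y$ in $\Delta$, rather than over a single implant branch mirroring $\Delta$.
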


In the proof of Lemma~\ref{Construction of an LPB-fine foliage graft} (see below)  we verify clause (a8), which says that $\mathbf{G}$ preserves shoots of $\mathbf{S}.$  We do this by using the following lemma:

\begin{lem}\label{about shoots}
   Suppose that $\mathbf{A},\mathbf{B}$ are foliage trees with nonempty leaves, ${x}\in\nodes\mathbf{A},$ and
   ${y}\in\nodes\mathbf{B}.$
   Assume that $\big|\hspace{-1pt}\sons_{\hspace{0.2pt}\mathbf{A}\hspace{-1.2pt}}
   (\hspace{-0.7pt}{x}\hspace{-1pt})\hspace{0.7pt}\big|\geqslant\aleph_0$ and that there is finite ${F}$ such that
   $$\forall{s}\,{\in}\,\sons_{\hspace{0.2pt}\mathbf{A}\hspace{-1.2pt}}
   (\hspace{-0.7pt}{x}\hspace{-1pt})\,{\setminus}\,{F}
   \;\big[\,{s}\in\hspace{0.4pt}\sons_\mathbf{B}({y})\enskip\mathsf{and}\enskip
   \mathbf{A}_{\hspace{-0.3pt}{s}}\subseteq\mathbf{B}_{\hspace{0.5pt}{s}}\,\big].
   $$
   Then $\,\shoot_{\hspace{-0.2pt}\mathbf{A}\hspace{-0.8pt}}(\hspace{-0.7pt}{x}\hspace{-1pt})\gg\shoot_\mathbf{B}({y}).$
   \hfill$\Box$
\end{lem}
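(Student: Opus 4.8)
The plan is to unwind the definition of $\gg$ and, for each nonempty member of $\shoot_\mathbf{B}({y}),$ construct a witnessing member of $\shoot_\mathbf{A}(\hspace{-0.7pt}{x}\hspace{-1pt})$ contained in it. Recall that a typical element of $\shoot_\mathbf{A}(\hspace{-0.7pt}{x}\hspace{-1pt})$ (resp.\ $\shoot_\mathbf{B}({y})$) is $\flesh_\mathbf{A}({C})$ (resp.\ $\flesh_\mathbf{B}({C})$) for ${C}$ a cofinite subset of $\sons_\mathbf{A}(\hspace{-0.7pt}{x}\hspace{-1pt})$ (resp.\ of $\sons_\mathbf{B}({y})$), and that $\shoot_\mathbf{A}(\hspace{-0.7pt}{x}\hspace{-1pt})\gg\shoot_\mathbf{B}({y})$ means exactly that every ${D}\in\shoot_\mathbf{B}({y})\setminus\{\varnothing\}$ contains some nonempty ${G}\in\shoot_\mathbf{A}(\hspace{-0.7pt}{x}\hspace{-1pt}).$

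So I would fix an arbitrary ${D}\in\shoot_\mathbf{B}({y})\setminus\{\varnothing\}$ and write ${D}=\flesh_\mathbf{B}({C}_\mathbf{B})$ with ${C}_\mathbf{B}$ cofinite in $\sons_\mathbf{B}({y});$ set ${F}_\mathbf{B}\coloneq\sons_\mathbf{B}({y})\setminus{C}_\mathbf{B},$ a finite set. The natural candidate is ${G}\coloneq\flesh_\mathbf{A}({C}_\mathbf{A}),$ where ${C}_\mathbf{A}\coloneq\sons_\mathbf{A}(\hspace{-0.7pt}{x}\hspace{-1pt})\setminus({F}\cup{F}_\mathbf{B}).$ Since ${F}$ and ${F}_\mathbf{B}$ are finite, ${C}_\mathbf{A}$ is cofinite in $\sons_\mathbf{A}(\hspace{-0.7pt}{x}\hspace{-1pt}),$ so indeed ${G}\in\shoot_\mathbf{A}(\hspace{-0.7pt}{x}\hspace{-1pt}).$

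To check ${G}\subseteq{D},$ I would take any ${s}\in{C}_\mathbf{A}.$ Then ${s}\in\sons_\mathbf{A}(\hspace{-0.7pt}{x}\hspace{-1pt})\setminus{F},$ so the hypothesis yields ${s}\in\sons_\mathbf{B}({y})$ and $\mathbf{A}_{{s}}\subseteq\mathbf{B}_{{s}};$ and since ${s}\notin{F}_\mathbf{B},$ we get ${s}\in\sons_\mathbf{B}({y})\setminus{F}_\mathbf{B}={C}_\mathbf{B},$ whence $\mathbf{B}_{{s}}\subseteq\flesh_\mathbf{B}({C}_\mathbf{B})={D}.$ Thus $\mathbf{A}_{{s}}\subseteq{D}$ for every ${s}\in{C}_\mathbf{A},$ and therefore ${G}=\bigcup\{\mathbf{A}_{{s}}:{s}\in{C}_\mathbf{A}\}\subseteq{D}.$

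The only point needing care is nonemptiness of ${G},$ which matters because $\gg$ quantifies over nonempty witnesses; this is precisely where the two standing assumptions enter. Since $|\sons_\mathbf{A}(\hspace{-0.7pt}{x}\hspace{-1pt})|\geqslant\aleph_0$ and ${F}\cup{F}_\mathbf{B}$ is finite, the set ${C}_\mathbf{A}$ is infinite, hence nonempty; picking any ${s}\in{C}_\mathbf{A}$ and using that $\mathbf{A}$ has nonempty leaves gives $\varnothing\neq\mathbf{A}_{{s}}\subseteq{G}.$ As ${D}$ was arbitrary, this establishes $\shoot_\mathbf{A}(\hspace{-0.7pt}{x}\hspace{-1pt})\gg\shoot_\mathbf{B}({y}).$ I do not expect a genuine obstacle: the argument is pure bookkeeping, and the two hypotheses that look incidental are exactly what is needed — the finiteness of ${F}$ together with that of ${F}_\mathbf{B}$ lets a single finite exceptional set absorb the ``defect'' of ${C}_\mathbf{B},$ and the infinitude of $\sons_\mathbf{A}(\hspace{-0.7pt}{x}\hspace{-1pt})$ guarantees the surviving witness is nonempty.
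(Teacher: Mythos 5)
Your proof is correct. The paper states Lemma~\ref{about shoots} without proof (the terminal $\Box$ marks it as routine), and your argument --- passing to the cofinite set ${C}_\mathbf{A}\coloneq\sons_{\hspace{0.2pt}\mathbf{A}\hspace{-1.2pt}}(\hspace{-0.7pt}{x}\hspace{-1pt})\setminus({F}\cup{F}_\mathbf{B})$ and using the infinitude of $\sons_{\hspace{0.2pt}\mathbf{A}\hspace{-1.2pt}}(\hspace{-0.7pt}{x}\hspace{-1pt})$ together with nonempty leaves to get a nonempty witness --- is exactly the intended unwinding of the definitions of $\shoot$ and $\gg$.
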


\begin{proof}[\textbf{\textup{Proof of Lemma~\ref{Construction of an LPB-fine foliage graft}}}]
Let \vspace{-1.5ex}
$${\Omega}\coloneq\{{z}\in\mathbf{v}\hspace{-0.5pt}{\downfilledspoon}_\mathbf{S}:
\mathbf{S}_{z}\subseteq{O}\},\quad
{\Delta}\coloneq\mathbf{v}\hspace{-0.5pt}{\downfilledspoon}_\mathbf{S}\hspace{-1pt}\setminus{\Omega},
\quad\text{and}\quad
\mathsf{MAX}\coloneq\minel\,({\Omega},<_\mathbf{S}).\vspace{-1ex}$$
Then we have

   \begin{itemize}
   \item [\textup{(b1)}\ ]
      $\mathsurround=0pt
      \mathbf{v}\in{\Delta}\quad\mathsf{and}\quad|{\Delta}|=\aleph_0;$
   \item [\textup{(b2)}\ ]
      $\mathsurround=0pt
      \Delta=(\Delta{\upfootline}_{\hspace{-1pt}\mathbf{S}})
      \cap(\mathbf{v}\hspace{-0.5pt}{\downfilledspoon}_\mathbf{S});$
   \item [\textup{(b3)}\ ]
      $\mathsurround=0pt
      \mathsf{MAX}\,$ is an antichain in $\mathbf{S};$
   \item [\textup{(b4)}\ ]
      $\mathsurround=0pt
      \mathsf{MAX}{\downfootline}_\mathbf{S}={\Omega};$
   \item [\textup{(b5)}\ ]
       $\mathsurround=0pt{O}\,\equiv\,\bigsqcup_{{z}\in\hspace{0.2pt}\mathsf{MAX}}\mathbf{S}_{z}.$
   \end{itemize}

For each ${x}\in{\Delta},$ define\vspace{-1.5ex}
$${\Delta}_{x}\coloneq\,{\Delta}\cap({x}\hspace{-0.7pt}{\downfilledspoon}_\mathbf{S})
\quad\text{and}\quad
{\Omega}_{{x}}\coloneq\,\sons_{\hspace{0.5pt}\mathbf{S}\hspace{-0.5pt}}(\hspace{-0.7pt}{x}\hspace{-1pt})\cap{\Omega}.\vspace{-2ex}$$
Then

   \begin{itemize}
   \item [\textup{(c1)}\ ]
      $\mathsurround=0pt\hspace{-0.7pt}\forall{x}\in{\Delta}
      \:[\, {x}\in{\Delta}_{x}\enskip\mathsf{and}\enskip
      |{\Delta}_{x}|=\aleph_0\,];$
   \item [\textup{(c2)}\ ]
      $\mathsurround=0pt\hspace{-0.7pt}\forall{x}\in{\Delta}
      \:[\,{\Omega}_{{x}}\subseteq\,\mathsf{MAX}
      \enskip\mathsf{and}\enskip|\,{\Omega}_{{x}}|=\aleph_0\,];$
   \item [\textup{(c3)}\ ]
      $\mathsurround=0pt\mathsf{MAX}\,\equiv\hspace{1pt}
      \bigsqcup_{{x}\in{\Delta}}{\Omega}_{{x}}.$
   \end{itemize}

Now for each ${x}\in{\Delta}$ and all ${d}\in{\Delta}_{x},$ we can find infinite sets ${\Omega}_{{x},{d}}\subseteq{\Omega}_{{x}}$
in such a way that
\begin{equation}\label{*17.2}
\textstyle\forall{x}\,{\in}\,{\Delta}\;\big[\:{\Omega}_{{x}}\equiv\,
\bigsqcup_{{d}\in{\Delta}_{x}}\!{\Omega}_{{x},{d}}\,\big].\vspace{-2ex}
\end{equation}
Put
$$\textsf{IMP}\coloneq\Big\{\,{\mathsf{node}}_{\hspace{0.1pt}{x}}^{\hspace{0.3pt}{l}}\,:\,{x}\in{\Delta}
\ \mathsf{and}\ {l}\,{\in}\,\big\{0,\ldots,{l}(\hspace{-0.7pt}{x}\hspace{-1pt})\big\}\Big\}
$$
where\vspace{-1ex}
$${l}(\hspace{-0.7pt}{x}\hspace{-1pt})\coloneq\length{x}-\length\mathbf{v}$$
and ${\mathsf{node}}_{\hspace{0.1pt}{x}}^{\hspace{0.3pt}{l}}$ are different new nodes for the skeleton of the foliage tree $\mathsurround=0pt\mathbf{G}$ such that $\mathsf{IMP}\,\cap\,\nodes\mathbf{S}=\varnothing.$
Put
$$\mathsf{NOD}\coloneq\{\hspace{-0.5pt}\mathbf{v}\hspace{-0.7pt}\}\cup\hspace{0.4pt}\mathsf{MAX}\cup\mathsf{IMP}$$
(we intend to have $\nodes\mathbf{G}=\mathsf{NOD},$ ${0}_{\hspace{-0.2pt}\mathbf{G}}=\mathbf{v},$ $\maxel\mathbf{G}=\mathsf{MAX},$ and
$\impl\mathbf{G}=\mathsf{IMP}$).

For ${x}\in{}^{<\hspace{0.2pt}\omega}\hspace{-1pt}\omega$ and ${l}\in\{0,\ldots,\length{x}\},$ define
$${x}_{-\,{l}}\;\coloneq\;{x}\:{\upharpoonright}\;\big(\!(\length{x})\,{-}\,{l}\big)$$
--- that is, ${x}_{-\,{l}}=\langle{x}_0,\ldots,{x}_{(\length{x})-\,{l}-1}\rangle\in\omega^{(\length{x})-\,{l}},$
${x}_{{-}\,0}={x},$ and if ${x}\in\mathbf{v}\hspace{-0.5pt}{\downfilledspoon}_\mathbf{S},$ then ${x}_{-\,{l}(\hspace{-0.7pt}{x}\hspace{-1pt})}=\mathbf{v}.$
Using (b2) we have

   \begin{itemize}
   \item [\textup{(d1)}\ ]
      $\mathsurround=0pt
      \forall{x}\,{\in}\,{\Delta}\,\forall{l}\,{\in}\,\big\{0,\ldots,{l}(\hspace{-0.7pt}{x}\hspace{-1pt})\big\}
      \,\big[\,{x}_{-\,{l}}\in{\Delta}\enskip\mathsf{and}\enskip{x}\in{\Delta}_{{x}_{{-}{l}}}\,\big];$
   \item [\textup{(d2)}\ ]
      $\mathsurround=0pt
      \Big\{({x}_{-\,{l}},{x}):{x}\,{\in}\,\Delta\ \mathsf{and}\ {l}\,{\in}\,\big\{0,\ldots,{l}(\hspace{-0.7pt}{x}\hspace{-1pt})\big\}\Big\}=
      \big\{({z},{d}):{z}\,{\in}\,\Delta\ \mathsf{and}\ {d}\,{\in}\,\Delta_{z}\big\}.$
   \end{itemize}

Now we build a tree $(\mathsf{NOD},<),$ which will be a skeleton for the foliage tree $\mathbf{G}.$
First we define a relation $\lessdot$ on the set $\mathsf{NOD}$ as the relation that satisfies exactly the following:

   \begin{itemize}
   \item[\ding{226}\ ]
      for each ${x}\in{\Delta},$

      $\mathsurround=0pt\mathbf{v}\,\lessdot\,
      {\mathsf{node}}_{\hspace{0.1pt}{x}}^{\hspace{0.3pt}{l}(\hspace{-0.7pt}{x}\hspace{-1pt})}\,\lessdot\,
      {\mathsf{node}}_{\hspace{0.1pt}{x}}^{\hspace{0.3pt}{l}(\hspace{-0.7pt}{x}\hspace{-1pt}){-}1}
      \,\lessdot\,\ldots
      \,\lessdot\,{\mathsf{node}}_{\hspace{0.1pt}{x}}^{{1}}\,\lessdot\,{\mathsf{node}}_{\hspace{0.1pt}{x}}^{{0}};$
   \item[\ding{226}\ ]
      for each ${x}\in{\Delta}$ and each ${l}\in\big\{0,\ldots,{l}(\hspace{-0.7pt}{x}\hspace{-1pt})\big\},$

      $\mathsurround=0pt{\mathsf{node}}_{\hspace{0.1pt}{x}}^{\hspace{0.3pt}{l}}\lessdot\,{z}\quad$ for all ${z}\in{\Omega}_{{x}_{-{l}},\,{x}}\,.$
   \end{itemize}
Note that the last clause is correct by (d1).
Then let relation $<$ be the transitive closure of relation~$\lessdot.$
That is, for each ${a},{b}\in\mathsf{NOD},$
$${a}<{b}      \quad{\colon}{\longleftrightarrow}\quad
\exists\,{n}\,{\in}\,\omega\ \exists\,{z}_{0},\ldots,{z}_{{n}{+}{1}}\,{\in}\:\mathsf{NOD}
\ [\,{a}={z}_{0}\lessdot\,{z}_{1}\lessdot\,\ldots\,\lessdot\,{z}_{{n}{+}{1}}={b}\,].$$

Let $\mathcal{T}\coloneq(\mathsf{NOD},<).$ Then it is not hard to show the following:

   \begin{itemize}
    \item [\textup{(e1)}\ ]
      $\mathsurround=0pt\sons_{\hspace{0.3pt}\mathcal{T}}({\mathsf{node}}_{\hspace{0.1pt}{x}}^{{0}})=
      {\Omega}_{{x}_{-{0}},\,{x}}\quad$
      for all ${x}\in{\Delta};$

      $\mathsurround=0pt\sons_{\hspace{0.3pt}\mathcal{T}}({\mathsf{node}}_{\hspace{0.1pt}{x}}^{\hspace{0.3pt}{l}})=
      {\Omega}_{{x}_{-{l}},\,{x}}\cup\big\{{\mathsf{node}}_{\hspace{0.1pt}{x}}^{\hspace{0.3pt}{l}{-}1}\big\}\quad$
      for all ${x}\in{\Delta}$ and ${l}\in\big\{1,\ldots,{l}(\hspace{-0.7pt}{x}\hspace{-1pt})\big\};$

      $\mathsurround=0pt\sons_{\hspace{0.3pt}\mathcal{T}}(\hspace{-0.5pt}\mathbf{v}\hspace{-0.7pt})\hspace{1pt}=
      \big\{{\mathsf{node}}_{\hspace{0.1pt}{x}}^{\hspace{0.3pt}{l}(\hspace{-0.7pt}{x}\hspace{-1pt})}:
      {x}\in{\Delta}\big\}.$
   \item [\textup{(e2)}\ ]
      $\mathsurround=0pt\hspace{-0.7pt}\forall{x}\in{\Delta}
      \;[\,\mathbf{v}\,\sqsubset_\mathcal{T}\,{\mathsf{node}}_{\hspace{0.1pt}{x}}^{\hspace{0.3pt}{l}
      (\hspace{-0.7pt}{x}\hspace{-1pt})}
      \,\sqsubset_\mathcal{T}\,{\mathsf{node}}_{\hspace{0.1pt}{x}}^{\hspace{0.3pt}{l}
      (\hspace{-0.7pt}{x}\hspace{-1pt}){-}1}
      \,\sqsubset_\mathcal{T}\,\ldots\,\sqsubset_\mathcal{T}\,
      {\mathsf{node}}_{\hspace{0.1pt}{x}}^{1}
      \,\sqsubset_\mathcal{T}\,{\mathsf{node}}_{\hspace{0.1pt}{x}}^{{0}}\,];$

      in particular, $\ \mathbf{v}\,\sqsubset_\mathcal{T}\,{\mathsf{node}}_{\hspace{0.5pt}\mathbf{v}}^{{l
      (\hspace{-0.5pt}\mathbf{v}\hspace{-0.7pt})}}
      ={\mathsf{node}}_{\hspace{0.5pt}\mathbf{v}}^{{0}}\,.$
   \item [\textup{(e3)}\ ]
      $\mathsurround=0pt\maxel\mathcal{T}=
      \mathsf{MAX}.$

      Indeed, using (d2), (c3), and (\ref{*17.2}), we get
      $$\maxel\mathcal{T}\,=\,\bigcup\Big\{\,{\Omega}_{{x}_{-{l}},\,{x}}:\,
      {x}\in{\Delta}\ \mathsf{and}\
      {l}\in\big\{0,\ldots,{l}(\hspace{-0.7pt}{x}\hspace{-1pt})\big\}\Big\}\hspace{1pt}=\,
      \bigcup\big\{\,{\Omega}_{{z},{d}}\,:\,
      {z}\in{\Delta}\ \mathsf{and}\
      {d}\in\Delta_{z}\big\}
      \,=\,\mathsf{MAX}.$$
   \item [\textup{(e4)}\ ]
      $\mathsurround=0pt\mathcal{T}\,$ is an $\mathsurround=0pt\aleph_0$-branching tree with the least node and  ${0}_\mathcal{T}=\mathbf{v}.$
   \item [\textup{(e5)}\ ]
      $\mathsurround=0pt\mathcal{T}\,$ has bounded chains and $\height\hspace{-0.4pt}\mathcal{T}\leqslant\omega.$

      To prove (e5) it is enough to show that each chain in $\mathcal{T}\hspace{-1pt}$ is finite.
      If ${C}$ is a chain in $\mathcal{T},$ then by (c) of Lemma~\ref{about trees}, there is ${B}\in\branches\mathcal{T}$ such that ${C}\subseteq{B},$ and it follows using (e) of
      Lemma~\ref{about trees} that there exists some ${s}$ in ${B}\cap\hspace{0.4pt}\sons_{\hspace{0.3pt}\mathcal{T}}({0}_\mathcal{T}).$ Then
      ${s}={\mathsf{node}}_{\hspace{0.1pt}{x}}^{\hspace{0.3pt}{l}(\hspace{-0.7pt}{x}\hspace{-1pt})}$ for some ${x}\in{\Delta},$ so $|{B}\hspace{0.5pt}|\leqslant{l}(\hspace{-0.7pt}{x}\hspace{-1pt})+3.$
   \item [\textup{(e6)}\ ]
      $\mathsurround=0pt\mathcal{T}\,$ is a graft for $\mathbf{S}$ and
      $\impl\mathcal{T}=\mathsf{IMP}.$
   \item [\textup{(e7)}\ ]
      $\mathsurround=0pt\expl(\mathbf{S},\mathcal{T})=
      \Delta\,{\setminus}\,\{\hspace{-0.5pt}\mathbf{v}\hspace{-0.7pt}\}.$

      Indeed, using (b4), we have\vspace{-0.7ex}
      $$\expl(\mathbf{S},\mathcal{T})
      \,\coloneq\,
      (\hspace{-1pt}{0}_\mathcal{T}\hspace{-1.4pt})\hspace{-0.8pt}{\downspoon}_{\mathbf{S}}\!\setminus(\hspace{-0.4pt}\maxel\mathcal{T}) {\downfootline}_{\mathbf{S}}\,=\,\mathbf{v}\hspace{-0.5pt}{\downspoon}_{\mathbf{S}}\setminus
      \mathsf{MAX}{\downfootline}_{\mathbf{S}}\,=
      $$
      $$\big(\mathbf{v}\hspace{-0.5pt}{\downfilledspoon}_\mathbf{S}\hspace{-1pt}\setminus
      \{\hspace{-0.5pt}\mathbf{v}\hspace{-0.7pt}\}\big)
      \setminus{\Omega}\,=\,
      \big(\mathbf{v}\hspace{-0.5pt}{\downfilledspoon}_\mathbf{S}\hspace{-1pt}\setminus{\Omega}\big)
      \setminus\{\hspace{-0.5pt}\mathbf{v}\hspace{-0.7pt}\}\,=\,
      \Delta\,{\setminus}\,\{\hspace{-0.5pt}\mathbf{v}\hspace{-0.7pt}\}.$$
   \end{itemize}

Now we build a foliage tree $\mathbf{G}$ with $\skeleton\mathbf{G}=\mathcal{T}$ as follows:

   \begin{itemize}
   \item[\ding{226}\ ]
      $\mathsurround=0pt
      \mathbf{G}_{\hspace{-0.3pt}{z}}\coloneq\mathbf{S}_{z}\quad$
      for all ${z}\in\mathsf{MAX};$
   \item[\ding{226}\ ]
      $\mathsurround=0pt\displaystyle
      \mathbf{G}_{\hspace{-0.3pt}{\mathsf{node}}_{\hspace{0.1pt}{x}}^{{0}}}\:\coloneq\ \bigcup\big\{\mathbf{S}_{z}:{z}\in{\Omega}_{{x}_{-{0}},\,{x}}\big\}\quad$
      for all ${x}\in{\Delta};$
   \item[\ding{226}\ ]
      $\mathsurround=0pt\displaystyle
      \mathbf{G}_{\hspace{-0.3pt}{\mathsf{node}}_{\hspace{0.1pt}{x}}^{\hspace{0.3pt}{l}}}\:\coloneq\ \mathbf{G}_{\hspace{-0.3pt}{\mathsf{node}}_{\hspace{0.1pt}{x}}^{\hspace{0.3pt}{l}-1}}\cup\:
      \bigcup\big\{\mathbf{S}_{z}:{z}\in{\Omega}_{{x}_{-{l}},\,{x}}\big\}\quad$
      for all ${x}\in{\Delta}$ and
      ${l}\in\big\{1,\ldots,{l}(\hspace{-0.7pt}{x}\hspace{-1pt})\big\}  $
      (by recursion on $\hspace{1pt}\mathsurround=0pt{l}$);
   \item[\ding{226}\ ]
      $\mathsurround=0pt\displaystyle
      \mathbf{G}_\mathbf{v}\:\coloneq\ \bigcup\big\{
      \mathbf{G}_{\hspace{-0.3pt}{\mathsf{node}}_{\hspace{0.1pt}{x}}^{\hspace{0.3pt}{l}(\hspace{-0.7pt}{x}\hspace{-1pt})}}:
      {{x}\in{\Delta}}\big\}.$
   \end{itemize}
Then (e1), (c3), (\ref{*17.2}), and disjointness of the union from (b5) imply that $\mathbf{G}$ is locally strict.
Also it is not hard to show that $\mathbf{G}$ is nonincreasing,
${0}_{\hspace{-0.2pt}\mathbf{G}}=\mathbf{v},$
$\height\hspace{-0.4pt}\mathbf{G}\leqslant\omega,$
$\mathbf{G}$ is $\mathsurround=0pt\aleph_0$-branching,
$\mathbf{G}$ has bounded chains,
$\mathbf{G}$ is open in the Baire space,
$\mathbf{G}$ is a foliage graft for $\mathbf{S},$
$\impl\mathbf{G}\neq\varnothing,$
and ${O}\equiv\bigsqcup_{{z}\in\maxel\mathbf{G}}\mathbf{S}_{z}.$
To prove that $\cut(\mathbf{S},\mathbf{G})=\mathbf{S}_\mathbf{v}{\setminus}\,{O}$ we must show that
$\mathbf{G}_{\hspace{-0.3pt}{0}_{\hspace{-0.6pt}\mathbf{G}}}={O}.$
Since $\mathbf{G}$ is nonincreasing,
we have $\mathbf{G}_{\hspace{-0.3pt}{0}_{\hspace{-0.6pt}\mathbf{G}}}=\flesh\hspace{-1pt}\mathbf{G},$
so using (\hspace{0.3pt}b\hspace{-0.5pt}) of Lemma~\ref{about foliage trees}, (h) of Lemma~\ref{about trees}, and  (b5) we have\vspace{-0.5ex}
$$\textstyle\mathbf{G}_{\hspace{-0.3pt}{0}_{\hspace{-0.6pt}\mathbf{G}}}=\,\flesh\hspace{-1pt}\mathbf{G}\,=\,
\yield\mathbf{G}\,=\,\bigcup\!\big\{\hspace{-0.7pt}\fruit_\mathbf{G}(\hspace{-1.1pt}{B}\hspace{-0.6pt})
:{B}\in\branches\hspace{-0.5pt}\mathbf{G}\big\}\,=\vspace{-0.5ex}$$
$$\textstyle\bigcup\!\big\{\hspace{-0.7pt}\fruit_{\mathbf{G}}\hspace{0.1pt}\!
(\hspace{-0.7pt}{z}\hspace{-0.2pt}{\upfilledspoon}_{\hspace{-1.5pt}\mathbf{G}}\hspace{-0.7pt})
\hspace{0.5pt}:\hspace{0.5pt}{z}\in\maxel\mathbf{G}\hspace{0.5pt}\big\}=\,\bigcup\{\mathbf{G}_{\hspace{-0.3pt}{z}}:{z}\in
\maxel\mathbf{G}\}=\,\bigcup\{\mathbf{S}_{z}:{z}\in\mathsf{MAX}\}\,=\,{O}.\vspace{1ex}$$

It remains to prove that $\mathbf{G}$ preserves shoots of $\mathbf{S}.$ Suppose
$${p}\in\flesh\hspace{-1pt}\mathbf{G}=\mathbf{G}_{\hspace{-0.3pt}{0}_{\hspace{-0.6pt}\mathbf{G}}}\!=
{O},\quad {y}\in\{{0}_{\hspace{-0.2pt}\mathbf{G}}\}\cup\hspace{0.4pt}\expl(\mathbf{S},\mathbf{G})=
\Delta,\quad\mathsf{and}\quad\mathbf{S}_{{y}}\ni{p}.$$
We must find
${x}\in\{\hspace{-0.5pt}\mathbf{v}\hspace{-0.7pt}\}\cup\hspace{0.4pt}\hspace{0.7pt}\mathsf{IMP}$ such that
$\mathbf{G}_{\hspace{-0.3pt}{x}}\ni{p}$ and
$\shoot_{\hspace{-0.2pt}\mathbf{G}\hspace{-0.8pt}}(\hspace{-0.7pt}{x}\hspace{-1pt})
\gg\shoot_\mathbf{S}({y}).$
Note that $\mathbf{G}$ has nonempty leaves and
$|\hspace{-1pt}\sons_{\hspace{0.2pt}\mathbf{G}\hspace{-1.2pt}}
(\hspace{-0.7pt}{x}\hspace{-1pt})\hspace{0.7pt}|
\geqslant\aleph_{0}$ for all ${x}\in\{\hspace{-0.5pt}\mathbf{v}\hspace{-0.7pt}\}\cup\hspace{0.4pt}\hspace{0.7pt}\mathsf{IMP}$
since $\mathbf{G}$ is $\mathsurround=0pt\aleph_0$-branching and
$\{\hspace{-0.5pt}\mathbf{v}\hspace{-0.7pt}\}\cup\hspace{0.4pt}\hspace{0.7pt}\mathsf{IMP}\subseteq
\nodes\mathbf{G}\hspace{1pt}{\setminus}\hspace{0.7pt}\maxel\mathbf{G}.$
Lemma~\ref{about shoots} says that if there is finite ${F}$ such that
$$
\forall{s}\,{\in}\,\sons_{\hspace{0.2pt}\mathbf{G}\hspace{-1.2pt}}(\hspace{-0.7pt}{x}\hspace{-1pt})\,{\setminus}\,{F}
\;\big[\,{s}\in\hspace{0.4pt}\sons_{\hspace{0.5pt}\mathbf{S}\hspace{-0.5pt}}({y})
\enskip\mathsf{and}\enskip\mathbf{G}_{\hspace{-0.3pt}{s}}
\subseteq\mathbf{S}_{s}\,\big],
$$
then $\shoot_{\hspace{-0.2pt}\mathbf{G}\hspace{-0.8pt}}(\hspace{-0.7pt}{x}\hspace{-1pt})\gg\shoot_\mathbf{S}({y}).$
If ${x}\in\mathsf{IMP},$ then by (e1), (c3), and (\ref{*17.2}) there is finite ${L}$ such that
$\sons_{\hspace{0.2pt}\mathbf{G}\hspace{-1.2pt}}(\hspace{-0.7pt}{x}\hspace{-1pt})\setminus{L}\subseteq\mathsf{MAX},$
so for all
${s}\in\hspace{0.4pt}\sons_{\hspace{0.2pt}\mathbf{G}\hspace{-1.2pt}}(\hspace{-0.7pt}{x}\hspace{-1pt})\setminus{L}$
we have $\mathbf{G}_{\hspace{-0.3pt}{s}}=\mathbf{S}_{s}.$

Summarizing the above reasoning we come to the following.
Suppose ${y}\in\Delta$ and ${p}\in{O}\cap\mathbf{S}_{{y}}.$
Then to finish the proof it is enough to find ${x}\in\mathsf{IMP}$ and finite ${F}$ such that
\begin{equation}\label{*161.4}
\mathbf{G}_{\hspace{-0.3pt}{x}}\ni{p}\quad\mathsf{and}\quad
\sons_{\hspace{0.2pt}\mathbf{G}\hspace{-1.2pt}}(\hspace{-0.7pt}{x}\hspace{-1pt})\,{\setminus}\,{F}
\subseteq\hspace{0.4pt}\sons_{\hspace{0.5pt}\mathbf{S}\hspace{-0.5pt}}({y})\hspace{0.7pt}.
\end{equation}

Since ${p}\in{O},$ then by (b5) there is ${\dot{z}}\in\mathsf{MAX}$
such that ${p}\in\mathbf{S}_{\dot{z}}.$
Then $\mathbf{S}_{\dot{z}}\cap\mathbf{S}_{y}\neq\varnothing,$ so
either ${y}\geqslant_\mathbf{S}\dot{z}$ or ${y}<_\mathbf{S}\dot{z}$
since $\mathbf{S}$ is splittable.
If ${y}\geqslant_\mathbf{S}\dot{z},$ then by (b4) ${y}\in\Omega,$
which contradicts ${y}\in\Delta,$ so ${y}<_\mathbf{S}\dot{z}.$
Let ${w}\coloneq\dot{z}_{-1}.$ Then we have
$$\mathbf{v}\,\leqslant_\mathbf{S}\,{y}\,\leqslant_\mathbf{S}\,{w}
\,\sqsubset_\mathbf{S}\,\dot{z}\,\in\,{\mathsf{MAX}}\,=\,
\minel\,({\Omega},<_\mathbf{S})\subseteq\,\Omega,
$$
which implies ${w}\in\Delta$ and $\dot{z}\in{\Omega}_{w}.$
Then it follows by~(\ref{*17.2}) that
there is ${d}\in{\Delta}_{w}$ such that
$\dot{z}\in{\Omega}_{{w},{d}}.$
Now we have
$$\mathbf{v}\,\leqslant_\mathbf{S}\,{y}\,\leqslant_\mathbf{S}\,{w}
\,\leqslant_\mathbf{S}{d}\,\in\,\Delta,\quad
\dot{z}\,\in\,{\Omega}_{{w},{d}}\hspace{1pt},\quad\mathsf{and}\quad
{p}\,\in\,\mathbf{S}_{\dot{z}}.
$$
Let $l\coloneq\length{d}-\length{y}$ and $m\coloneq\length{d}-\length{w}.$
Then ${d}_{-\hspace{0.5pt}{l}}={y},$ ${d}_{-\hspace{0.5pt}{m}}={w},$
and $0\leqslant{m}\leqslant{l}\leqslant{l}(\hspace{-1pt}{d}\hspace{0.1pt}),$
so we may consider nodes ${\mathsf{node}}_{\hspace{0.3pt}{{d}}}^{\hspace{0.3pt}{l}}$
and ${\mathsf{node}}_{\hspace{0.3pt}{{d}}}^{\hspace{0.2pt}{m}}$ in $\mathsf{IMP}.$
Then ${x}\coloneq{\mathsf{node}}_{\hspace{0.3pt}{{d}}}^{\hspace{0.3pt}{l}}$
satisfies condition~(\ref{*161.4}). Indeed,
${\mathsf{node}}_{\hspace{0.3pt}{{d}}}^{\hspace{0.3pt}{l}}\leqslant_\mathbf{G}
{\mathsf{node}}_{\hspace{0.3pt}{{d}}}^{\hspace{0.2pt}{m}}$ by (e2)
and $\mathbf{G}$ is nonincreasing, so
$$\textstyle\mathbf{G}_{\hspace{-0.3pt}{x}}\,=\,\mathbf{G}_{\hspace{-0.3pt}{\mathsf{node}}_{\hspace{0.3pt}{{d}}}^{\hspace{0.3pt}{l}}}
\,\supseteq\,\mathbf{G}_{\hspace{-0.3pt}{\mathsf{node}}_{\hspace{0.3pt}{{d}}}^{\hspace{0.2pt}{m}}}
\,\supseteq\:\bigcup\{\mathbf{S}_{z}:{z}\in{\Omega}_{{{d}}_{-{m}},{d}}\}\,=\:
\bigcup\{\mathbf{S}_{z}:{z}\in{\Omega}_{{w},{d}}\}
\,\supseteq\,\mathbf{S}_{\dot{z}}\,\ni\,{p}.$$
Finally, by (e1) there is finite ${F}$ such that
$$\sons_{\hspace{0.2pt}\mathbf{G}\hspace{-1.2pt}}(\hspace{-0.7pt}{x}\hspace{-1pt})
\,{\setminus}\,{F}\,=\:\hspace{0.2pt}
\sons_{\hspace{0.2pt}\mathbf{G}\hspace{-1.2pt}}
({\mathsf{node}}_{\hspace{0.3pt}{{d}}}^{\hspace{0.3pt}{l}})
\,{\setminus}\,{F}\:=\:{\Omega}_{{d}_{-{l}},{d}}\,=\:{\Omega}_{{y},{d}}\:\subseteq\:
{\Omega}_{{y}}\:\subseteq\:\hspace{0.4pt}
\sons_{\hspace{0.5pt}\mathbf{S}\hspace{-0.5pt}}({y}).\vspace{-2ex}$$
\end{proof}

\begin{proof}[\textbf{\textup{Proof of \hspace{1pt}Theorem~\ref{main.theorem}}}]
Let ${\mathbf{v}}\in{}^{<\hspace{0.2pt}\omega}\hspace{-1pt}\omega$ and ${n}\in\omega.$
Put ${O}\coloneq{U}_{n}\cap\hspace{0.5pt}\mathbf{S}_{\mathbf{v}}$
and assume that ${O}\neq\mathbf{S}_{\mathbf{v}}.$
Then there is a foliage tree $\mathbf{G}$ that satisfies conditions (a1)--(a11) of Lemma~\ref{Construction of an LPB-fine foliage graft}. Let us denote this foliage tree $\mathbf{G}$ by $\mathbf{G}({\mathbf{v}},{n}).$
Using this notation, we construct sequences $({Z}_{n})_{{n}\in\hspace{0.3pt}\omega},$ $(\psi_{n})_{{n}\in\hspace{0.3pt}\omega},$
and $({M}_{n})_{{n}\in\{\hspace{-0.8pt}{-}\hspace{-0.3pt}1\hspace{-0.5pt}\}\cup\hspace{1pt}\omega}$ by recursion on ${n}$ as follows:

   \begin{itemize}
   \item [\textup{(f1)}\ ]
      $\mathsurround=0pt
      {M}_{{-}1}\coloneq\{{0}_{\mathbf{S}}\};$
   \item [\textup{(f2)}\ ]
      $\mathsurround=0pt
      {Z}_{n}\coloneq\{{x}\in{M}_{{n}-1}:{U}_{n}\cap\mathbf{S}_{x}\neq\mathbf{S}_{x}\};$
   \item [\textup{(f3)}\ ]
      $\mathsurround=0pt
      \psi_{n}\coloneq\big\{\mathbf{G}({x},{n}):{x}\in{Z}_{n}\big\};$
   \item [\textup{(f4)}\ ]
      $\mathsurround=0pt
      {M}_{n}\coloneq({M}_{{n}-1}{\setminus}\,{Z}_{n})
      \,\cup\,\bigcup_{\mathbf{G}\in\psi_{n}}\!\maxel\mathbf{G}.$
   \end{itemize}

For each ${n}\in\omega,$ we will prove the following:

   \begin{itemize}
   \item [\textup{(g1)}\ ]
      $\mathsurround=0pt
      {Z}_{n}=\{{0}_\mathbf{G}:\mathbf{G}\in\psi_{n}\};$
   \item [\textup{(g2)}\ ]
      $\mathsurround=0pt
      {M}_{n}$ is an antichain in $\mathbf{S};$
   \item [\textup{(g3)}\ ]
      $\mathsurround=0pt
      ({M}_{n}){\hspace{-1pt}\downfootline}_\mathbf{S}\cap\bigcup_{{i}\leqslant{n}}{Z}_{i}=\varnothing;$
   \item [\textup{(g4)}\ ]
      $\mathsurround=0pt
      \bigcup_{{i}\leqslant\hspace{0.3pt}{n}}\psi_{i}\;$ is a consistent family of foliage grafts for $\mathbf{S};$
   \item [\textup{(g5)}\ ]
      $\mathsurround=0pt
      \bigcup_{{y}\in{M}_{n}}\mathbf{S}_{y}=\bigcap_{{i}\leqslant\hspace{0.3pt}{n}}{U}_{i};$
   \item [\textup{(g6)}\ ]
      $\mathsurround=0pt
      \bigcup\{\cut(\mathbf{S},\mathbf{G}):\mathbf{G}\in\bigcup_{{i}\leqslant\hspace{0.3pt}{n}}\psi_{i}\}=
      {}^\omega\hspace{-1pt}\omega\setminus\bigcap_{{i}\leqslant\hspace{0.3pt}{n}}{U}_{i}.$
   \end{itemize}

Let us first show that (g1)--(g6) yield the conclusion of the theorem. Put $\varphi\coloneq\bigcup_{{n}\in\hspace{0.3pt}\omega}\psi_{n},$ so (g4) implies that $\varphi$ is a consistent family of foliage grafts for $\mathbf{S}.$ Then $\mathbf{H}\coloneq\fhybr(\mathbf{S},\varphi)$ satisfies the requirements of the theorem.
Indeed, (g6) imply that $\loss(\mathbf{S},\varphi)=
{}^\omega\hspace{-1pt}\omega\setminus\bigcap_{{n}\in\hspace{0.3pt}\omega}{U}_{n},$ so
it follows from Lemma~\ref{lem.when.f.hybr.is.pi.tree}, (a) of Lemma~\ref{lem.pi.and.B.f.trees.vs.S}, and (a2)--(a6) that $\mathbf{H}$ is a Baire foliage tree on  ${Y}.$ Then $\mathbf{H}$ has nonempty leaves, therefore $\mathbf{H}$ shoots into $\mathbf{S}$ by (a8) and Lemma~\ref{lem.when.f.hyb.sprts.thrgh}.

It remains to prove that (g1)--(g6) hold for all ${n}\in\omega.$
Condition (g1) easily follows from definitions of ${Z}_{n}$ and $\psi_{n}:$
if ${x}\in{Z}_{n},$ then ${x}={0}_{\mathbf{G}({x},n)}$ by (a1),
so ${Z}_{n}=\{{0}_{\mathbf{G}({x},n)}:{x}\in{Z}_{n}\}=
\{{0}_\mathbf{G}:\mathbf{G}\in\psi_{n}\}.$
Conditions (g2)--(g6) will be proved by induction.
Using (a1)--(a11), and (\hspace{-0.4pt}d)--(e) of Definition~\ref{def.graft},
it is not hard to show that (g2)--(g6) are satisfied when ${n}={0}.$
Assume as induction hypothesis that (g2)--(g6) hold for all ${n}\leqslant{k}.$
We must prove that (g2)--(g6) hold for ${n}={k}\,{+}\,1.$

\medskip
(g2) We prove that ${M}_{{k}+1}$ is an antichain in $\mathbf{S}.$
Suppose ${v}\neq{w}\in{M}_{{k}+1}.$ We consider several cases:
   \begin{itemize}
   \item[\textup{(i)\ }]
      $\mathsurround=0pt{v},{w}\in{M}_{{k}}.$

      Then ${v}\parallel_\mathbf{S}{w}$ by the induction hypothesis.
   \item[\textup{(ii)\ }]
      $\mathsurround=0pt{v},{w}\in{M}_{{k}+1}\,{\setminus}\,{M}_{{k}}.$

      It follows by (f4) that there are $\mathbf{D},\mathbf{E}\in\psi_{{k}+1}$
      such that ${v}\in\maxel\mathbf{D}$ and ${w}\in\maxel\mathbf{E}.$
      \begin{itemize}
      \item[\textup{(ii.1)\ }]
         $\mathsurround=0pt{0}_\mathbf{D}\neq{0}_\mathbf{E}.$

         We have ${0}_\mathbf{D},{0}_\mathbf{E}\in{Z}_{{k}+1}\subseteq{M}_{k}$ by (g1) and (f2),
         so ${0}_\mathbf{D}\parallel_\mathbf{S}{0}_\mathbf{E}$ by the induction hypothesis.
         Then ${v}\parallel_\mathbf{S}{w}$ by using (\hspace{0.3pt}b\hspace{-0.5pt}) of Lemma~\ref{about trees} twice.
      \item[\textup{(ii.2)\ }]
         $\mathsurround=0pt{0}_\mathbf{D}={0}_\mathbf{E}.$

         It follows from (f3) and (a1) that
         $\mathbf{D}=\mathbf{G}({0}_\mathbf{D},{k}\,{+}\,1\hspace{-1pt})=
         \mathbf{G}({0}_\mathbf{E},{k}\,{+}\,1\hspace{-1pt})=\mathbf{E},$
         so we have ${v},{w}\in\maxel\mathbf{D}.$
         Consequently ${v}\parallel_\mathbf{S}{w}$ by (a7) and (e) of Definition~\ref{def.graft}.
      \end{itemize}
   \item[\textup{(iii)\ }]
      $\mathsurround=0pt\big|\hspace{1pt}\{{v},{w}\}\cap{M}_{{k}}\hspace{1pt}\big|=1.$

      We may assume without lost of generality that ${v}\in{M}_{{k}+1}\,{\setminus}\,{M}_{{k}}$ and
      ${w}\in{M}_{{k}+1}\cap{M}_{{k}}.$ Again, as in (ii), there is $\mathbf{D}\in\psi_{{k}+1}$
      such that ${v}\in\maxel\mathbf{D},$ and then ${v}>_\mathbf{S}{0}_\mathbf{D}$
      and ${0}_\mathbf{D}\in{Z}_{{k}+1}\subseteq{M}_{k}.$
      \begin{itemize}
      \item[\textup{(iii.1)\ }]
         $\mathsurround=0pt{0}_\mathbf{D}\neq{w}.$

         We have ${v}>_\mathbf{S}{0}_\mathbf{D}$
         and ${0}_\mathbf{D}\parallel_\mathbf{S}{w}$ the induction hypothesis
         (because $\hspace{1pt}\mathsurround=0pt{0}_\mathbf{D},{w}\in{M}_{k}$),
         so ${v}\parallel_\mathbf{S}{w}$ by (\hspace{0.3pt}b\hspace{-0.5pt}) of Lemma~\ref{about trees}.
      \item[\textup{(iii.2)\ }]
         $\mathsurround=0pt{0}_\mathbf{D}={w}.$

         We have ${w}\in{M}_{{k}+1}$  and ${w}={0}_\mathbf{D}\in{Z}_{{k}+1},$
         so it follows from (f4) that there is $\mathbf{F}\in\psi_{{k}+1}$
         such that ${w}\in\maxel\mathbf{F}.$
         Consequently, ${w}>_\mathbf{S}{0}_\mathbf{F}$ and ${0}_\mathbf{F}\in{Z}_{{k}+1}.$
         This contradicts the induction hypothesis because
         ${w},{0}_\mathbf{F}\in{Z}_{{k}+1}\subseteq{M}_{k}.$
      \end{itemize}
   \end{itemize}

\medskip
(g3) We must prove that
$({M}_{{k}+1}){\hspace{-1pt}\downfootline}_\mathbf{S}
\cap\hspace{1pt}\bigcup_{{i}\leqslant{{k}+1}}\hspace{-1pt}{Z}_{i}=\varnothing.$
Suppose on the contrary that there is some ${x}\in({M}_{{k}+1}){\hspace{-1pt}\downfootline}_\mathbf{S}
\cap\hspace{1pt}\bigcup_{{i}\leqslant{{k}+1}}\hspace{-1pt}{Z}_{i}.$
Since by (g2) with ${n}={k}\,{+}\,1$ (which is already proved),
${M}_{{k}+1}$ is an antichain in $\mathbf{S},$ then we may consider
$${r}\,\coloneq\,\roott_\mathbf{S}({x},{M}_{{k}+1})\,\in\,{M}_{{k}+1}\,=\,
({M}_{k}{\setminus}\,{Z}_{{k}+1})\,\cup\!\!\!
\bigcup_{\mathbf{G}\in\psi_{{k}+1}\!\!}\!\!\!\!\maxel\mathbf{G}.\hspace{-2ex}
$$
   \begin{itemize}
   \item[\textup{(i)\ }]
      $\mathsurround=0pt\exists\hspace{0.5pt}\mathbf{G}\,{\in}\,\psi_{{k}+1}
      \,[\,{r}\in\maxel\mathbf{G}\,].$

      Then ${x}\geqslant_\mathbf{S}{r}>_\mathbf{S}{0}_\mathbf{G}\in{Z}_{{k}+1}\subseteq{M}_{k},$
      therefore ${x}\in({M}_{k}){\hspace{-1pt}\downfootline}_\mathbf{S},$ so
      ${x}\notin\bigcup_{{i}\leqslant{k}}{Z}_{i}$ by (g3) with ${n}={k},$ and
      hence ${x}\in{Z}_{{k}+1}\subseteq{M}_{k}.$
      Now we have ${x}>_\mathbf{S}{0}_\mathbf{G}$ and ${x},{0}_\mathbf{G}\in{M}_{k},$
      which contradicts (g2) with ${n}={k}.$
   \item[\textup{(ii)\ }]
      $\mathsurround=0pt{r}\in{M}_{k}\setminus{Z}_{{k}+1}.$

      Then we have ${x}\geqslant_\mathbf{S}{r}\in{M}_{k},$ therefore as in (i) we get
      ${x}\in({M}_{k}){\hspace{-1pt}\downfootline}_\mathbf{S},$
      ${x}\notin\bigcup_{{i}\leqslant{k}}{Z}_{i},$ and ${x}\in{Z}_{{k}+1}\subseteq{M}_{k}.$
      Also we have ${x}\neq{r}$ because ${r}\notin{Z}_{{k}+1},$
      consequently ${x}>_\mathbf{S}{r}$ and ${x},{r}\in{M}_{k},$
      which again contradicts (g2) with ${n}={k}.$
   \end{itemize}

\medskip
(g4) We must prove that
$\bigcup_{{i}\leqslant\hspace{0.3pt}{k}+1}\psi_{i}$ is a consistent family of foliage grafts for $\mathbf{S}.$
Every $\mathbf{G}\in\bigcup_{{i}\leqslant\hspace{0.3pt}{k}+1}\psi_{i}$ is a
foliage graft for $\mathbf{S}$ by (a7).
Suppose $\mathbf{D}\neq\mathbf{E}\in\bigcup_{{i}\leqslant\hspace{0.3pt}{k}+1}\psi_{i}.$
We may assume that $\impl\mathbf{D}\cap\impl\mathbf{E}=\varnothing$ by construction,
and then $\skeleton\mathbf{D}\neq\skeleton\mathbf{E}$ because implants of
$\mathbf{D}$ and $\mathbf{E}$ are nonempty by (a9).
It remains to check clause (c) of Definition~\ref{def.c.f.grafts}.
We consider several cases:

   \begin{itemize}
   \item[\textup{(i)\ }]
      $\mathsurround=0pt\mathbf{D},\mathbf{E}\in\bigcup_{{i}\leqslant\hspace{0.3pt}{k}}\psi_{i}.$

      Then (c) of Definition~\ref{def.c.f.grafts} is satisfied by the induction hypothesis.
   \item[\textup{(ii)\ }]
      $\mathsurround=0pt\mathbf{D},\mathbf{E}\in\psi_{{k}+1}.$

      Then by (f3) $\mathbf{D}=\mathbf{G}({x},{k}\,{+}\,1\hspace{-1pt})$ and $\mathbf{E}=\mathbf{G}({y},{k}\,{+}\,1\hspace{-1pt})$
      for some ${x}\neq{y}\in{Z}_{{k}+1},$
      so it follows by using (f2), (g1), and (a1) that
      $${M}_{k}\,\supseteq\,{Z}_{{k}+1}\,\ni\,{0}_\mathbf{D}\,=\,
      {0}_{\mathbf{G}({x},{k}+1\hspace{-1pt})}\,=\,{x}\,\neq\,{y}\,=\,{0}_{\mathbf{G}({y},{k}+1)}\,=\,
      {0}_\mathbf{E}\,\in\,{Z}_{{k}+1}\,\subseteq\,{M}_{k}.
      $$
      Consequently, ${0}_\mathbf{D}\parallel_\mathbf{S}{0}_\mathbf{E}$ by (g2) with ${n}={k}.$
   \item[\textup{(iii)\ }]
      $\mathsurround=0pt\big|\hspace{1pt}\{\mathbf{D},\mathbf{E}\}\cap\psi_{{k}+1}\hspace{1pt}\big|=1.$

      Suppose without lost of generality that $\mathbf{D}\in\bigcup_{{i}\leqslant\hspace{0.3pt}{k}}\psi_{i}$
      and $\mathbf{E}\in\psi_{{k}+1}.$
      Then by (g1) ${0}_\mathbf{D}\in\bigcup_{{i}\leqslant{k}}{Z}_{i}$
      and ${0}_\mathbf{E}\in{Z}_{{k}+1}\subseteq{M}_{k}\subseteq
      ({M}_{k}){\hspace{-1pt}\downfootline}_\mathbf{S},$
      so it follows by using (g3) with ${n}={k}$ that ${0}_\mathbf{D}\neq{0}_\mathbf{E}.$
      If ${0}_\mathbf{D}\parallel_\mathbf{S}{0}_\mathbf{E},$ then clause (c) of Definition~\ref{def.c.f.grafts} holds. It remains to consider the following two cases:
      \begin{itemize}
      \item[\textup{(iii.1)\ }]
         $\mathsurround=0pt{0}_\mathbf{D}>_\mathbf{S}{0}_\mathbf{E}.$

         We have ${0}_\mathbf{D}\in\bigcup_{{i}\leqslant\hspace{0.3pt}{k}}{Z}_{i}$ and ${0}_\mathbf{D}>_\mathbf{S}{0}_\mathbf{E}\in{Z}_{{k}+1}\subseteq{M}_{k},$
         so ${0}_\mathbf{D}\in({M}_{k}){\downfootline_\mathbf{S}}.$
         This contradicts (g3) with ${n}={k}.$
      \item[\textup{(iii.2)\ }]
         $\mathsurround=0pt{0}_\mathbf{E}>_\mathbf{S}{0}_\mathbf{D}.$

         Now, $\mathbf{D}\in\bigcup_{{i}\leqslant\hspace{0.3pt}{k}}\psi_{i}$
         and $\bigcup_{{i}\leqslant\hspace{0.3pt}{k}}\psi_{i}$ is a consistent family of foliage grafts for $\mathbf{S}$ by the induction hypothesis.
         Further, ${0}_\mathbf{E}\in{Z}_{{k}+1}\subseteq{M}_{k}$
         and it is not hard to show that
         $$\textstyle{M}_{k}\subseteq\{{0}_\mathbf{S}\}\cup
         \bigcup\{\maxel\mathbf{G}:\mathbf{G}\in\bigcup_{{i}\leqslant\hspace{0.3pt}{k}}\psi_{i}\}
         $$
         by induction on ${k}.$
         Then it follows from (\hspace{0.3pt}b\hspace{-0.5pt}) of Lemma~\ref{lem.graft} that  ${0}_\mathbf{E}\in
         \supp(\mathbf{S},\bigcup_{{i}\leqslant\hspace{0.3pt}{k}}\psi_{i}\hspace{-0.5pt}).$
         Furthermore, (\hspace{-0.4pt}d) of Lemma~\ref{lem.graft} says that
         ${0}_\mathbf{E}\in
         \supp(\mathbf{S},\bigcup_{{i}\leqslant\hspace{0.3pt}{k}}\psi_{i}\hspace{-0.5pt})$
         plus ${0}_\mathbf{E}>_\mathbf{S}{0}_\mathbf{D}$
         imply ${0}_\mathbf{E}\in(\maxel\mathbf{D}){\downfootline_\mathbf{S}},$
         so (c) of Definition~\ref{def.c.f.grafts} holds.
      \end{itemize}
   \end{itemize}

\medskip
(g5) We must prove that
$\bigcup_{{y}\in{M}_{{k}+1}}\!\mathbf{S}_{y}=\bigcap_{{i}\leqslant\hspace{0.3pt}{k}+1}{U}_{i}.$
Put ${B}\coloneq\bigcup_{{y}\in{M}_{{k}}\hspace{-0.5pt}{\setminus}{Z}_{{k}+1}}\mathbf{S}_{y}.$
Then (f2) implies
\begin{equation}\label{*g4}
\textstyle{B}=\bigcup\{\hspace{1pt}{U}_{{k}+1}\cap\mathbf{S}_{y}:
{y}\in{M}_{{k}}{\setminus}\,{Z}_{{k}+1}\hspace{1pt}\}.
\end{equation}
Now, using (f4), (f3), (a11), \eqref{*g4}, and (g5) with ${n}={k},$ we have
$$\bigcup_{{y}\in{M}_{{k}+1}\!}\!\!\!\!\mathbf{S}_{y}\:=\:
{B}\,\cup\:
\bigcup\!\big\{\mathbf{S}_{y}:{y}\in
{\textstyle\bigcup_{\mathbf{G}\in\psi_{{k}+1}}\!\maxel\mathbf{G}}\big\}\:=\:
{B}\,\cup\:
\bigcup\!\big\{\mathbf{S}_{y}:
{\textstyle{y}\in\bigcup_{{x}\in{Z}_{{k}+1}}\!\maxel\hspace{1pt}\mathbf{G}({x},{k}\,{+}\,1\hspace{-1pt})}\big\}
\:=\vspace{-2ex}
$$
$${B}\:\cup\!\!
\bigcup_{{x}\in{Z}_{{k}+1}\!}\!\!\!\!\Big(\!\bigcup\!\big\{\mathbf{S}_{y}:
{y}\in\maxel\hspace{1pt}\mathbf{G}({x},{k}\,{+}\,1\hspace{-1pt})\big\}\!\Big)\:=\:
{B}\:\cup\!\!
\bigcup_{{x}\in{Z}_{{k}+1}\!\!}\!\!\!({U}_{{k}+1}\hspace{-1pt}\cap\mathbf{S}_{x}\hspace{-0.5pt})\:=
$$
$$\bigcup_{{x}\in{M}_{{k}}}\!\!({U}_{{k}+1}\hspace{-1pt}\cap\mathbf{S}_{x})\:=\:
{U}_{{k}+1}\:\cap\bigcup_{{x}\in{M}_{{k}}\!\!}\!\!\mathbf{S}_{x}\:=\:
{U}_{{k}+1}\:\cap\:\bigcap_{{i}\leqslant\hspace{0.3pt}{{k}}}{U}_{i}\:=
\bigcap_{{i}\leqslant\hspace{0.3pt}{k}+1\!}\!\!{U}_{i}.
$$

\medskip
(g6) We must prove that
$\bigcup\{\cut(\mathbf{S},\mathbf{G})\:{:}\:\mathbf{G}\in\bigcup_{{i}\leqslant\hspace{0.3pt}{k}+1}\psi_{i}\}=
{}^\omega\hspace{-1pt}\omega\setminus\bigcap_{{i}\leqslant\hspace{0.3pt}{k}+1}{U}_{i}.$
Put ${A}\coloneq\bigcap_{{i}\leqslant\hspace{0.3pt}{k}}{U}_{i},$ so that
the induction hypothesis asserts \vspace{-1ex}
\begin{equation}\label{*g5}
\textstyle\bigcup\{\cut(\mathbf{S},\mathbf{G})\:{:}\:
\mathbf{G}\in\bigcup_{{i}\leqslant\hspace{0.3pt}{{k}}}\psi_{i}\}=
{}^\omega\hspace{-1pt}\omega\setminus{A}.
\end{equation}
Then using \eqref{*g5}, (f3), (a10), (f2), and (g5) with ${n}={k},$ we have
$${\textstyle\bigcup\{\cut(\mathbf{S},\mathbf{G})\:{:}\:
\mathbf{G}\in\bigcup_{{i}\leqslant\hspace{0.3pt}{k}+1}\psi_{i}\}}\:=\:
({}^\omega\hspace{-1pt}\omega\,{\setminus}\,{A})
\;\cup\!\!
\bigcup_{\mathbf{G}\in\hspace{0.5pt}\psi_{{k}+1}\!\!\!}\!\!\!\!\cut(\mathbf{S},\mathbf{G})\:=\:
({}^\omega\hspace{-1pt}\omega\,{\setminus}\,{A})
\;\cup\!
\bigcup_{{x}\in{Z}_{{k}+1}\!\!\!}\!\!\!\cut\hspace{-1pt}
\big(\mathbf{S},\mathbf{G}({x},{k}\,{+}\,1\hspace{-1pt})\big)
\;\cup\;\varnothing\:=\vspace{-1ex}
$$
$$({}^\omega\hspace{-1pt}\omega\,{\setminus}\,{A})
\;\cup\!\!
\bigcup_{{x}\in{Z}_{{k}+1}\!}\!\!\!\!\big(\mathbf{S}_{x}\!\setminus
({U}_{{k}+1}\hspace{-1pt}\cap\mathbf{S}_{x}\hspace{-0.5pt})\big)
\;\cup\!\!\!
\!\!\bigcup_{{x}\in{M}_{{k}}\!{\setminus}{Z}_{{k}+1}\!\!\!\!\!\!}\!\!\!\!\!\!\varnothing
\:=\:
({}^\omega\hspace{-1pt}\omega\,{\setminus}\,{A})
\;\cup\!\!
\bigcup_{{x}\in{Z}_{{k}+1}\!}\!\!\!(\mathbf{S}_{x}{\setminus}\,{U}_{{k}+1})
\;\cup
\!\!\!\!\!\bigcup_{{x}\in{M}_{{k}}\!{\setminus}{Z}_{{k}+1}\!\!\!\!\!\!}
\!\!\!\!\!\!(\mathbf{S}_{x}{\setminus}\,{U}_{{k}+1})
\:=$$
$$({}^\omega\hspace{-1pt}\omega\,{\setminus}\,{A})
\:\cup
\bigcup_{{x}\in{M}_{{k}}\!}\!(\mathbf{S}_{x}{\setminus}\,{U}_{{k}+1})\:=\:
({}^\omega\hspace{-1pt}\omega\,{\setminus}\,{A})
\:\cup\,
\big((\!\bigcup_{{x}\in{M}_{{k}}\!}\!\!\hspace{-1pt}\mathbf{S}_{x})\setminus{U}_{{k}+1}\big)\:=
$$
$$\textstyle({}^\omega\hspace{-1pt}\omega\,{\setminus}\,{A})
\:\cup\:
({A}\setminus{U}_{{k}+1})\:=\:
\textstyle{}^\omega\hspace{-1pt}\omega\,{\setminus}\,({A}\cap\hspace{0.5pt}{U}_{{k}+1})\:=\:
{}^\omega\hspace{-1pt}\omega\setminus\bigcap_{{i}\leqslant\hspace{0.3pt}{k}+1}{U}_{i}.$$
\end{proof}

\section{Main results}\label{sect.LPB.theorems}

In this section we prove theorems that allow to construct $\hspace{1pt}\mathsurround=0pt\pi$-trees for subspaces of a space that already has a $\hspace{1pt}\mathsurround=0pt\pi$-tree. Recall that $\mathbf{S}$ is the standard foliage tree of ${}^\omega\hspace{-1pt}\omega,$ see Notation~\ref{def.stdrt.f.tr.of.B.sp}.

 \begin{teo}\label{corollary.1}
    Suppose that $\mathbf{S}$ is a $\hspace{1pt}\mathsurround=0pt\pi$-tree on a space $({}^\omega\hspace{-1pt}\omega,\tau).$
    Let $\hspace{2pt}{Y}=\bigcap_{{n}\in\hspace{0.3pt}\omega}{U}_{n},$ where each $\hspace{1pt}{U}_{n}$ is an open $\hspace{1pt}\mathsurround=0pt\pi$-dense\footnote{see Definition~\ref{not.pi.dense}} subset of the Baire space $({}^\omega\hspace{-1pt}\omega,\tau_{\hspace{-1pt}\scriptscriptstyle\mathcal{N}}).$
    Then $\hspace{1pt}{Y}\hspace{-2pt}$ as a subspace of $({}^\omega\hspace{-1pt}\omega,\tau)$ has a $\hspace{1pt}\mathsurround=0pt\pi$-tree.
 \end{teo}

Using Lemma~\ref{lem.pi.and.B.f.trees.vs.S}, we can apply this theorem not only to a space of the form $({}^\omega\hspace{-1pt}\omega,\tau),$ but to an arbitrary space with a  $\hspace{1pt}\mathsurround=0pt\pi$-tree.

\begin{que} Does Theorem~\ref{corollary.1} remains true if we replace ``$\mathsurround=0pt\pi$-dense'' by ``dense''\,?
\end{que}

\begin{proof}[\textbf{\textup{Proof of \hspace{1pt}Theorem~\ref{corollary.1}}}]
Let $\rho(\hspace{-1pt}\tau\hspace{-1pt})$ and $\rho(\hspace{-1pt}\tau_{\hspace{-1pt}\scriptscriptstyle\mathcal{N}}\hspace{-1pt})$ be topologies on~${Y}\hspace{-1pt}$ inherited from $\tau$ and $\tau_{\hspace{-1pt}\scriptscriptstyle\mathcal{N}}$ respectively.
Theorem~\ref{main.theorem} asserts that there is a Baire foliage tree $\mathbf{H}$ on $\big({Y},\rho(\hspace{-1pt}\tau_{\hspace{-1pt}\scriptscriptstyle\mathcal{N}}\hspace{-1pt})\big)$
that shoots into $\mathbf{S}.$
Then $\mathbf{H}$ is a Baire foliage tree on $\big({Y},\rho(\hspace{-1pt}\tau\hspace{-1pt})\big)$ because
$\rho(\hspace{-1pt}\tau\hspace{-1pt})\supseteq
\rho(\hspace{-1pt}\tau_{\hspace{-1pt}\scriptscriptstyle\mathcal{N}}\hspace{-1pt})$ by (b\hspace{-0.5pt}) Lemma~\ref{lem.pi.and.B.f.trees.vs.S},
and $\mathbf{H}$ grows into $\big({Y},\rho(\hspace{-1pt}\tau\hspace{-1pt})\big)$ by Lemma~\ref{l.grows.into.subspace}
because $\flesh\mathbf{H}={Y}.$
Therefore $\mathbf{H}$ is a $\hspace{1pt}\mathsurround=0pt\pi$-tree on $\big({Y},\rho(\hspace{-1pt}\tau\hspace{-1pt})\big).$
\end{proof}

\begin{rem} The construction of a $\hspace{1pt}\mathsurround=0pt\pi$-tree in the proof of \hspace{1pt}Theorem~\ref{corollary.1} does not depend on topology~$\tau.$
\end{rem}

\begin{teo}\label{corollary.2}
   Suppose that a space ${X}$ has a $\hspace{1pt}\mathsurround=0pt\pi$-tree
   and $\hspace{1pt}{Y}={X}\,{\setminus}\,{F},$
   where ${F}$ is a $\hspace{1pt}\mathsurround=0pt\sigma$-compact subset of ${X}.$
   Then ${Y}\hspace{-2pt}$ also has a $\hspace{1pt}\mathsurround=0pt\pi$-tree.
\end{teo}

\begin{sle}\label{corollary.3}
   Suppose that a space ${X}$ has a $\hspace{1pt}\mathsurround=0pt\pi$-tree
   and $\hspace{1pt}{Y}={X}\setminus{C},$
   where ${C}$ is at most countable.
   Then ${Y}\hspace{-2pt}$ also has a $\hspace{1pt}\mathsurround=0pt\pi$-tree.
   \hfill$\Box$
\end{sle}

\begin{proof}[\textbf{\textup{Proof of \hspace{1pt}Theorem~\ref{corollary.2}}}]
Let ${F}=\bigcup_{{n}\in\hspace{0.3pt}\omega}{K}_{n},$ where each ${K}_{n}$ is a compact subset of ${X}.$
By (d) of Lemma~\ref{lem.pi.and.B.f.trees.vs.S}, there is a homeomorphism ${f}$ from
${X}$ onto a space $({}^\omega\hspace{-1pt}\omega,\tau)$ such that $\mathbf{S}$ is a $\hspace{1pt}\mathsurround=0pt\pi$-tree on $({}^\omega\hspace{-1pt}\omega,\tau).$ Also
it follows from (\hspace{0.3pt}b\hspace{-0.5pt}) of Lemma~\ref{lem.pi.and.B.f.trees.vs.S} that each ${f}({K}_{n})$ is a compact subset of the Baire space $({}^\omega\hspace{-1pt}\omega,\tau_{\hspace{-1pt}\scriptscriptstyle\mathcal{N}}).$ Then every ${U}_{n}\coloneq{}^\omega\hspace{-1pt}\omega\setminus{f}({K}_{n})$
is an open $\hspace{1pt}\mathsurround=0pt\pi$-dense subset of the Baire space by Remark~\ref{rem.pi.dense}, so the subspace ${}^\omega\hspace{-1pt}\omega\setminus\bigcup_{{n}\in\hspace{0.3pt}\omega}{f}({K}_{n})
=\bigcap_{{n}\in\hspace{0.3pt}\omega}{U}_{n}$ of $({}^\omega\hspace{-1pt}\omega,\tau)$ has a $\hspace{1pt}\mathsurround=0pt\pi$-tree by Theorem~\ref{corollary.1}.
\end{proof}



\begin{thebibliography}{99}
\bibitem{MPatr} М.\,Patrakeev, Metrizable images of the Sorgenfrey line, \emph{Topology Proceedings} vol.45 (2015) 253-269.

\bibitem{kech} A.\,S.\,Kechris, \emph{Classical descriptive set theory}, Graduate Texts in
Mathematics, vol.\,156, Springer, 1994.

\bibitem{kun} K.\,Kunen, \emph{Set Theory}, North-Holland, 1980.

\bibitem{jech} T.\,Jech, \emph{Set Theory}, $2^{\text{nd}}$ Edition, Springer, 1996.

\bibitem{top.enc} K.\,P.\,Hart, J.\,Nagata, and J.\,E.\,Vaughan, eds., \emph{Encyclopedia of general topology}, Elsevier, Amsterdam, 2004.

\bibitem{patr.slides} M.\,Patrakeev, \emph{The foliage hybrid operation}, slides from the talk at Alexandroff Readings, May 22--26, 2016, Moscow, Russia; available at \verb"https://arxiv.org/src/1512.02458v5/anc"\,.

\end{thebibliography}
\end{document}